\numberwithin{equation}{section}
\newtheorem{theorem}{Theorem}[section]
\newtheorem{proposition}[theorem]{Proposition}
\newtheorem{corollary}[theorem]{Corollary}
\newtheorem{lemma}[theorem]{Lemma}
\newtheorem{problem}[theorem]{Problem}
\newtheorem*{strongc}{Strong Conjecture}
\newtheorem*{intermediatec}{Intermediate Conjecture}
\newtheorem*{weakc}{Weak Conjecture}
\newtheorem{defn}[theorem]{Definition}
\theoremstyle{definition}
\newtheorem{example}[theorem]{Example}
\newcommand{\Cat}{{{\sf Cat}}}
\newcommand{\Park}{{{\sf Park}}}
\newcommand{\mult}{{\mathrm{mult}}}
\newcommand{\Shi}{{{\sf Shi}}}
\newcommand{\Cox}{{{\sf Cox}}}
\newcommand{\leftexp}[2]{{\vphantom{#2}}^{#1}{#2}}
\newcommand{\symm}{{\mathfrak{S}}}
\newcommand{\CC}{{\mathbb {C}}}
\newcommand{\ZZ}{{\mathbb {Z}}}
\newcommand{\RR}{{\mathbb {R}}}
\newcommand{\AAA}{{\mathbb {A}}}
\newcommand{\DD}{{\mathbb {D}}}
\newcommand{\xx}{{\mathbf{x}}}
\newcommand{\LLL}{{\mathcal{L}}}
\begin{document}

\date{September 2012}

\title[Parking structures: Fuss analogs]
{Parking structures: Fuss analogs}

\author{Brendon Rhoades}
\address
{Deptartment of Mathematics \newline \indent
University of California, San Diego \newline \indent
La Jolla, CA, 92093-0112, USA}
\email{bprhoades@math.ucsd.edu}

\thanks{The author is partially supported by NSF grant DMS - 1068861.}


\keywords{parking function, Fuss analog, noncrossing, nonnesting, reflection
group, cyclic sieving, absolute order}

\begin{abstract}
For any irreducible real reflection group $W$ with Coxeter number 
$h$, Armstrong, Reiner, and the author introduced a pair of 
$W \times \ZZ_h$-modules which deserve to be called {\sf $W$-parking spaces} 
which generalize the type A notion of parking functions and conjectured a 
relationship between them.
In this paper we give a 
Fuss analog of their constructions.  

For a Fuss parameter $k \geq 1$, we  define a pair
of $W \times \ZZ_{kh}$-modules which deserve to be 
called {\sf $k$-$W$-parking spaces}
and conjecture a relationship between them.
 We prove 
the weakest version of our conjectures for each of the infinite families ABCDI
of finite reflection groups, together with proofs of stronger versions in special cases.
Whenever our weakest conjecture holds for $W$, we have the following
corollaries.
\begin{itemize}
\item There is a simple formula for the character of 
either $k$-$W$-parking space.
\item We recover a cyclic sieving result due to Krattenthaler 
and M\"uller
which gives the cycle structure of a generalized
rotation action on $k$-$W$-noncrossing partitions.
\item When $W$ is crystallographic, the restriction of either $k$-$W$-parking
space to $W$ 
isomorphic to the action of 
$W$ on the finite torus $Q / (kh+1)Q$, where $Q$ is the root lattice.
\end{itemize}
\end{abstract}
\maketitle

\section{Introduction}
\label{Introduction}

The set $\Park_n$ of 
parking functions 
of size $n$ plays a major role in Catalan combinatorics
with its connections to the Shi hyperplane arrangement and diagonal 
coinvariant modules.
In \cite{ARR} Armstrong, Reiner, and 
the author define two new generalizations 
$\Park^{NC}_W$ and $\Park^{alg}_W$
of $\Park_n$
to any (finite, real, irreducible) reflection group $W$.  The generalization 
$\Park^{NC}_W$ is defined combinatorially using
$W$-noncrossing partitions and the generalization $\Park^{alg}_W$ is defined 
algebraically using homogeneous systems of parameters.  When $W$ is a Weyl group with root system $\Phi$, a third generalization 
$\Park^{NN}_{\Phi}$ is defined combinatorially in \cite{ARR} using 
$\Phi$-nonnesting partitions.

Let $k \in \ZZ_{> 0}$ be a Fuss parameter.  The parking functions $\Park_n$ have a natural
Fuss generalization $\Park_n(k)$.
In this paper, we `complete the diamonds' and present a natural Fuss generalization
of the constructions in \cite{ARR}.  

\begin{equation*}
\begin{matrix}
&& \Park^{NC}_W(k) && \\
& \diagup && \diagdown & \\
\Park^{NC}_W &&&& \Park_n(k) \\
& \diagdown && \diagup & \\
&& \Park_n && 
\end{matrix} \hspace{1in}
\begin{matrix}
&& \Park^{alg}_W(k) && \\
& \diagup && \diagdown & \\
\Park^{alg}_W &&&& \Park_n(k) \\
& \diagdown && \diagup & \\
&& \Park_n && 
\end{matrix} 
\end{equation*}
\begin{equation*}
\begin{matrix}
&& \Park^{NN}_{\Phi}(k) && \\
& \diagup && \diagdown & \\
\Park^{NN}_{\Phi} &&&& \Park_n(k) \\
& \diagdown && \diagup & \\
&& \Park_n && 
\end{matrix}
\end{equation*}

We begin by recalling the classical notion of a parking function.

\subsection{Classical parking functions} A sequence of positive integers 
$(a_1, \dots, a_n)$ is called a {\sf parking function of size $n$} 
 \footnote{The term `parking function' arises from the following situation.  Suppose
 that $n$ cars wish to park in a parking lot consisting of $n$ linearly ordered parking
 spaces.  For $1 \leq i \leq n$, car $i$ wants to park in the space $a_i$.  Cars $1, 2, \dots, n$
 (in that order) try to park in the lot.  
Car $i$ parks in the first available space $\geq i$, unless there are no such available spaces,
in which case car $i$ leaves the lot.
 The sequence $(a_1, \dots, a_n)$ is a parking function if and only if every car can 
 park in the lot.}
if its 
nondecreasing rearrangement $(b_1 \leq \dots \leq b_n)$ satisfies
$b_i \leq i$ for all $i$. 
We denote by $\Park_n$ the set of parking functions of size $n$.
The set $\Park_n$ is famously enumerated by $|\Park_n| = (n+1)^{n-1}$.
Table~\ref{park} shows that sets $\Park_n$ for $n = 1,2,3$.
 Parking functions were originally introduced by 
Konheim and Weiss \cite{KonheimWeiss} 
in computer science, but have since received a great deal of attention in 
algebraic combinatorics.

\begin{table}
\label{park}
\caption{$\Park_n$ for $n = 1,2,3$}

\begin{tabular} {| p{1cm} |  p{7cm} |}
\hline
$n$ & $\Park_n$ \\ \hline
$1$ & $1$ \\ \hline
$2$ & $11, 12, 21$ \\ \hline
$3$ & $111, 112, 121, 211, 113, 131, 311, 122, 212,$ \\ & $221, 123, 213, 132, 312, 231, 321$ \\ \hline
\end{tabular}
\end{table}

In this paper we will focus on a module structure carried by the set of parking functions.
The symmetric group $\symm_n$ acts on  $\Park_n$ by coordinate permutation,
viz. $w.(a_1, \dots, a_n) := (a_{w(1)}, \dots, a_{w(n)})$ for $w \in \symm_n$ and 
$(a_1, \dots, a_n) \in \Park_n$.
If 
$\chi: \symm_n \rightarrow \CC$ is the character of this 
representation, it can be shown that
\begin{equation}
\chi(w) = (n+1)^{r(w) - 1},
\end{equation}
where $r(w)$ denotes the number of cycles in the permutation $w \in \symm_n$.
Specializing to $w = 1$, we recover the fact that $| \Park_n | = (n+1)^{n-1}$.

Stanley \cite{StanleyParking} discovered a connection between the $\symm_n$-module
$\Park_n$ and noncrossing set partitions.  
Given a partition
$\lambda = (\lambda_1 \geq \dots \geq \lambda_r) \vdash n$, let $NC(\lambda)$ denote
the number of noncrossing partitions of $[n]$ with block sizes $\lambda_1, \dots, \lambda_r$.
Stanley proved that the $\symm_n$-module $\Park_n$ decomposes as a direct
sum of coset representations:
 \begin{equation}
 \label{iso}
 \Park_n \cong_{\symm_n} \bigoplus_{\lambda \vdash n} NC(\lambda) {\bf 1}_{\symm_{\lambda}}^{\symm_n},
 \end{equation}
where $\symm_{\lambda} = \symm_{\lambda_1} \times \cdots \times \symm_{\lambda_r}$ is the 
Young subgroup of $\symm_n$ corresponding to the partition 
$\lambda = (\lambda_1, \dots, \lambda_r)$.  
Kreweras \cite{Kreweras} proved 
a product formula for the multiplicities $NC(\lambda)$:
\begin{equation}
\label{krew}
NC(\lambda) = \frac{n!}{(n - \ell(\lambda) + 1)! m_1(\lambda)! \cdots m_n(\lambda)!},
\end{equation}
where $\ell(\lambda)$ is the number of parts of $\lambda$ and $m_i(\lambda)$ is the multiplicity
of $i$ as a part of $\lambda$.  

\subsection{Fuss analogs of classical parking functions}  The Fuss analog of a classical parking 
function depends on a {\sf Fuss parameter} $k \in \ZZ_{> 0}$.  
A {\sf $k$-parking function of size $n$} is a 
sequence $(a_1, \dots, a_n)$ of length $n$ whose nondecreasing rearrangement
$(b_1 \leq \dots \leq b_n)$ satisfies $b_i \leq (k-1)i + 1$ for all $i$.  The set of $k$-parking functions of size
$n$ is denoted $\Park_n(k)$ and carries an action of $\symm_n$ via coordinate permutation.  
We have
that $\Park_n(1) = \Park_n$.  Table~\ref{fpark} shows $\Park_n(k)$ for $k = 2$ and $n = 1,2,3$.
The set $\Park_n(k)$ labels 
 regions of extended Shi arrangement and is related to the representation
theory of modules which generalize diagonal harmonics.  It can be shown that
$|\Park_n(k)| = (kn+1)^{n-1}$ and, more generally, if $\chi : \symm_n \rightarrow \CC$ is the 
character of $\Park_n(k)$,
\begin{equation}
\label{acharacter}
\chi(w) = (kn+1)^{r(w) - 1},
\end{equation}
where $r(w)$ is the number of cycles of $w$.

Stanley \cite{StanleyParking} related the representation theory of $\Park_n(k)$ to $k$-divisible noncrossing
partitions of $[kn]$.  
Given a partition $\lambda = (\lambda_1, \dots, \lambda_r) \vdash n$, let
$NC^k(\lambda)$ be the number of noncrossing partitions of $[kn]$ with block sizes
$k \lambda_1, \dots, k \lambda_r$.
The module $\Park_n(k)$ is isomorphic to the following sum of coset representations:
\begin{equation}
\label{fussiso}
 \Park_n(k) \cong_{\symm_n} \bigoplus_{\lambda \vdash n} NC^k(\lambda) {\bf 1}_{\symm_{\lambda}}^{\symm_n}.
\end{equation} 
The Kreweras product formula (\ref{krew}) can be used to calculate the multiplicities 
$NC^k(\lambda)$.

Armstrong, Reiner, and the author generalized $\Park_n$ to arbitrary real reflection
groups \cite{ARR}.  In this paper we will generalize $\Park_n(k)$.  We begin 
by discussing a generalization of $\Park_n(k)$ to arbitrary crystallographic type.

\begin{table}
\label{fpark}
\caption{$\Park_n(k)$ for $k = 2$ and $n = 1,2,3$}
\begin{tabular} {| p{1cm} |  p{9cm} |}
\hline
$n$ & $\Park_n(2)$ \\ \hline
$1$ & $1$ \\ \hline
$2$ & $11, 12, 21, 13, 31$ \\ \hline
$3$ & $111, 112, 121, 211, 113, 131, 311, 122, 212, 221, 133, 313,$ \\
& $331, 123, 213, 132, 231, 312, 321, 124, 214, 142, 241, 412,$ \\
& $421, 125, 215, 152, 251, 512, 521, 134, 314, 143, 413, 341,$ \\
& $431, 135, 315, 153, 351, 513, 531$ \\ \hline
\end{tabular}
\end{table}

\subsection{Crystallographic generalizations of $k$-parking functions}  Let
\begin{equation*}
\Pi \subset \Phi^+ \subset \Phi \subset Q 
\end{equation*}
denote a crystallographic root system $\Phi$, a choice of simple system $\Pi \subset \Phi$, the corresponding
set $\Phi^+$ of positive roots, and the root lattice $Q = \ZZ[\Pi]$.  
Let $W = W(\Phi)$ be the Weyl group corresponding to $\Phi$ and let $h$ be the Coxeter 
number of $W$.  The group $W$ acts on the lattice $Q$ and on its dilation $(kh+1)Q$.  
Hence, the finite torus $Q / (kh+1)Q$ carries an action of $W$.

Haiman \cite{Haiman} used classical properties of reflection groups to prove 
that the character $\chi : W \rightarrow \CC$ of the permutation module 
$Q / (kh+1)Q$ is given by
\begin{equation}
\label{crystcharacter}
\chi(w) = (kh+1)^{\dim(V^w)},
\end{equation}
where $V$ is the reflection representation of $W$ and $V^w = \{ v \in V \,:\, w.v = v \}$ is the 
fixed space of $w$.  

In type A$_{n-1}$, we have that 
$V = \{ (x_1, \dots, x_n) \in \CC^n \,:\, x_1 + \cdots + x_n = 0 \}$, so that 
$V^w$ is the number of cycles of $w$ minus one for $w \in W = \symm_n$.  Therefore,
the formula (\ref{crystcharacter}) restricts to the formula 
(\ref{acharacter}) in type A, so that $Q / (kh+1) Q$ is a generalization of 
$\Park_n(k)$.   
Also, the $W$-module $Q / (kh+1) Q$ is as combinatorial as possible, i.e., 
an explicit action of $W$
on a finite set.
However, this construction depends
on the root lattice $Q$, and therefore does not extend to noncrystallographic 
real reflection groups.  We present a generalization to 
arbitrary real reflection groups involving homogeneous
systems of parameters.

\subsection{Homogeneous systems of parameters;  arbitrary finite type}  
\label{Homogeneous systems of parameters;  arbitrary finite type}  
Let $W$ be a real
reflection group with Coxeter number $h$ and let $V$ be the reflection representation of
$W$ considered over $\CC$.  Let $\CC[V]$ be the coordinate ring of polynomial functions on
$W$, equipped with its usual polynomial grading.  The contragredient action of $W$
on $\CC[V]$ given by $(w.f)(v) := f(w^{-1}.v)$ for $w \in W, f \in \CC[V],$ and $v \in V$ gives
$\CC[V]$ the structure of a graded $W$-module.  We let $g$ be a distinguished generator
of the cyclic group $\ZZ_{kh}$ and $\omega$ be a primitive $kh^{th}$ root of unity.

Let $\theta_1, \dots, \theta_n \in \CC[V]$ be a homogeneous system of parameters (hsop) of degree
$kh+1$ carrying 
$V^*$ and let $(\Theta) \subset \CC[V]$ be the ideal generated by the $\theta_i$.  
That is, the $\theta_i$ are homogeneous of degree $kh+1$
the zero set in $V$ cut out by $\theta_1 = \cdots = \theta_n = 0$ consists only of
the origin $\{ 0 \}$, the $\CC$-linear span of $\theta_1, \dots, \theta_n$ is stable under the 
action of $W$, and $\mathrm{span}_{\CC}\{ \theta_1, \dots, \theta_n \} \cong_W V^*$.
\footnote{The existence of such hsops is a deep consequence of the theory of rational Cherednik 
algebras \cite{BEG, Etingof, Gordon}.}

The ideal 
$(\Theta)$ is $W$-stable, 
so  the action of $W$ descends to the quotient
$\CC[V] / (\Theta)$.   Moreover, since the $\theta_i$ are homogeneous of degree $kh+1$, the 
quotient $\CC[V] / (\Theta)$ carries an action of  $\ZZ_{kh}$, where
 $g \in \ZZ_{kh}$ scales by $\omega^d$ in degree $d$.  The action of 
$W$ commutes with the action of $\ZZ_{kh}$, giving
$\CC[V] / (\Theta)$ the structure of a $W \times \ZZ_{kh}$-module. 

It can be shown \cite{Haiman} that the module
$\CC[V] / (\Theta)$ is finite dimensional over $\CC$ and that its character 
$\chi: W \times \ZZ_{kh} \rightarrow \CC$ is given by
\begin{equation}
\label{hsopchar}
\chi(w, g^d) = (kh+1)^{\mult_w(\omega^d)},
\end{equation}
where $\mult_w(\omega^d)$ is the multiplicity of the eigenvalue 
$\omega^d$ in the spectrum of $w$ acting on $V$.  Restricting this action to $W$ and comparing with
(\ref{crystcharacter}), we see that $\CC[V] / (\Theta) \cong_W Q / (kh+1) Q$ in crystallographic type.
Moreover, the quotient $\CC[V] / (\Theta)$ carries the additional structure of a 
$W \times \ZZ_{kh}$-module which is not apparent in 
$Q / (kh+1) Q$ (or in $\Park_n(k)$).  However, it is an open problem to find an explicit basis
of $\CC[V] / (\Theta)$ which realizes $\CC[V] / (\Theta)$ as a $W$-permutation module isomorphic
to $Q / (kh+1)Q$.
It is an aim of this paper to define
a parking space for any reflection group
which is explicitly a $W \times \ZZ_{kh}$-permutation module with character
(\ref{hsopchar}).

\subsection{ The noncrossing parking space in type A }  
In \cite{ARR} a 
$W \times \ZZ_h$-module $\Park^{NC}_W$ is constructed for any real reflection group $W$.
We review this construction when $W = \symm_n$ is of type A$_{n-1}$.  

\begin{figure}
\includegraphics[scale=0.6]{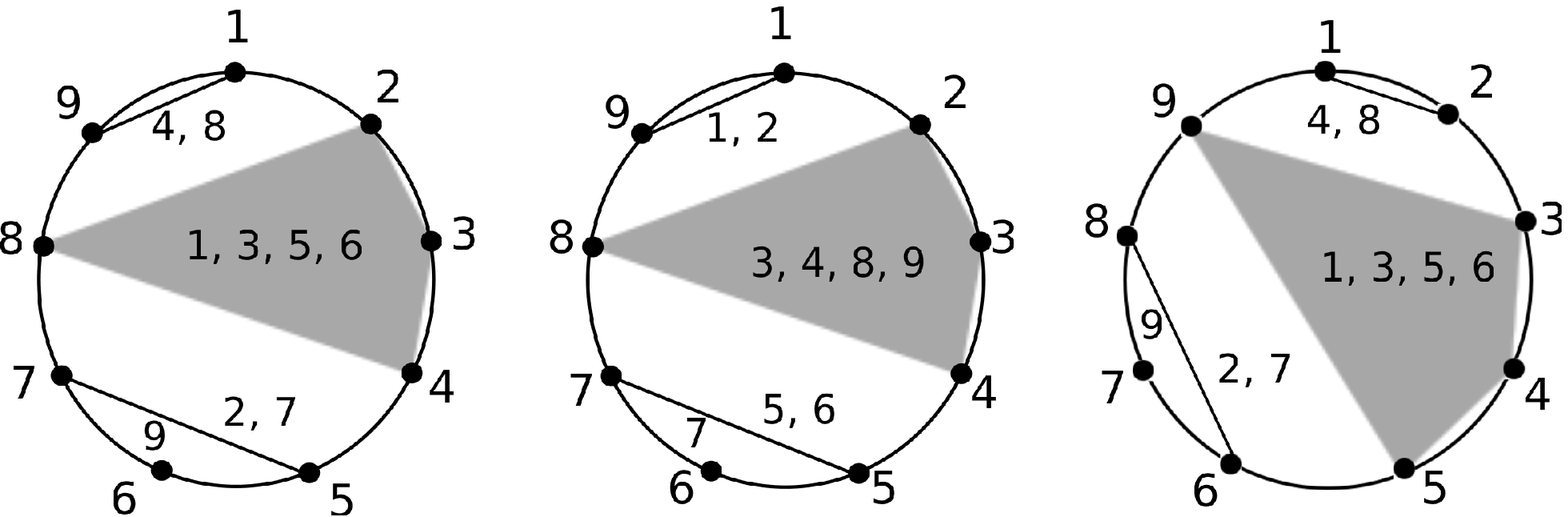}
\caption{Three noncrossing parking functions for $\symm_9$}
\label{fig:k1park}
\end{figure}

The elements of 
$\Park^{NC}_{\symm_n}$ are pairs $(\pi, f)$ where $\pi$ is a noncrossing partition of $[n]$ and 
$f: B \mapsto f(B)$ assigns each block $B \in \pi$ to a subset $f(B) \subseteq [n]$ such that
$|B| = |f(B)|$ and $[n] = \biguplus_{B \in \pi} f(B)$.  We can visualize the pair 
$(\pi, f)$ on the disc $\DD^2$ by drawing the noncrossing partition $\pi$ and labeling the block
$B$ of $\pi$ with the subset $f(B)$.  The group $\symm_n$ acts on the labels of these diagrams and
the cyclic group $\ZZ_h = \ZZ_n$ acts by clockwise rotation.

Figure~\ref{fig:k1park} shows three elements of $\Park^{NC}_{\symm_9}$ drawn on the disc
$\DD^2$.  The parking function on the left corresponds to the pair
$(\pi, f)$, where $\pi = \{ 1, 9 / 2, 3, 4, 8 / 5, 7 / 6 \}$ and the labeling $f$ is given by
\begin{align*}
f: \{1, 9 \} &\mapsto \{4, 8 \}, \\
\{ 2, 3, 4, 8 \} & \mapsto \{ 1, 3, 5, 6 \}, \\
\{ 5, 7 \} &\mapsto \{ 2, 7 \}, \\
\{ 6 \} &\mapsto \{ 9 \}.
\end{align*}
The parking function in the middle is the image of the parking function on the left under the action of
$(1, 3, 4)(2, 5, 8)(6, 9, 7) \in \symm_9$.  The parking function on the right is the image of the parking 
function on the left under the action of the distinguished generator of $\ZZ_9$.

It is proven \cite{ARR} that the character 
$\chi: \symm_n \times \ZZ_n \rightarrow \CC$ of the permutation module $\Park^{NC}_{\symm_n}$
is given by
(\ref{hsopchar}, $k = 1$).  A $\symm_n$-equivariant bijection 
$\Park^{NC}_{\symm_n} \xrightarrow{\sim} \Park_n$ is 
given by assigning $(\pi, f) \mapsto (a_1, \dots, a_n)$, where $a_i$ is the minimal element of the 
unique block $B \in \pi$ such that $i \in f(B)$.  From left to right, the three elements of 
$\Park^{NC}_{\symm_9}$ shown in
Figure~\ref{fig:k1park} map 
to the sequences $(2, 5, 2, 1, 2, 2, 5, 1, 6), 
(1, 1, 2, 2, 5, 5, 6, 2, 2), (3, 6, 3, 1, 3, 3, 6, 1, 7) \in \Park_9$.  
Observe that the action of $\ZZ_n$ is transparent on the set $\Park^{NC}_{\symm_n}$ but
not obvious on the set $\Park_n$.

It is clear that the orbits in $\Park^{NC}_{\symm_n} / \symm_n$ are indexed by noncrossing partitions 
$\pi$ of $[n]$, and that the orbit corresponding to a noncrossing partition $\pi$ is 
$\symm_n$-isomorphic to the coset representation ${\bf 1}_{\symm_{\lambda}}^{\symm_n}$, where
the block sizes in $\pi$ are $(\lambda_1 \geq \lambda_2 \geq \dots )$.  
This makes the $\symm_n$-module isomorphism
(\ref{iso}) transparent.

In \cite{ARR}, the module $\Park^{NC}_{\symm_n}$ is generalized to arbitrary reflection groups
$W$ using a $W$-analog of noncrossing partitions due to 
Bessis \cite{Bessis}, Brady and Watt \cite{BradyWatt}, and Reiner \cite{Reiner}.

\subsection{ The noncrossing $k$-parking space in type A }  In this paper we give a Fuss 
generalization $\Park^{NC}_W(k)$ of $\Park^{NC}_W$.  To gain intuition for this construction, 
we present it in type A$_{n-1}$.

 \begin{figure}
\includegraphics[scale=0.5]{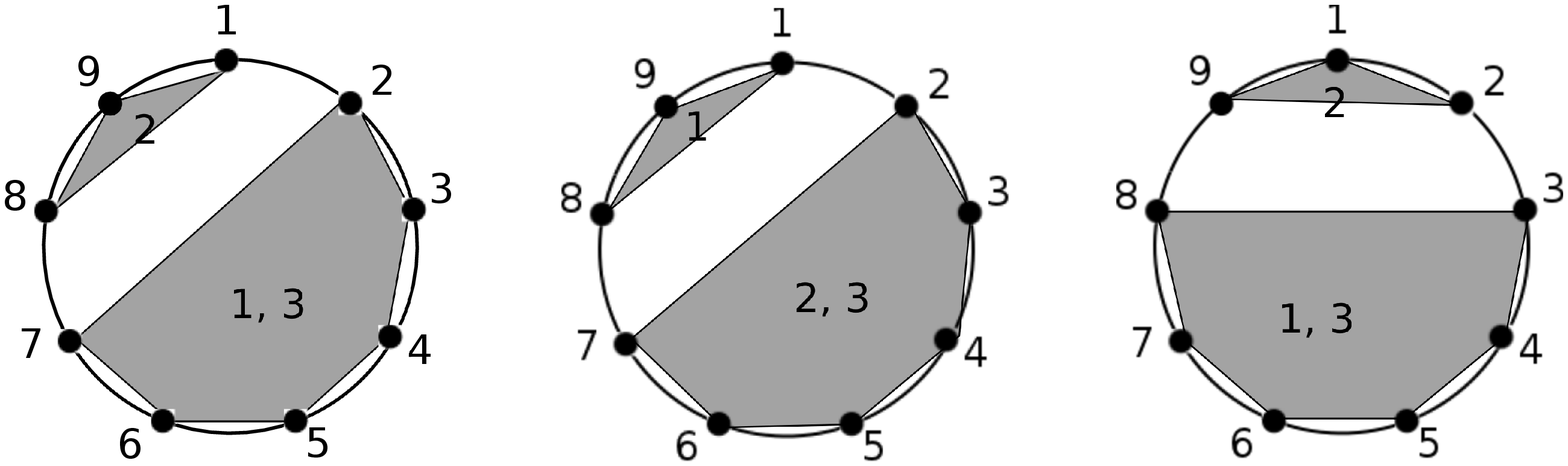}
\caption{Three elements of $\Park^{NC}_{\symm_n}(k)$ for 
$n = k = 3$}
\label{fig:Aaction}
\end{figure}

The elements of $\Park^{NC}_{\symm_n}(k)$ are pairs $(\pi, f)$ where 
$\pi$ is a $k$-divisible noncrossing partition of $[kn]$ and $f: B \mapsto f(B)$ assigns every
block $B \in \pi$ to a subset $f(B) \subseteq [n]$ such that 
\begin{enumerate}
\item for any block $B \in \pi$, we have $|B| = k |f(B)|$, and
\item we have $[n] = \biguplus_{B \in \pi} f(B)$.
\end{enumerate}
Drawing these pairs $(\pi, f)$ on 
$\DD^2$ as before, the group $\symm_n$ acts on the block labels and the cyclic
group $\ZZ_{kh} = \ZZ_{kn}$ acts by clockwise rotation.  

Figure~\ref{fig:Aaction} shows three examples of $\Park^{NC}_{\symm_n}(k)$
for $n = k = 3$.  The pair $(\pi, f)$ is depicted on the the left, where
$\pi = \{1, 8, 9 / 2, 3, 4, 5, 6, 7 \}$ and $f$ is the labeling
\begin{align*}
f: \{1, 8, 9 \} &\mapsto \{2\} \\
\{2,3,4,5,6,7\} &\mapsto \{1,3\}.
\end{align*}
The middle element of Figure~\ref{fig:Aaction} is the image of the left element 
under the action of $(1,2) \in \symm_3$.  The right element of Figure~\ref{fig:Aaction} is
the image of the left element under the action of $\ZZ_9$.

It will be proven in Proposition~\ref{weak-type-a} that 
the character $\chi: \symm_n \times \ZZ_{kn} \rightarrow \CC$ of this permutation representation is
given by (\ref{hsopchar}).  A $\symm_n$-equivariant bijection
$\Park^{NC}_{\symm_n}(k) \xrightarrow{\sim} \Park_n(k)$ can be obtained by sending 
$(\pi, f)$ to $(a_1, \dots, a_n)$, where $a_i$ is the minimal element of the unique block 
$B \in \pi$ such that $i \in f(B)$.  From left to right, the elements of $\Park_n(k)$ corresponding
to the elements of $\Park^{NC}_{\symm_n}(k)$ in Figure~\ref{fig:Aaction} are
$(2,1,2), (1, 2, 2),$ and $(3,1,3)$.

As in the $k = 1$ case, the $\symm_n$-orbits in $\Park^{NC}_{\symm_n} / \symm_n$ are indexed
by $k$-divisible noncrossing partitions $\pi$ of $[kn]$, and the orbit containing $\pi$ affords
the coset representation ${\bf 1}_{\symm_{\lambda}}^{\symm_n}$, where the block sizes 
in $\pi$ are $(k \lambda_1 \geq k \lambda_2 \geq \dots )$.
This makes the isomorphism (\ref{fussiso}) transparent.

The $\ZZ_{kn}$-structure carried by $\Park^{NC}_{\symm_n}(k)$ has
implications related to the cyclic sieving phenomenon (see \cite{RSWCSP}).
Let $\Park^{NC}_{\symm_n}(k)^{\symm_n}$ be the 
$\symm_n$-invariant subspace of $\Park^{NC}_{\symm_n}(k)$.  The space 
 $\Park^{NC}_{\symm_n}(k)^{\symm_n}$ has a basis consisting of orbit sums which are indexed
 by $k$-divisible noncrossing partitions $\pi$ of $[kn]$ and a residual $\ZZ_{kn}$-action which acts 
 on this basis by rotation.  We will use
 an algebraic interpretation of $\Park^{NC}_{\symm_n}(k)$
 to compute the character of 
 $\Park^{NC}_{\symm_n}(k)^{\symm_n}$
 and prove that the triple 
 $(X, \ZZ_{kn}, X(q))$ exhibits the {\em cyclic sieving phenomenon} 
 where $X$ is the set of $k$-divisible noncrossing partitions 
 of $[kn]$, $\ZZ_{kn}$ acts on $X$ by rotation, and 
 $X(q)$ is the {\sf $q$-Fuss-Catalan number} $\Cat^k(n;q) := \frac{1}{[kn+1]_q}{(k+1)n \brack n }_q$.
 Here we use the standard $q$-analog notation
 \begin{align*}
 [n]_q &:= 1 + q + q^2 + \cdots + q^{n-1} = \frac{1-q^n}{1-q}, \\
 [n]!_q &:= [n]_q [n-1]_q \cdots [1]_q, \\
 {n \brack k}_q &:= \frac{[n]!_q}{[k]!_q [n-k]!_q}.
 \end{align*}
 
 This cyclic sieving 
 result will be generalized to all real reflection groups $W$ modulo a conjecture about
 $\Park^{NC}_W(k)$ (Weak Conjecture).  This conjecture, in turn, will be proven
 for all types other than the exceptional types EFH.
 
 The type A version of this cyclic sieving phenomenon 
 was proven (unpublished) by D. White.  The generalization for 
 arbitrary real (in fact, well-generated complex) reflection groups 
 $W$ was conjectured by Armstrong \cite{Armstrong} (see also \cite{BessisR}).  Our work 
 proves Armstrong's conjecture in a uniform way modulo conjectures about $\Park^{NC}_W(k)$
 which are proven in types ABCDI.
 A case-by-case proof of Armstrong's conjecture as 
 it applies to any well-generated complex
 reflection group was given by Krattenthaler and M\"uller 
 \cite{KrattenthalerMuller, KrattenthalerMuller2}.

Our generalization of $\Park^{NC}_{\symm_n}(k)$ to arbitrary reflection groups $W$ will involve 
a Fuss analog of the $W$-noncrossing partitions developed in Armstrong's thesis
\cite{Armstrong}.  

\subsection{ The idea of the algebraic $k$-parking space }  We will attach
a second $W \times \ZZ_{kh}$-module called $\Park^{alg}_W(k)$ to any real reflection group $W$.  
The definition of $\Park^{alg}_W(k)$ is algebraic and involves deforming the ideal $(\Theta)$ in the 
quotient module $\CC[V] / (\Theta)$ 
of Section~\ref{Homogeneous systems of parameters;  arbitrary finite type}  
to get a new inhomogeneous ideal $(\Theta - \xx)$.  The quotient
$\CC[V] / (\Theta - \xx)$ will still carry an action of $W \times \ZZ_{kh}$ whose character is given by
(\ref{hsopchar}).  We conjecture that $\Park^{alg}_W(k)$ is naturally a $W \times \ZZ_{kh}$-permutation
module, and that we have an isomorphism of $W \times \ZZ_{kh}$-sets
$\Park^{NC}_W(k) \cong_{W \times \ZZ_{kh}} \Park^{alg}_W(k)$.  We will prove the strongest form
of our conjecture in rank 1 (and for the `dimension $\leq 1$ parts' of $\Park^{alg}_W(k)$ and
$\Park^{NC}_W(k)$ in arbitrary type), the intermediate form of our conjecture in types BCDI, and
the weak form of our conjecture in type A.

\subsection{Organization}
The remainder of the paper is organized as follows.  In {\bf Section 2} we will present background
material on classical noncrossing and $k$-divisible noncrossing partitions, reflection groups, 
$W$-noncrossing partitions, $W$-noncrossing parking functions, $k$-$W$-noncrossing partitions,
and homogeneous systems of parameters.

In {\bf Section 3} we present the two main constructions of this paper: the 
$k$-$W$-noncrossing parking space $\Park^{NC}_W(k)$ and the 
$k$-$W$-algebraic parking space $\Park^{alg}_W(k)$.  We present the three versions 
(Strong, Intermediate, and Weak)
of
our Main Conjecture which relates these spaces, as well as a cyclic sieving consequence 
of any form of the Main Conjecture.

In {\bf Section 4} we present evidence for the Strong Conjecture as it applies to any reflection
group. 

In {\bf Section 5} we prove the Intermediate Conjecture in dihedral type I.

In {\bf Section 6} we prove the Intermediate Conjecture in hyperoctohedral type BC.

In {\bf Section 7} we prove the Intermediate Conjecture in type D.  

In {\bf Section 8} we prove the Weak Conjecture in type A.  

In {\bf Section 9} we restrict our attention to crystallographic type and present a `nonnesting'
analog $\Park^{NN}_{\Phi}(k)$ of  $\Park^{NC}_W(k)$ for any crystallographic root system
$\Phi$.  This nonnesting analog only carries an action of the Weyl group $W = W(\Phi)$ rather 
than the product $W \times \ZZ_{kh}$, and does not exist outside of crystallographic type.

We close in {\bf Section 10} with open problems.

\section{Background}
\label{Background}

We begin with background related to classical noncrossing partitions,  intersection
lattices, absolute order and $W$-noncrossing partitions, Fuss analogs of $W$-noncrossing partitions, the 
definition of $W$-noncrossing parking functions in \cite{ARR}, 
and homogeneous systems of parameters.  

\subsection{Notation}
Given any poset $P$, a {\sf $k$-multichain} is a sequence of 
$k$ elements $x_1, \dots, x_k$ of $P$ satisfying $x_1 \leq \dots \leq x_k$.

We let $W$ denote an irreducible finite real reflection group and we let
$V$ denote the reflection representation of $W$.  
Except in Section~\ref{Nonnesting analogs}, we extend scalars to $\CC$ and consider 
$V$ to be a complex vector space.
We let $n = \dim(V)$ be the rank of $W$.

We let $S \subset W$ be a choice of simple reflections in $W$ and we let
$T = \bigcup_{w \in W} w S w^{-1}$ be the set of reflections in $W$.  We let $c \in W$ denote 
a choice of (standard) Coxeter element.  That is $c$ is an element of $W$
of the form $c = s_1 \dots s_n$, where
$S = \{ s_1, \dots, s_n \}$.  Any two Coxeter elements of $W$ are conjugate.  We denote
by $h$ their common order, also called the Coxeter number of $W$.

We denote by $k \in \ZZ_{> 0}$ a Fuss parameter.  We let $g$ be a distinguished
choice of generator for the cyclic group $\ZZ_{kh}$ and we let 
$\omega$ be a primitive $kh^{th}$ root of unity.

\subsection{Set partitions}
Set partitions of $[n]$ are partially ordered by {\sf refinement}: $\pi \leq \pi'$ if each block of 
$\pi'$ is a union of blocks of $\pi$.  The poset of set partitions of $[n]$ has the structure 
of a geometric lattice with minimal element $\hat{0} = \{1/2/ \dots/ n \}$ and maximal element
$\hat{1} = \{1,2,\dots,n \}$.

A partition $\pi$ of the set $[n]$ is called {\sf noncrossing} if whenever there are 
indices $1 \leq a < b < c < d \leq n$ such that $a \sim c$ and $b \sim d$ in $\pi$, we 
necessarily have $a \sim b \sim c \sim d$ in $\pi$.  Equivalently, if one labels the boundary
of the disc $\DD^2$ clockwise with the points $1, 2, \dots, n$, a partition $\pi$ of $[n]$
is noncrossing if and only if the convex hulls of its blocks do not intersect.
The left of Figure~\ref{fig:kreweras} shows the noncrossing partition
$\{ 1, 2, 5 / 3, 4 / 6 \}$ of $[6]$ drawn on $\DD^2$.  The right of Figure~\ref{fig:kreweras}
shows the noncrossing partition $\{1, 6 / 2 / 3, 5 / 4 \}$ drawn on $\DD^2$.
The cyclic group $\ZZ_n$ acts on noncrossing partitions of $[n]$ by clockwise rotation.

A set partition $\pi$ is called {\sf $k$-divisible} if each block of $\pi$ has size divisible by
$k$.  Refinement order on set partitions restricts to 
give a poset structures on the sets of noncrossing partitions and
$k$-divisible noncrossing partitions.

The poset of noncrossing partitions of $[n]$ has the structure of a complemented lattice.
An explicit complementation $K$ was introduced by Kreweras \cite{Kreweras}.
Label the boundary of the disc $\DD^2$ clockwise with $1', 1, 2', 2, \dots, n', n$.  Given
a noncrossing partition $\pi$ of $[n]$, draw the convex hulls of the blocks of $\pi$
on $\DD^2$ using the unprimed vertices.  The Kreweras complement $K(\pi)$ is the 
unique maximal partition of $[n]$ under refinement obtained using the primed 
vertices whose blocks do not intersect  the blocks of $\pi$ in $\DD^2$.  The partition
$K(\pi)$ is automatically noncrossing.

Figure~\ref{fig:kreweras} shows an example of Kreweras complementation when $n = 6$.
The Kreweras complement of $\{ 1,2,5 / 3,4 / 6\}$ is $\{1,6 / 2 / 3,5 / 4\}$.  

\begin{figure}
\includegraphics[scale=0.6]{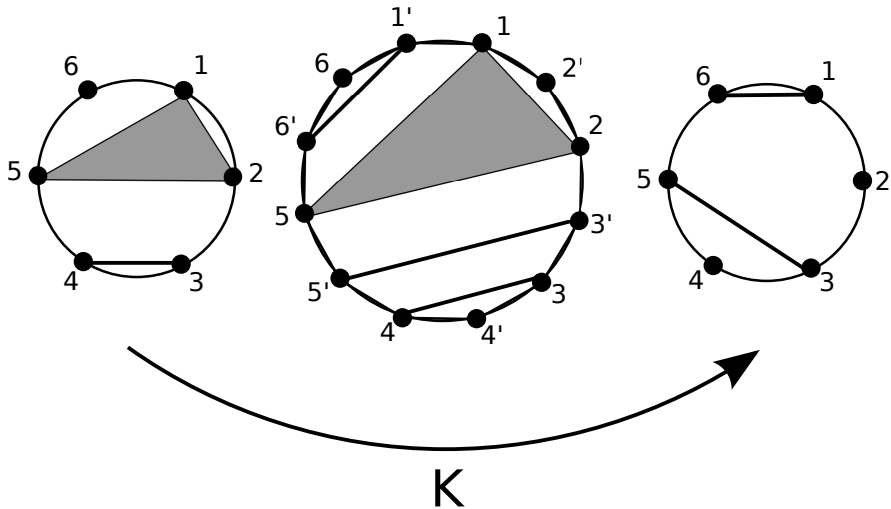}
\caption{Kreweras complementation}
\label{fig:kreweras}
\end{figure}

There is a natural injection from the set of noncrossing partitions of $[n]$ 
to $\symm_n$.  If $\pi$ is a noncrossing partition of $[n]$,
the corresponding permutation $\Omega(\pi) \in \symm_n$ of $[n]$ 
has cycles given by the blocks of $\pi$, read 
in clockwise order on $\DD^2$.  In particular, the maximal noncrossing partition
has permutation $\Omega( \{ 1, 2, \dots, n \} ) = (1,2,\dots,n)$ and the 
minimal noncrossing partition has permutation
$\Omega( \{ 1 / 2 / \dots / n \} ) = (1) (2) \dots (n) = ()$.  
The permutation of the partition $\pi$ on the left of Figure~\ref{fig:kreweras} is
$\Omega(\pi) = (1,2,5)(3,4)(6) \in \symm_6$.
Let $\Pi$ denote the inverse of the injection $\Omega$.  For example, we have
$\Pi ( (1,2,5)(3,4)(6) ) = \{1, 2, 5 / 3, 4 / 6 \}$.

There is a bijective correspondence between $k$-divisible noncrossing partitions of 
$[kn]$ and  $k$-multichains of noncrossing partitions of $[n]$ under refinement.  
To explain this bijection, we will need the notion of a shuffle of partitions.
Given two sets of integers $A$ and $B$ satisfying $|A| = |B|$ and a partition $\pi$ of
$A$, let $\pi(B)$ be the unique partition of $B$ which is order isomorphic to $\pi$.
For example, if $\pi = \{1,3 / 2,4 /5 \}$ (so that $A = [5]$) and $B = \{1, 5, 7, 8, 9\}$, then
$\pi(B) = \{1,7 / 5,8 / 9\}$

The {\sf shuffle} of a $k$-tuple $(\pi_1, \dots, \pi_k)$ of partitions of $[n]$ is the partition
 of $[kn]$ generated by $\pi_i( \{i, i+k, \dots, i+(n-1)k \} )$ for $1 \leq i \leq k$.  Denote this
 partition by $\shuffle(\pi_1, \dots, \pi_k)$. 
 In general, the shuffle of a pair of noncrossing partitions need not be noncrossing.  
For example, we have $\shuffle( \{1, 2 \}, \{1 , 2 \} ) = \{ 1, 3 / 2, 4\}$.  However, given
a $k$-multichain $(\pi_1 \leq \dots \leq \pi_k)$ of noncrossing partitions of $[n]$ under
refinement order,
we can obtain a noncrossing partition of $[kn]$ using shuffles. 

To make this precise, we abuse notation by letting $\Omega$ act on tuples 
$(\pi_1, \dots, \pi_k)$ of set partitions by 
$\Omega(\pi_1, \dots, \pi_k) := (\Omega(\pi_1), \dots, \Omega(\pi_k))$.  Similarly, we let
$\Pi$ act on tuples of permutations by
$\Pi(w_1, \dots, w_k) := (\Pi(w_1), \dots, \Pi(w_k))$, assuming $\Pi(w_i)$ is defined for all $i$.
Finally, we define a {\sf boundary map} $\delta$ on sequences 
$(w_1, \dots, w_k)$ of permutations by 
\begin{equation*}
\delta(w_1, \dots, w_k) = (w_1^{-1} w_2, w_2^{-1} w_3, \dots, w_{n-1}^{-1} w_n, w_n^{-1}c),
\end{equation*}
where $c = (1,2,\dots,n) \in \symm_n$.  The following lemma collects results from
\cite[Chapter 4]{Armstrong}.

\begin{lemma} \cite{Armstrong}
\label{classical-noncrossing}
Let $(\pi_1 \leq \dots \leq \pi_k)$ be a $k$-multichain of noncrossing 
partitions of $[n]$ in refinement order.
Then the partition 
\begin{equation*}
\shuffle \circ \Pi \circ \delta \circ \Omega (\pi_1 , \dots , \pi_k)
\end{equation*}
is noncrossing and the Kreweras complement
of this partition is a $k$-divisible noncrossing partition of $[kn]$.  This procedure defines 
a bijection $\nabla := K \circ \shuffle \circ \Pi \circ \delta \circ \Omega $ from the set of $k$-element multichains of noncrossing partitions 
of $[n]$ to the set of $k$-divisible noncrossing partitions of $[kn]$.  

The partition $\pi := \nabla(\pi_1 , \dots , \pi_k)$ of $[kn]$ has the following properties:
\begin{itemize}
\item for all indices $1 \leq i < j \leq n$, we have that $i \sim j$ in $\pi_1$ if and only if
$(k-1)i + 1 \sim (k-1)j + 1$ in $\pi$, and
\item given a block $B$ of $\pi_1$ of size $b$, the block of $\pi$ containing
$\{ (k-1)i + 1 \,:\, i \in B \}$ has size $kb$.
\end{itemize}
\end{lemma}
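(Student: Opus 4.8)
The plan is to identify the composite $\nabla = K \circ \shuffle \circ \Pi \circ \delta \circ \Omega$ with the bijection, constructed in \cite[Chapter~4]{Armstrong}, between $k$-multichains of noncrossing partitions of $[n]$ and $k$-divisible noncrossing partitions of $[kn]$, and then to read the two bulleted properties off the explicit description of the individual maps. Each arrow is a standard correspondence. The map $\Omega$ identifies the lattice of noncrossing partitions of $[n]$ under refinement with the interval $[e,c]$ of $\symm_n$ under absolute order, $c = (1,2,\dots,n)$, so a multichain $\pi_1 \leq \dots \leq \pi_k$ is carried to an absolute-order chain $w_1 \leq \dots \leq w_k \leq c$, $w_i := \Omega(\pi_i)$; the boundary map $\delta$ records the consecutive quotients $w_i^{-1} w_{i+1}$ for $1 \leq i \leq k$ (with the convention $w_{k+1} := c$); $\Pi$ turns these back into set partitions $\rho_i := \Pi(w_i^{-1} w_{i+1})$; $\shuffle$ interleaves the $\rho_i$ along the residue classes $A_i := \{\, i, i+k, \dots, i+(n-1)k \,\}$ of $[kn]$; and $K$ is Kreweras complementation in the noncrossing partition lattice of $[kn]$. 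The three nontrivial inputs, all from \cite{Armstrong}, are: that $w_i \leq w_{i+1} \leq c$ forces $w_i^{-1}w_{i+1} \in [e,c]$ (so that $\Pi$ applies), which follows from additivity of absolute reflection length along the chain, $\ell_T(w_i) + \ell_T(w_i^{-1}w_{i+1}) + \ell_T(w_{i+1}^{-1}c) = \ell_T(c)$; that the shuffle of the resulting delta sequence is noncrossing, something that fails for generic shuffles of noncrossing partitions, as $\shuffle(\{1,2\},\{1,2\}) = \{1,3/2,4\}$ already shows; and that the Kreweras complement of this shuffle is $k$-divisible. Granting these, bijectivity is formal: each step can be inverted (un-complement, un-shuffle the strands, form partial products, pass back through $\Pi$ and $\Omega$), and the conditions cutting out the source and target correspond under the inverses.

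It remains to verify the two bulleted properties, and here I would follow $\pi_1$ — equivalently $w_1 = \Omega(\pi_1)$ — through the construction. The mechanism is the telescoping identity
\[
\Omega(\rho_1)\,\Omega(\rho_2) \cdots \Omega(\rho_k) \;=\; (w_1^{-1} w_2)(w_2^{-1} w_3) \cdots (w_k^{-1} c) \;=\; w_1^{-1} c ,
\]
together with the formula $\Omega\bigl(K(\sigma)\bigr) = c\,\Omega(\sigma)^{-1}$ for (the normalization of $K$ fixed above) Kreweras complementation, $c$ now denoting the long cycle of $[kn]$. The shuffle places $\rho_1$ on $A_1 = \{\,1, k+1, \dots, (n-1)k+1\,\}$, a copy of $[n]$ via $i \mapsto m_i := (i-1)k+1$; restricting $\sigma$ and $\pi := \nabla(\pi_1,\dots,\pi_k)$ to $A_1$ and combining the two displays shows that the partition induced by $\pi$ on the marked set $\{m_1,\dots,m_n\}$ is exactly $\Pi(w_1) = \pi_1$, which is the first bulleted statement (the marked points being $m_i = (i-1)k+1$). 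For the second, a block $B$ of $\pi_1$ of size $b$ then has its $b$ marked representatives $\{m_i : i \in B\}$ inside a single block $D$ of $\pi$, and by the structural fact from \cite{Armstrong} that every block of a $k$-divisible noncrossing partition arising this way meets each residue class $A_1, \dots, A_k$ in equally many points, one gets $|D| = k\,|D \cap A_1| = kb$.

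The substantive content is entirely in the cited facts from \cite[Chapter~4]{Armstrong} — noncrossingness of the delta-sequence shuffle, $k$-divisibility of its Kreweras complement, reversibility of the procedure, and the equal-residue-meeting property — so the real labor here is a faithful re-packaging of Armstrong's results plus careful attention to conventions: clockwise versus counterclockwise labelling of $\DD^2$, the left-versus-right normalization of the Kreweras complement (which fixes the sign in $\Omega(K(\sigma)) = c\,\Omega(\sigma)^{-1}$), and which residue class each strand of the shuffle occupies. I expect the main obstacle to be the second bullet: ruling out that $|D|$ is a larger multiple of $b$ requires the equal-residue-meeting property rather than mere $k$-divisibility, and the cleanest way to package this is to prove the first bullet first and then invoke Armstrong's description of how block sizes transform under $\nabla$, rather than arguing directly inside the noncrossing partition lattice of $[kn]$.
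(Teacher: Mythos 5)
Your proposal is consistent with how the paper itself handles this lemma: the paper offers no proof at all, citing \cite[Chapter 4]{Armstrong} wholesale, and you likewise defer the three substantive ingredients (that the $\delta$-factors lie in $[1,c]$ by length-additivity, that the shuffle of the $\delta$-sequence is noncrossing, and that its Kreweras complement is $k$-divisible, whence bijectivity) to Armstrong, which is a faithful match in both content and level of rigor. What you add beyond the paper is a derivation of the two bulleted properties, and that derivation is sound: with the paper's labelling convention one indeed has $\Omega(K(\sigma)) = c\,\Omega(\sigma)^{-1}$ (this checks out on Figure~\ref{fig:kreweras}), and tracking a marked point $(i-1)k+1$ under iteration of $c\,\Omega(\sigma)^{-1}$ moves it through strands $2,\dots,k$ with indices transformed by $w_2^{-1}w_1, w_3^{-1}w_2, \dots, c^{-1}w_k$ before the first return to the marked strand, so the telescoping product gives first return at $(w_1(i)-1)k+1$; this is exactly the first bullet, and combined with the fact that each block of a $k$-divisible noncrossing partition of $[kn]$ meets every residue class modulo $k$ equally often (consecutive elements of a block enclose a union of whole blocks, hence advance the residue by one), it gives the second. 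Two small remarks: the indices $(k-1)i+1$ in the lemma as printed are a typo for $(i-1)k+1$ — the sentence after the lemma, Example~\ref{ex:AkNC}, and Figure~\ref{fig:Afuss} all use the set $\{1,k+1,\dots,(n-1)k+1\}$ — and you silently (and correctly) work with the latter; and your caution about strand/convention bookkeeping is warranted, since the worked example in the paper (the shuffle $\{1,7/2,5/\dots\}$ versus the strand placement dictated by the displayed definitions) already shows how easily these conventions get scrambled, even though the final partition $\{1,8,9,10,11,12/2,6,7/3,4,5\}$ and both bullets come out right.
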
 

The first item in Lemma~\ref{classical-noncrossing} states that $\pi_1$ is 
order isomorphic to the restriction of the image $\nabla(\pi_1 , \dots , \pi_k)$ to the set 
$\{1, k + 1, 2k + 1, \dots, (n-1)k + 1 \}$, so that $\pi_1$ can be easily recovered from
$\nabla(\pi_1 , \dots , \pi_k)$.  This fact will be useful for obtaining a visual interpretation
of type A $k$-parking functions.  Recovering $\pi_i$ from 
$\nabla(\pi_1 , \dots , \pi_k)$ is more complicated for $i > 1$.

Figure~\ref{fig:Afuss} shows an example of the map $\nabla$ when
$n = 4$ and $k = 3$ (if one temporarily ignores the numbers inside the blocks).  
We start with the $3$-multichain
$ \{1,4 / 2 / 3 \} \leq \{ 1, 3, 4 / 2 \} \leq \{1,3,4 / 2 \}$ of noncrossing partitions of 
$[4]$.  Applying the map $\Omega$ yields the triple
$((1,4), (1,3,4), (1,3,4))$ of permutations in $\symm_4$.  Applying the boundary 
map $\delta$ yields
$((1,3),(),(1,2))$.  Applying $\shuffle \circ \Pi$ yields the noncrossing partition
$\{1,7 / 2,5 / 3 / 4 / 6/ 8/ 9/ 10/ 11/12\}$ of $[12]$.  Applying the
Kreweras map $K$ yields the $3$-divisible noncrossing partition
$\{1,8,9,10,11,12 / 2,6,7 / 3,4,5 \}$ of $[12]$.  Observe that the restriction of 
this latter partition to $\{1,4,7,10\}$ is $\{1,10/4/7\}$, which is order isomorphic to
$\{1,4/2/3\}$.

\subsection{Reflection groups and intersection lattices}  
A {\sf standard parabolic subgroup} of $W$ is a subgroup $W_I$ generated by any 
subset $I \subseteq S$.  A {\sf parabolic subgroup} of $W$ is any $W$-conjugate of a 
standard parabolic subgroup.

Given $t \in T$, let $H_t \subset V$
be the hyperplane through which $t$ reflects.  The {\sf Coxeter arrangement}
$\Cox(W)$ is the hyperplane arrangement in $V$ with hyperplanes 
$\{ H_t \,:\, t \in T \}$.  The {\sf intersection lattice} $\LLL$ of $W$ is the collection
of all intersections of the hyperplanes in $\Cox(W)$, partially ordered
by {\bf reverse} inclusion ($X \leq_{\LLL} Y \Leftrightarrow X \supseteq Y$).  
Subspaces $X \in \LLL$ are called {\sf flats}. 
The lattice $\LLL$ carries a natural action of the group $W$ given by $w.X = wX$.
 It can be shown
that $V^w \in \LLL$ for any $w \in W$.  The map $W \rightarrow \LLL$ defined by $w \mapsto V^w$
is surjective, but is almost never injective.

Given any subset $X \subseteq V$, let $W_X := \{ w \in W \,:\, w.v = v \text{ for all } v \in X \}$ be
the corresponding isotropy subgroup of $W$.  Similarly, for any $U \subseteq W$, let 
$V^U = \bigcap_{u \in U} V^u$ be the fixed space of $U$.  Parabolic subgroups and flats are 
related as follows.

\begin{theorem} \cite{BarceloIhrig} (Galois correspondence)  
\label{galois}
The map $X \mapsto W_X$ defines an order-preserving bijection between the intersection
lattice $\LLL$ and the set of parabolic subgroups of $W$ ordered by inclusion.  The
inverse to this map is given by $U \mapsto V^U$.
\end{theorem}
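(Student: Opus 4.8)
The plan is to verify that the two maps $X \mapsto W_X$ and $U \mapsto V^U$ are mutually inverse, order-preserving, and take values in the claimed sets. The strategy decomposes into four steps. First I would check that for any flat $X \in \LLL$ the isotropy group $W_X$ is a parabolic subgroup: writing $X = H_{t_1} \cap \cdots \cap H_{t_m}$ as an intersection of reflecting hyperplanes, one uses the classical fact (Steinberg's fixed-point theorem) that the pointwise stabilizer in $W$ of any subset of $V$ is generated by the reflections it contains, hence $W_X = \langle t \in T : X \subseteq H_t \rangle$, and then invokes the theorem of Steinberg/Bourbaki that such a reflection subgroup fixing a subspace pointwise is a parabolic subgroup of $W$ (it is conjugate to a standard parabolic $W_I$). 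Conversely, a parabolic subgroup $P = w W_I w^{-1}$ has fixed space $V^P = w \cdot V^{W_I}$, and $V^{W_I}$ is the intersection of the reflecting hyperplanes $H_s$ for $s \in I$, so $V^P \in \LLL$; this handles the second step, that $U \mapsto V^U$ lands in $\LLL$ when $U$ is parabolic.

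The third step is the inversion identities. For a flat $X$, I would show $V^{W_X} = X$: the inclusion $X \subseteq V^{W_X}$ is immediate since every element of $W_X$ fixes $X$ pointwise; for the reverse inclusion, $W_X$ contains all reflections $t$ with $X \subseteq H_t$, and since $X$ is by definition the intersection of exactly those hyperplanes through it (flats are intersections of members of $\Cox(W)$, and one may take the maximal such family), $V^{W_X} \subseteq \bigcap_{t: X \subseteq H_t} H_t = X$. For a parabolic subgroup $P$, the identity $W_{V^P} = P$ follows by the same Steinberg argument: $W_{V^P}$ is generated by the reflections fixing $V^P$ pointwise, and one checks these are precisely the reflections in $P$ (for $P = W_I$ this is the statement that a reflection of $W$ fixing $V^{W_I}$ lies in $W_I$, again classical; the general case follows by conjugation).

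The fourth step, order-preservation in both directions, is essentially formal: if $X \leq_{\LLL} Y$, i.e. $X \supseteq Y$, then anything fixing $X$ pointwise fixes $Y$ pointwise, so $W_X \subseteq W_Y$; conversely $U \subseteq U'$ gives $V^U \supseteq V^{U'}$, i.e. $V^U \leq_{\LLL} V^{U'}$. Combined with the bijectivity from Step 3, this shows the correspondence is an isomorphism of posets. The main obstacle is Step 1: the input that the pointwise stabilizer of a subspace is a \emph{parabolic} (not merely a reflection) subgroup is the nontrivial classical ingredient — it is Steinberg's theorem together with the description of parabolics via $W_I = \mathrm{Stab}_W(V^{W_I})$ — and everything else is bookkeeping around it; in a paper like this one would simply cite \cite{BarceloIhrig} or the standard references and move on.
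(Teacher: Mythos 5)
Your argument is correct, and in fact the paper offers no proof of this statement at all: it is quoted as a known result with a citation to Barcelo--Ihrig, so there is no "paper proof" to compare against beyond that reference. Your sketch is the standard argument, and you correctly isolate the only nontrivial input: Steinberg's fixed-point theorem (the pointwise stabilizer of any subset of $V$ is generated by the reflections it contains) together with the classical identification of point stabilizers with parabolic subgroups. The one step worth making explicit in your Step 1 is the reduction from the pointwise stabilizer of a subspace to the stabilizer of a single point: choose a generic point $x_0 \in X$ lying on no reflecting hyperplane that fails to contain $X$; then every reflection fixing $x_0$ has $X \subseteq H_t$, so Steinberg gives $\mathrm{Stab}_W(x_0) \subseteq W_X$, and the reverse inclusion is trivial, whence $W_X = \mathrm{Stab}_W(x_0)$ is conjugate to a standard parabolic. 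With that inserted, your Steps 2--4 (that $V^{P}$ is a flat for $P$ parabolic, the two inversion identities $V^{W_X}=X$ and $W_{V^P}=P$, and the formal order-preservation with respect to reverse inclusion on $\LLL$) are all sound bookkeeping, and the whole proof is complete modulo the cited classical facts.
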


\begin{example}
\label{ex:typeA1}
Suppose that $W = \symm_n$ has type A$_{n-1}$.  Then the reflection representation $V$ is
the subspace of $\CC^n$ which on which the 
equality $x_1 + \cdots + x_n = 0$ is satisfied, where the $x_i$ are
the standard coordinate functions.  The group $W = \symm_n$ acts on $V$ by coordinate
permutation.
The set of reflections is $T = \{ (i,j) \,:\, 1 \leq i < j \leq n \} \subset W$, where $(i,j)$ reflects 
across the hyperplane $x_i - x_j = 0$.  The set $S = \{ s_1, \dots, s_{n-1} \}$ is a choice 
of simple reflections, where $s_i = (i, i+1)$.  The rank is $n-1$, a choice
of Coxeter element is $c = (1,2, \dots, n)$, and the Coxeter number
$h = n$.

The Coxeter arrangement $\Cox(W)$ consists of the hyperplanes 
$\{ x_i - x_j = 0 \,:\, 1 \leq i < j \leq n \}$ in $V$.  The intersection lattice $\LLL$ can be identified
with the lattice of set partitions of $[n]$ by sending a flat $X \in \LLL$ to the partition of $[n]$
defined by $i \sim j$ if and only if the coordinate equality $x_i = x_j$ holds on $X$.  
For example, if $n = 6$ the partition $\{ 1,3 / 2, 4, 5 / 6 \}$ corresponds to the flat defined by the
equalities $x_1 = x_3$ and $x_2 = x_4 = x_5$.
The isotropy subgroup $W_{X(\pi)}$ of the flat
$X(\pi) \in \LLL$ associated to a partition $\pi$ of $[n]$ is the subgroup of $\symm_n$ which 
permutes  elements within the blocks of $\pi$.  For example, 
$W_{X(\{1,3/2,4,5/6\})} = \symm_{\{1,3\}} \times \symm_{\{2,4,5\}} \times \symm_{\{6\}}$.
\end{example}

\subsection{Reflection length and $W$-noncrossing partitions}  For any $w \in W$, a 
{\sf $T$-reduced expression} for $w$ is a word $t_1 \dots t_r$ in $T$ of minimal length
such that $w = t_1 \dots t_r$.  The {\sf reflection length} of $w$, denoted
$\ell_T(w)$, is the length $r$ of any $T$-reduced expression for $w$.  (This 
is {\em not} the usual Coxeter length function, in which one would consider words in
$S$ instead of $T$.)  We have that $\ell_T(w) = n - \dim(V^w)$ for any $w \in W$.

Reflection length can be used to define a partial order on $W$.  {\sf Absolute order}
$\leq_T$ on $W$ is defined by 
\begin{equation}
u \leq_T v \Leftrightarrow \ell_T(v) = \ell_T(u) + \ell_T(u^{-1}v).
\end{equation}
Equivalently, we have that $u \leq_T v$ if and only if $u$ occurs as a prefix in some $T$-reduced
expression for $v$.  Also, 
if there exists $w \in W$ such that $u, v \leq_T w$,
we have that $u \leq_T v$ if and only if $V^u \supseteq V^v$.
The poset $(W, \leq_T)$ is graded with rank function given 
by reflection length.  The identity element $1 \in W$ is the unique minimal element of absolute order.
The Coxeter elements of $W$ form a subset of the maximal elements of $W$.
Absolute order was used by Brady and Watt \cite{BradyWatt} and Bessis \cite{Bessis}
to define a generalization of the  noncrossing partitions to any reflection group $W$.

\begin{defn} \cite{BradyWatt}, \cite{Bessis}
\label{definition-noncrossing-partitions}
Let $c \in W$ be a Coxeter element and define the poset $NC(W)$ of
{\sf $W$-noncrossing partitions} to be the interval $[1, c]_T$ in absolute order.  Brady and 
Watt proved that we have an injection
\begin{align*}
NC(W) &\hookrightarrow \LLL \\
w & \mapsto V^w.
\end{align*}
A flat $X \in \LLL$ will be called {\sf noncrossing} if it is in the image of this injection.
\end{defn}

While the definition of $NC(W)$ given above may appear to depend on the choice
of Coxeter element $c$, this dependence is superficial.  If $c, c' \in W$ are Coxeter elements,
there exists $w \in W$ such that $c = w c' w^{-1}$.  It can be shown that the map 
$v \mapsto w v w^{-1}$ defines a poset isomorphism $[1, c']_T \rightarrow [1,c]_T$.  

Recall that $W$ acts on $\LLL$ be multiplication.
The following 
lemma states that any orbit in $\LLL / W$ contains at least one 
noncrossing flat.  

\begin{lemma} \cite[Lemma 5.2]{ARR}
\label{conjugate-to-noncrossing}
Every flat is $W$-conjugate to a noncrossing flat.
\end{lemma}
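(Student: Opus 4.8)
The plan is to exhibit, for each flat $X \in \LLL$, an explicit element $w \in W$ with $wX$ noncrossing, and the natural candidate is to push $X$ into a standard parabolic subgroup and then invoke the fact that Coxeter elements of parabolic subgroups are noncrossing. First I would recall, via the Galois correspondence (Theorem~\ref{galois}), that flats correspond bijectively to parabolic subgroups through $X \mapsto W_X$ and $U \mapsto V^U$, so moving $X$ around in its $W$-orbit corresponds to moving the parabolic subgroup $W_X$ around in its conjugacy class. Every parabolic subgroup is by definition $W$-conjugate to a \emph{standard} parabolic $W_I$ for some $I \subseteq S$; say $W_X = w W_I w^{-1}$, equivalently $w^{-1} X = V^{W_I}$ (using that $V^{wUw^{-1}} = w V^U$). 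So after replacing $X$ by $w^{-1}X$ we may assume $X = V^{W_I}$ is the fixed space of a standard parabolic subgroup. It remains to show $V^{W_I}$ is a noncrossing flat.

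For that step I would use the structure of $NC(W) = [1,c]_T$. Choose the simple reflections so that $I \subseteq S = \{s_1, \dots, s_n\}$ and order them so that the Coxeter element $c = s_1 \cdots s_n$ has the reflections of $I$ appearing as an initial segment — more precisely, pick an ordering of $S$ in which the elements of $I$ come first, producing a Coxeter element $c'$ of $W$ with $c' = c_I \cdot c''$ where $c_I$ is a Coxeter element of $W_I$ and this is a $T$-reduced product (lengths add: $\ell_T(c_I) = |I|$ since $c_I$ is a Coxeter element of the rank-$|I|$ group $W_I$, and $\ell_T(c') = n$). Then $c_I \leq_T c'$, so $c_I \in NC(W)$ computed with respect to $c'$, and since $NC(W)$ is independent of the choice of Coxeter element, $c_I$ corresponds (under the Brady--Watt injection) to a noncrossing flat, namely $V^{c_I}$. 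Finally I would check $V^{c_I} = V^{W_I}$: the inclusion $V^{W_I} \subseteq V^{c_I}$ is clear since $c_I \in W_I$; conversely $c_I$ is a Coxeter element of $W_I$, hence has no nonzero fixed vector in the reflection representation of $W_I$, which forces $V^{c_I} = V^{W_I}$ (equivalently $\ell_T(c_I) = |I| = \operatorname{rank}(W_I)$ gives $\dim V^{c_I} = n - |I| = \dim V^{W_I}$, and the inclusion then upgrades to equality). This shows $X$ is $W$-conjugate to the noncrossing flat $V^{c_I}$.

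The main obstacle is the bookkeeping around the choice of Coxeter element: one must be careful that absolute order, the interval $[1,c]_T$, and the Brady--Watt injection are all genuinely insensitive to which Coxeter element is used — but this is exactly the conjugacy-of-Coxeter-elements remark recorded after Definition~\ref{definition-noncrossing-partitions}, so it can be cited rather than reproved. A secondary point worth stating cleanly is that for any subset $I \subseteq S$ one really can reorder $S$ so that $I$ forms an initial segment of a reduced word for \emph{some} Coxeter element; this is immediate since any ordering of $S$ yields a Coxeter element and we are free to list the elements of $I$ first. With those two facts in hand the argument is short, so I would keep the write-up to essentially the three moves above: (1) reduce to $X = V^{W_I}$ via Galois correspondence and conjugacy of $W_X$ to a standard parabolic; (2) realize $c_I \in [1,c']_T$ for a suitable Coxeter element $c'$; (3) identify $V^{c_I} = V^{W_I}$ and conclude it is noncrossing.
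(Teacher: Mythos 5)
Your argument is correct: the reduction via Theorem~\ref{galois} to the case $X=V^{W_I}$ for a standard parabolic $W_I$, the length-additive factorization $c'=c_I c''$ (with $\ell_T(c')=n$, $\ell_T(c_I)\leq |I|$, $\ell_T(c'')\leq n-|I|$ forcing $c_I\leq_T c'$), and the identification $V^{c_I}=V^{W_I}$ all go through, and the only delicate point --- that $V^{c_I}$ is noncrossing with respect to $c'$ rather than the fixed $c$ --- is harmless because the conclusion is only up to $W$-conjugacy, so conjugating by an element carrying $c'$ to $c$ finishes the proof, as you note. The present paper does not reprove this lemma but cites \cite[Lemma 5.2]{ARR}, remarking that the proof there is uniform, and your parabolic-Coxeter-element argument is essentially that same uniform argument.
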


The map $w \mapsto c w c^{-1}$ of conjugation by $c$ gives an automorphism of 
the set $NC(W) = [1, c]_T$.  On the level of noncrossing flats this action translates to
$X \mapsto c X$.  We will need the following technical result about one-dimensional 
flats.  

\begin{lemma} \cite[Lemma 5.3]{ARR}
\label{W-and-C-orbits}
Let $X \in \LLL$ be a one-dimensional noncrossing flat.  The set of noncrossing 
flats in the $W$-orbit of $X$ is precisely the $C$-orbit of $X$, where 
$C$ is the cyclic subgroup of $W$ generated by $c$.
\end{lemma}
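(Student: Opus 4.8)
The containment of the $C$-orbit of $X$ in the set of noncrossing flats lying in the $W$-orbit of $X$ is immediate and uses only material already recalled: conjugation by $c$ is an automorphism of $NC(W)=[1,c]_T$ (as noted just before Lemma~\ref{conjugate-to-noncrossing}), so on the level of noncrossing flats the map $Y\mapsto cY$ preserves the set of noncrossing flats, while $C\le W$ puts the whole $C$-orbit of $X$ inside its $W$-orbit. The substance of the lemma is the reverse containment, and the plan is to obtain it from a counting argument rather than by exhibiting conjugating elements directly.

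First translate from flats to group elements. By the Brady--Watt injection of Definition~\ref{definition-noncrossing-partitions}, the one-dimensional noncrossing flats are exactly the fixed spaces $V^w$ of the \emph{co-atoms} of $NC(W)$, i.e.\ the elements $w\leq_T c$ with $\ell_T(w)=n-1$, and $w$ is recovered from $V^w$; conjugation by $c$ sends $V^w$ to $V^{cwc^{-1}}$. By the Galois correspondence (Theorem~\ref{galois}), the $W$-orbits of one-dimensional flats are indexed by the $W$-conjugacy classes of their isotropy groups $W_{V^w}$, which are parabolic subgroups of rank $n-1$, i.e.\ maximal parabolic subgroups. The proof then rests on three inputs: (a) the $\langle c\rangle$-orbits on one-dimensional noncrossing flats refine the partition of this set induced by $W$-orbits (this is the easy containment above); (b) every one-dimensional flat is $W$-conjugate to a one-dimensional noncrossing flat, which is Lemma~\ref{conjugate-to-noncrossing} specialized to dimension one; and (c) the number of $\langle c\rangle$-orbits on one-dimensional noncrossing flats equals the number of $W$-conjugacy classes of maximal parabolic subgroups. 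Granting (a), (b), (c): by (b) each $W$-orbit of one-dimensional flats meets the set of noncrossing flats, hence contains at least one $\langle c\rangle$-orbit; by (a) distinct $W$-orbits contribute disjoint families of $\langle c\rangle$-orbits; so (c) forces each $W$-orbit to contain exactly one $\langle c\rangle$-orbit of noncrossing flats, which is the assertion.

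The crux is therefore the numerical identity (c). It is convenient to reformulate it: the Kreweras-type anti-automorphism $w\mapsto w^{-1}c$ of $NC(W)$ restricts to a bijection from the co-atoms of $NC(W)$ onto the atoms, equivalently onto the reflection set $T$, and it intertwines conjugation by $c$ on the two sides; hence the number of $\langle c\rangle$-orbits on one-dimensional noncrossing flats equals the number of orbits of $\langle c\rangle$ acting on $T$ by conjugation. So (c) becomes: the number of $\langle c\rangle$-orbits on $T$ equals the number of $W$-conjugacy classes of rank-$(n-1)$ parabolic subgroups of $W$. In type $A_{n-1}$ (i.e.\ $W=\symm_n$) this is transparent: co-atoms of $NC(\symm_n)$ are the two-block noncrossing partitions of $[n]$, sending such a partition to the unordered pair of its block sizes is constant on rotation orbits and separates them, and it matches the rotation orbits bijectively with the classes $\symm_i\times\symm_{n-i}$ of maximal parabolics, both sets having cardinality $\lfloor n/2\rfloor$. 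An entirely analogous direct verification is available in types BCDI using the combinatorial models recalled earlier, and this is all that is needed for the present paper.

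The main obstacle I anticipate is to establish (c) uniformly rather than case by case. The natural framework for this is Bessis's theory of parabolic Coxeter elements: each co-atom $w$ is a Coxeter element of the maximal parabolic $W_{V^w}$, and the interval $[1,w]_T$ is the noncrossing partition lattice of $W_{V^w}$ with Coxeter element $w$; a uniform argument should deduce from the rigidity of Coxeter elements inside parabolic subgroups that the passage `$W$-orbit of $V^w$ $\rightsquigarrow$ $\langle c\rangle$-orbit of $V^w$' loses no information, which is exactly the injectivity underlying (c). Absent such an argument, the identity is checked against the classification of finite irreducible reflection groups, and for the applications in this paper the type-by-type verification in types ABCDI suffices.
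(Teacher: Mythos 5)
Your overall scheme is logically sound, and several of the reductions are correct: the easy inclusion of the $C$-orbit into the set of noncrossing flats in the $W$-orbit; the counting frame (a)+(b)+(c); the identification, via Theorem~\ref{galois}, of $W$-orbits of one-dimensional flats with $W$-conjugacy classes of rank-$(n-1)$ parabolic subgroups; and the transfer of the $\langle c\rangle$-orbit count from coatoms of $NC(W)$ to the reflection set $T$ via the Kreweras complement $w\mapsto w^{-1}c$, which is indeed an anti-automorphism of $[1,c]_T$ commuting with conjugation by $c$. The problem is that the entire content of the lemma now sits in the numerical identity (c), and (c) is not proved: given the easy inclusion and Lemma~\ref{conjugate-to-noncrossing}, statement (c) is \emph{equivalent} to the lemma, so nothing is gained until it is established. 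You verify it only in type A, assert types BCDI as ``entirely analogous'' (type D is not routine: for $n$ even the lines $x_1=\cdots=x_n$ and $x_1=\cdots=x_{n-1}=-x_n$ lie in distinct $W(D_n)$-orbits even though their isotropy groups are both abstractly of type $A_{n-1}$, and correspondingly the reflections involving $\pm n$ form two $c$-orbits, while for $n$ odd they form one; the count does work out, but it has to be done), and you leave the exceptional types and any uniform argument open --- your sketch via ``rigidity of parabolic Coxeter elements'' only restates what must be shown, since conjugacy of two coatoms in $W$ is easy and conjugacy by a power of $c$ is precisely the lemma.

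The fallback claim that a verification in types ABCDI ``suffices for the applications in this paper'' is also incorrect. The lemma is invoked in the proof of Proposition~\ref{dimension-one-proposition}, which is stated and proved for an arbitrary irreducible real reflection group as evidence for the Strong Conjecture, and the paper supplies no new proof of the lemma: it cites \cite[Lemma 5.3]{ARR} and explicitly relies on the fact that the proof given there (like that of Lemma~\ref{conjugate-to-noncrossing}) is uniform, in contrast to your classification-dependent route. So as written there is a genuine gap: to turn your argument into a proof you must either establish (c) uniformly or carry out the case check honestly in every irreducible type, including E$_6$, E$_7$, E$_8$, F$_4$, H$_3$, and H$_4$.
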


The proofs of Lemmas~\ref{conjugate-to-noncrossing} and \ref{W-and-C-orbits} in
\cite{ARR} are both uniform.

\begin{example}
\label{ex:typeA2}
Suppose $W = \symm_n$ has type A$_{n-1}$.  The reflection length
$\ell_T(w)$ of a permutation $w \in W$ equals $n - r(w)$, where $r(w)$ is the number
of cycles in the cycle decomposition of $w$.  The set of maximal elements of the poset
$(W, \leq_T)$ consists precisely of the set of Coxeter elements (i.e., long cycles) in $W$, although
in general type the set of Coxeter elements in $W$ typically forms a proper subset of 
the maximal elements in reflection order.  Identifying the intersection lattice
$\LLL$ with the set partitions of $[n]$ ordered by refinement, the map $W \rightarrow \LLL$
sending $w$ to $V^w$ is a homomorphism of posets $(W, \leq_T) \rightarrow (\LLL, \leq_{\LLL})$.

We choose the Coxeter element $c = (1,2,\dots,n) \in W$.  It can be shown that a permutation
$w \in \symm_n$ is noncrossing with respect to $c$ if and only if each cycle in $w$ 
can be written as an increasing subsequence of $1 , 2 , \dots , n$.  The  group
elements in $NC(W)$ map to flats in $\LLL$ which correspond to the classical noncrossing
partitions of $[n]$.
\end{example}

\subsection{$W$-noncrossing parking functions}
We review the combinatorial generalization in \cite{ARR} of parking functions to arbitrary
type.  

\begin{defn}
\label{k1ncpark}
A {\sf $W$-noncrossing parking function} is an equivalence class in 
\begin{equation}
\{ (w, X) \in W \times \LLL \,:\, \text{$X$ is noncrossing} \} / \sim,
\end{equation}
where $(w, X) \sim (w', X')$ if $X = X'$ and $w W_X = w' W_X$.  We denote by
$\Park^{NC}_W$ the set of $W$-noncrossing parking functions.
Equivalence classes in $\Park^{NC}_W$ are denoted with square brackets, e.g., 
$[w, X]$.
\end{defn} 

The set $\Park^{NC}_W$ an action of $W \times \ZZ_h$ defined by
\begin{equation}
(v,g^d).[w,X] := [vwc^{-d}, c^d X],
\end{equation}
where we use the fact that $c X$ is a noncrossing flat whenever $X$ is.  To check that
this action is well defined, it is useful to note that 
$W_{c^d X} = c^d W_X c^{-d}$.

\begin{example}
\label{ex:typeA3}
Suppose $W = \symm_n$.  Given $w \in W$ and a noncrossing partition $\pi$ with 
corresponding flat $X \in \LLL$, we represent the $W$-noncrossing parking function
$[w, X]$ by drawing the noncrossing partition $\pi$ on the disc $\DD^2$ and
labeling each block $B$ of $\pi$ with the subset $w(B)$ of $[n]$.  

This identifies the set of $W$-noncrossing parking functions in type A$_{n-1}$ with
pairs $(\pi, f)$ where $\pi$ is a noncrossing partition of $[n]$ and $f$ assigns each block
$B$ of $\pi$ to a subset $f(B)$ of $[n]$ such that $|f(B)| = |B|$ and 
$[n] = \biguplus_{B \in \pi} f(B)$.  The group $W = \symm_n$ acts on block labels and
the group $\ZZ_h = \ZZ_n$ acts by $n$-fold rotation.
In Section~\ref{Introduction} we explained how to get a $\symm_n$-equivariant bijection
between the set of pairs $(\pi, f)$ and the classical parking functions 
$\Park_n$.
\end{example}

\subsection{$k$-$W$-noncrossing flats and partitions}  
In his thesis, 
Armstrong \cite{Armstrong} gave a type-uniform generalization of $k$-divisible 
noncrossing partitions
using $k$-element multichains of noncrossing group elements.

\begin{defn} \cite{Armstrong}
\label{definition-fuss-noncrossing-partitions}
The set $NC^k(W)$ 
of {\sf $k$-$W$-noncrossing partitions} consists of all $k$-multichains
$(w_1 \leq_T \dots \leq_T w_k)$ in the poset $NC(W)$.

There is an injection
\begin{align*}
NC^k(W) &\hookrightarrow \LLL^k \\
(w_1 \leq_T \dots \leq_T w_k) &\mapsto (V^{w_1} \leq_{\LLL} \dots \leq_{\LLL} V^{w_k})
\end{align*}
coming from the injection in Definition~\ref{definition-noncrossing-partitions}.   A  $k$-multichain
$(X_1 \leq_{\LLL} \dots \leq_{\LLL} X_k)$ of flats in $\LLL$ will be called a {\sf noncrossing $k$-flat}
if it is in the image of this injection.
\end{defn}

To avoid notational clutter, we will typically omit the subscripts $T$ or $\LLL$ on $\leq$
when writing an element of $NC^k(W)$ or a noncrossing $k$-flat.

The set $NC^k(W)$ can also be interpreted in terms of  certain factorizations of 
the underlying Coxeter element $c$.
We call a sequence $(w_0, w_1, \dots, w_k) \in W^{k+1}$ an {\sf $\ell_T$-additive
factorization of $c$} if $w_0 w_1 \dots w_k = c$ and $\sum_{i = 0}^k \ell_T(w_i) = \ell_T(c)$.
We let $NC_k(W)$ denote the set of $\ell_T$-additive factorizations $(w_0, w_1, \dots, w_k)$
which have length $k+1$.
The maps $\partial$ and $\int$ given below give mutually inverse bijections between
$NC^k(W)$ and $NC_k(W)$.
\begin{align*}
&\partial: &NC^k(W) &\rightarrow NC_k(W) \\
&\partial: &(w_1 \leq \dots \leq w_k) &\mapsto 
(w_1, w_1^{-1} w_2, \dots, w_{k-1}^{-1} w_k, w_k^{-1} c)\\
&\text{$\int$}: &NC_k(W) &\rightarrow NC^k(W) \\
&\text{$\int$}: &(w_0, w_1, \dots, w_k) &\mapsto (w_0 \leq w_0 w_1 \leq \dots \leq w_0 w_1 \dots w_{k-1}).
\end{align*}

The cyclic group $\ZZ_{kh}$ acts on 
$NC_k(W)$ via
\begin{equation*}
g.(w_0, w_1 \dots, w_k) = (v, c w_k c^{-1}, w_1, w_2, \dots, w_{k-1}), 
\end{equation*}
where
$v = (c w_k c^{-1}) w_0 (c w_k c^{-1})^{-1}$ \cite{Armstrong}.  The map $\int$ transfers 
this action to an action of $\ZZ_{kh}$ on $NC^k(W)$.  In turn, the map in 
Definition~\ref{definition-fuss-noncrossing-partitions} transfers this action to an action on
noncrossing $k$-flats.  The following lemma will help us get a well-defined action of
$\ZZ_{kh}$ on noncrossing parking functions.

\begin{lemma}
\label{lemma-first-component}
Suppose that $(X_1 \leq \dots \leq X_k)$ is a noncrossing $k$-flat and that 
$(Y_1 \leq \dots \leq Y_k) = g.(X_1 \leq \dots \leq X_k)$ is its image under the 
distinguished generator $g$ of $\ZZ_{kh}$.  Then $Y_1 = c w_k^{-1} X_1$, where
$w_k \in NC(W)$ is such that $X_k = V^{w_k}$.  
\end{lemma}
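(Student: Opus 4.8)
The plan is to trace through the definitions of the $\ZZ_{kh}$-action chain by chain, computing the first component at each stage, since $Y_1$ depends only on the first component of the factorization $g.\partial(X_1 \leq \cdots \leq X_k)$. First I would pass from the noncrossing $k$-flat $(X_1 \leq \cdots \leq X_k)$ to the corresponding multichain $(w_1 \leq_T \cdots \leq_T w_k)$ in $NC(W)$, using $X_i = V^{w_i}$, and then apply $\partial$ to obtain the $\ell_T$-additive factorization $(w_0, w_1', \dots, w_k')$ where $w_0 = w_1$, $w_i' = w_{i-1}^{-1} w_i$ for $1 \leq i \leq k-1$, and $w_k' = w_k^{-1} c$. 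Applying the generator $g$ of $\ZZ_{kh}$ to this factorization yields, by the displayed formula, a new factorization whose zeroth component is $v = (c w_k' c^{-1}) w_0 (c w_k' c^{-1})^{-1}$, i.e.\ $v = (c w_k^{-1} c^{-1} \cdot c) w_1 (c w_k^{-1} c^{-1} \cdot c)^{-1} = (c w_k^{-1}) w_1 (c w_k^{-1})^{-1}$ after simplifying $c w_k' c^{-1} = c w_k^{-1} c^{-1} c = c w_k^{-1}$.

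Next I would apply $\int$ to recover the image multichain in $NC^k(W)$: its first element is simply the zeroth component $v$ of the factorization, so the first group element of $g.(w_1 \leq \cdots \leq w_k)$ is $v = (c w_k^{-1}) w_1 (c w_k^{-1})^{-1}$. Translating back to flats via $X \mapsto V^{w}$ and using the identity $V^{u w u^{-1}} = u \cdot V^w$ for the $W$-action on $\LLL$, I get
\begin{equation*}
Y_1 = V^{v} = V^{(c w_k^{-1}) w_1 (c w_k^{-1})^{-1}} = (c w_k^{-1}) \cdot V^{w_1} = c w_k^{-1} X_1,
\end{equation*}
which is exactly the claimed formula. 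The only subtlety is bookkeeping: one must be careful that $X_k = V^{w_k}$ determines $w_k$ uniquely as an element of $NC(W)$ via the Brady--Watt injection of Definition~\ref{definition-noncrossing-partitions}, so the statement ``$w_k \in NC(W)$ is such that $X_k = V^{w_k}$'' is unambiguous, and that the group generator acting on $NC^k(W)$ is transported faithfully through the bijections $\partial$, $\int$ and the flat injection, which is already established in \cite{Armstrong}.

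I expect the main (and essentially only) obstacle to be the algebraic simplification $c w_k' c^{-1} = c w_k^{-1}$ together with tracking exactly which component of which intermediate object becomes $Y_1$; neither is deep, but it is easy to slip on an inverse or a conjugation. Once the formula $v = (c w_k^{-1}) w_1 (c w_k^{-1})^{-1}$ is in hand, the conjugation-to-translation identity $V^{u w u^{-1}} = u V^w$ finishes the proof immediately. No case analysis or type-specific input is needed; the argument is entirely uniform.
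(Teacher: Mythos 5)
Your proposal is correct and follows essentially the same route as the paper: convert the flat chain to a multichain in $NC(W)$, apply $\partial$, use the explicit formula for $g$ on $NC_k(W)$ to compute the zeroth component $(c w_k^{-1}) w_1 (c w_k^{-1})^{-1}$, apply $\int$, and conclude via $V^{u w u^{-1}} = u V^w$. The only blemish is a harmless indexing slip in your description of the middle components of $\partial$ (they should be $w_i^{-1} w_{i+1}$), which plays no role since only the zeroth and last components enter the computation.
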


\begin{proof}
For $1 \leq i \leq k$, let $w_i \in NC(W)$ be the unique
noncrossing group element such that $X_i = V^{w_i}$.  Then the noncrossing $k$-flat
$(X_1 \leq \dots \leq X_k)$ corresponds to the element
$(w_1 \leq w_2 \leq  \dots \leq w_k) \in NC^k(W)$ under the map in 
Definition~\ref{definition-fuss-noncrossing-partitions}.  The image of this element
of $NC^k(W)$ under the map $\partial$ is
$(w_1, w_1^{-1} w_2, \dots, w_{k-1}^{-1} w_k, w_k^{-1} c)$.  The image of this latter
element of $NC_k(W)$ under $g$ is 
$( (c w_k^{-1} c  c^{-1} ) w_1 (c w_k^{-1} c c^{-1})^{-1}, c w_k^{-1} c c^{-1}, w_1^{-1} w_2, 
w_2^{-1} w_3, \dots, w_{k-1}^{-1} w_k)$.  The image $(v_1 \leq \dots \leq v_k)$ of 
this under $\int$ has first component 
$v_1 = (c w_k^{-1}) w_1 (c w_k^{-1})^{-1}$.  The fixed space 
$Y_1 = V^{v_1}$ is therefore equal to $c w_k^{-1} V^{w_1} = c w_k^{-1} X_1$.
\end{proof}

While the action of $g$ on the set of noncrossing $k$-flats has a somewhat involved
definition, the $k^{th}$ power of $g$ acts on noncrossing $k$-flats by translation by $c$:
\begin{equation}
\label{kthpower}
g^k.(X_1 \leq \dots \leq X_k) = (c X_1 \leq \dots \leq c X_k).
\end{equation}
To verify Equation~\ref{kthpower}, one can check that $g^k$ acts by 
componentwise conjugation by $c$ on $NC_k(W)$.

\begin{example}
\label{ex:typeA4}
Suppose $W = \symm_n$ has type A$_{n-1}$.  Then we can identify noncrossing $k$-flats
$(X_1 \leq \dots \leq X_k)$ with $k$-multichains of noncrossing partitions of $[n]$
under refinement order.  The map $\nabla$ of Lemma~\ref{classical-noncrossing}
 bijects these with $k$-divisible noncrossing partitions
of $[kn]$.  Embedding these partitions in the disc $\DD^2$, the group $\ZZ_{kh} = \ZZ_{kn}$ acts
by $kn$-fold rotation.  
\end{example}

\section{Definitions and Main Conjecture}
\label{Definitions and Main Conjecture}

\subsection{The $k$-$W$-noncrossing parking space} 
We present our Fuss analog of Definition~\ref{k1ncpark}.

\begin{defn}
\label{noncrossing-definition}
Let $W$ be a reflection group and let $k \geq 1$.  A {\sf $k$-$W$-noncrossing parking 
function} is an equivalence class in
\begin{equation}
\{ (w, X_1 \leq \dots \leq X_k) \,:\, \text{$w \in W$, $X_1 \leq \dots \leq X_k$ a
noncrossing $k$-flat} \} / \sim,
\end{equation}
where $(w, X_1 \leq \dots \leq X_k) \sim (w', X_1' \leq \dots \leq X_k')$
 if and only if $X_i = X_i'$ for all $i$ and $w W_{X_1} = w' W_{X_1}$.
 
The set of $k$-$W$-noncrossing parking functions is denoted $\Park^{NC}_W(k)$.
\end{defn} 

If $X_1 \leq \dots \leq X_k$ is any multichain of flats in $\LLL$, we have a
corresponding multichain of isotropy subgroups 
$W_{X_1} \leq \dots \leq W_{X_k}$ of $W$.  It follows that the equivalence relation
$\sim$ in Definition~\ref{noncrossing-definition} could also have been expressed as 
$(w, X_1 \leq \dots \leq X_k) \sim (w', X_1' \leq \dots \leq X_k')$ 
if and only if $X_i = X_i'$ and $w W_{X_i} = w' W_{X_i}$ for all $i$.  We will use square brackets
to denote equivalence classes, so that the $k$-$W$-noncrossing parking function
containing $(w, X_1 \leq \dots \leq X_k)$ is written
$[w, X_1 \leq \dots \leq X_k]$.  We may also use exponential notation to denote multichains,
e.g. $X^3Y^2 = X \leq X \leq X \leq Y \leq Y$.

We endow the set $\Park^{NC}_W(k)$ with an action of $W \times \ZZ_{kh}$.  The action 
of $W$ is straightforward:  given $v \in W$ and $[w, X_1 \leq \dots \leq X_k]$, set
\begin{equation}
v.[w, X_1 \leq \dots \leq X_k] := [vw, X_1 \leq \dots \leq X_k].
\end{equation}
For a fixed noncrossing $k$-flat $X_1 \leq \dots \leq X_k$, the action of 
$W$ on the set of $k$-$W$-noncrossing parking functions of the form 
$[w, X_1 \leq \dots \leq X_k]$ is isomorphic to the coset representation
$W / W_{X_1} = {\bf 1}_{W_{X_1}}^W$ of $W$.  It follows that
\begin{equation}
\Park^{NC}_W(k) \cong_W \bigoplus_{X} m_X {\bf 1}_{W_X}^W, 
\end{equation}
where the direct sum is over all noncrossing flats $X \in \LLL$ and the multiplicity $m_X$ equals
the number of noncrossing $k$-flats $X_1 \leq \dots \leq X_k$ with $X_1 = X$.

The action of $\ZZ_{kh}$ on $\Park^{NC}_W(k)$ is more involved.  
Section 2.4 gives an action of $g$ on the set of 
noncrossing $k$-flats, denoted $X_1 \leq \dots \leq X_k \mapsto g.(X_1 \leq \dots \leq X_k)$.
Moreover, if $X_1 \leq \dots \leq X_k$ is a noncrossing $k$-flat, Section 2.3 implies that 
there exists a unique noncrossing group element $u_k \in W$ such that $X_k = V^{u_k}$.
We define the action of $g$ on the element $[w, X_1 \leq \dots \leq X_k]$ of $\Park^{NC}_W(k)$
by
\begin{equation}
g.[w, X_1 \leq \dots \leq X_k] := [w u_k c^{-1}, g.(X_1 \leq \dots \leq X_k)].
\end{equation}

\begin{proposition}
\label{well-defined}
The above action of $\ZZ_{kh}$ on $\Park^{NC}_W(k)$ is well defined and commutes
with the action of $W$, giving $\Park^{NC}_W(k)$ the structure of a 
$W \times \ZZ_{kh}$-module.
\end{proposition}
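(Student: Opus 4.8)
The plan is to verify three things: (i) the formula for $g.[w, X_1 \leq \dots \leq X_k]$ does not depend on the choice of representative $(w, X_1 \leq \dots \leq X_k)$ of the equivalence class; (ii) $g$ has order dividing $kh$ on $\Park^{NC}_W(k)$; and (iii) the resulting $\ZZ_{kh}$-action commutes with the $W$-action. For (i), suppose $(w', X_1 \leq \dots \leq X_k) \sim (w, X_1 \leq \dots \leq X_k)$, i.e.\ $w' \in w W_{X_1}$. Since $u_k \in NC(W)$ is determined by the flat $X_k$ alone (via $X_k = V^{u_k}$), the element $u_k c^{-1}$ appearing in the definition is the same for both representatives, so I must check that $w' u_k c^{-1}$ and $w u_k c^{-1}$ lie in the same coset of $W_{Y_1}$, where $(Y_1 \leq \dots \leq Y_k) = g.(X_1 \leq \dots \leq X_k)$. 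By Lemma~\ref{lemma-first-component} we have $Y_1 = c u_k^{-1} X_1$, hence $W_{Y_1} = c u_k^{-1} W_{X_1} u_k c^{-1}$. Writing $w' = w x$ with $x \in W_{X_1}$, we get $w' u_k c^{-1} = w u_k c^{-1} (c u_k^{-1} x u_k c^{-1})$, and the parenthesized factor lies in $W_{Y_1}$ by the conjugation formula just noted; this gives (i).

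For (iii): the $W$-action is $v.[w, X_\bullet] = [vw, X_\bullet]$, which leaves the flat-chain untouched, while the $g$-action right-multiplies $w$ by $u_k c^{-1}$ and transforms the flat-chain. Since left multiplication by $v$ and right multiplication by $u_k c^{-1}$ commute on $W$, and the flat-chain transformation in the $g$-action is independent of $w$, we get $g.(v.[w, X_\bullet]) = [v w u_k c^{-1}, g.(X_\bullet)] = v.(g.[w, X_\bullet])$ directly; iterating gives commutation of $W$ with all of $\ZZ_{kh}$, so we only need to know $g$ generates a $\ZZ_{kh}$-action, i.e.\ (ii).

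For (ii) I would use Equation~\eqref{kthpower}, which asserts $g^k.(X_1 \leq \dots \leq X_k) = (c X_1 \leq \dots \leq c X_k)$, together with a careful bookkeeping of the right-multiplication factors accumulated over $kh$ applications of $g$. Concretely, if $g$ sends $[w, X_\bullet]$ to $[w u_k c^{-1}, g.(X_\bullet)]$, then applying $g$ a second time multiplies on the right by $u_k' c^{-1}$ where $V^{u_k'} = Y_k$ is the top flat of $g.(X_\bullet)$; I would track that after $k$ steps the net right factor collapses (using that the $g$-action on $NC_k(W)$ cyclically permutes the factorization components and that $\ell_T$-additive factorizations multiply to $c$) to conjugation data compatible with $g^k.(X_\bullet) = c X_\bullet$, so that $g^k$ acts on parking functions by $[w, X_\bullet] \mapsto [w c^{-1} \cdot(\text{something fixing the cosets}), c X_\bullet]$ — matching the natural ``translate by $c$'' action. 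Then $g^{kh}$ translates the flat-chain by $c^h = 1$ and accumulates a net right factor of (a conjugate of) $c^{-h} = 1$ up to an element of the relevant isotropy subgroup, so $g^{kh}$ is the identity. The main obstacle will be (ii): making the telescoping of the accumulated right-multiplication factors over $k$ (and then $h$) steps precise, since the factor introduced at each step depends on the top flat of the \emph{current} chain rather than a fixed element, so one really has to use the explicit description of the $\ZZ_{kh}$-action on $NC_k(W)$ from Section 2.4 and the factorization $w_0 w_1 \cdots w_k = c$ to see the cancellation. Parts (i) and (iii) are routine coset bookkeeping once Lemma~\ref{lemma-first-component} is in hand.
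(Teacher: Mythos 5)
Your proposal follows essentially the same route as the paper's proof: well-definedness via Lemma~\ref{lemma-first-component} and the conjugation $W_{Y_1} = c u_k^{-1} W_{X_1} u_k c^{-1}$, the trivial commutation with $W$, and the reduction of (ii) to showing $g^k.[w, X_1 \leq \dots \leq X_k] = [wc^{-1}, cX_1 \leq \dots \leq cX_k]$ by telescoping on $NC_k(W)$. The telescoping you flag as the main obstacle is exactly the chain of equalities carried out in the paper, where after passing to the factorization $(v_0, v_1, \dots, v_k)$ with $v_0 v_1 \cdots v_k = c$ the accumulated right factors collapse step by step to $v_0 c^{-1}$, and the residual element $v_0 = u_1$ lies in $W_{X_1}$, so your ``something fixing the cosets'' is precisely this.
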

\begin{proof}
We first check that $g.[w, X_1 \leq \dots \leq X_k]$ does not depend on
the representative of $[w, X_1 \leq \dots \leq X_k]$.
Suppose that $[w, X_1 \leq \dots \leq X_k] = [w', X_1 \leq \dots \leq X_k]$ for 
$w, w' \in W$.  Then we have that $w W_{X_1} = w' W_{X_1}$.  If we write
$X_k = V^{u_k}$ for $u_k \in W$ noncrossing, Lemma~\ref{lemma-first-component} implies
that $Y_1 = c u_k^{-1} X_1$, where 
$g.(X_1 \leq \dots \leq X_k) = (Y_1 \leq \dots \leq Y_k)$.  It follows that 
$W_{Y_1} = c u_k^{-1} W_{X_1} u_k c^{-1}$, so that 
$w u_k c^{-1} W_{Y_1} = w' u_k c^{-1} W_{Y_1}$.  This proves that the element
$[wu_k c^{-1}, X_1 \leq \dots \leq X_k] \in \Park^{NC}_W(k)$ does not depend on the representative 
of the equivalence class
$[w, X_1 \leq \dots \leq X_k]$.  So, the element $g.[w, X_1 \leq \dots \leq X_k] \in \Park^{NC}_W(k)$ 
is well defined.

We show next that $g^{kh}$ fixes every element of $\Park^{NC}_W(k)$.  
Fix an element $[w, X_1 \leq \dots \leq X_k] \in \Park^{NC}_W(k)$.
We will prove
the stronger statement that 
\begin{equation}
g^k . [w, X_1 \leq \dots \leq X_k] = [w c^{-1}, c X_1 \leq \dots \leq c X_k].
\end{equation}
Since $c$ has order $h$, this implies that $g^{kh}$ fixes every element of $\Park^{NC}_W(k)$, 
so that $g$ induces an action of $\ZZ_{kh}$ on $\Park^{NC}_W(k)$.  To reduce clutter, 
we use left exponential notation for  conjugation within $W$, viz. $\leftexp{a}{b} = a b a^{-1}$.

For $1 \leq i \leq k$, let $u_i \in W$ be the unique noncrossing group element so that
$X_i = V^{u_i}$.  Also, let $(v_0, v_1, \dots, v_k)$ be the image of 
$(u_1, \dots, u_k)$ under $\partial$, so that $v_0 v_1 \cdots v_k$ is an $\ell_T$-reduced
factorization of $c$.  To make our argument more transparent, we will replace the
noncrossing $k$-flats in the second position of parking functions in $\Park^{NC}_W(k)$ with
elements of $NC_k(W)$.  We have the following chain of 
equalities:

\begin{align*}
g^k . [w, (v_0, v_1, \dots, v_k)] &= 
g^{k-1} . [w v_0 v_1 \cdots v_{k-1} c^{-1}, (\leftexp{c v_k c^{-1}}{v_0}, \leftexp{c}{v_k}, v_1, v_2, \dots,
v_{k-1})]  \\
&= g^{k-2} . [w v_0 v_1 \cdots v_{k-1} c^{-1} (\leftexp{c v_k c^{-1}}{v_0}) (\leftexp{c}{v_k})
v_1 v_2 \cdots v_{k-2} c^{-1}, g^2.(v_0, \dots, v_k)] \\
&= g^{k-2} . [w v_0 v_1 \cdots v_{k-2} c^{-1}, g^2.(v_0, \dots, v_k)] \\
& \vdots \\
&= [w v_0 c^{-1}, g^k.(v_0, \dots, v_k)] \\
&= [w v_0 c^{-1}, (\leftexp{c}{v_0}, \dots, \leftexp{c}{v_k})].
\end{align*}

The first and second equalities follow from the definition of the action of $g$.  The third equality
is a computation within the group $W$ and follows from the fact that $v_0 v_1 \cdots v_k = c$.
In the equalities contained in the ellipses, each successive action of $g$ replaces 
the group element of the form $w v_0 v_1 \cdots v_i c^{-1}$ with $w v_0 v_1 \cdots v_{i-1} c^{-1}$.
The final equality is Equation~\ref{kthpower} on the level of $NC_k(W)$.

It remains to show that 
$[w v_0 c^{-1}, c X_1 \leq \dots \leq c X_k] = [w  c^{-1}, c X_1 \leq \dots \leq c X_k]$.
This is equivalent to the coset equality
$w v_0 c^{-1} W_{c X_1} = w  c^{-1} W_{c X_1}$.  Using the fact that 
$W_{c X_1} = c W_{X_1} c^{-1}$, we are reduced to proving the equality
$v_0 W_{X_1} =  W_{X_1}$.  But this last equality is trivial because 
$X_1 = V^{u_1}$ and $u_i = v_0 v_1 \cdots v_{i-1}$ for all $i$, so $u_1 = v_0$. 
This completes the proof of the formula 
$g^k . [w, X_1 \leq \dots \leq X_k] = [w c^{-1}, c X_1 \leq \dots \leq c X_k]$, and the 
fact that $g$ induces an action of $\ZZ_{kh}$ on $\Park^{NC}_W(k)$.

The only statement left to prove is that the actions of $W$ and $\ZZ_{kh}$ on $\Park^{NC}_W(k)$
commute.  This is obvious from the definitions.
\end{proof}

 \begin{figure}
\includegraphics[scale=0.6]{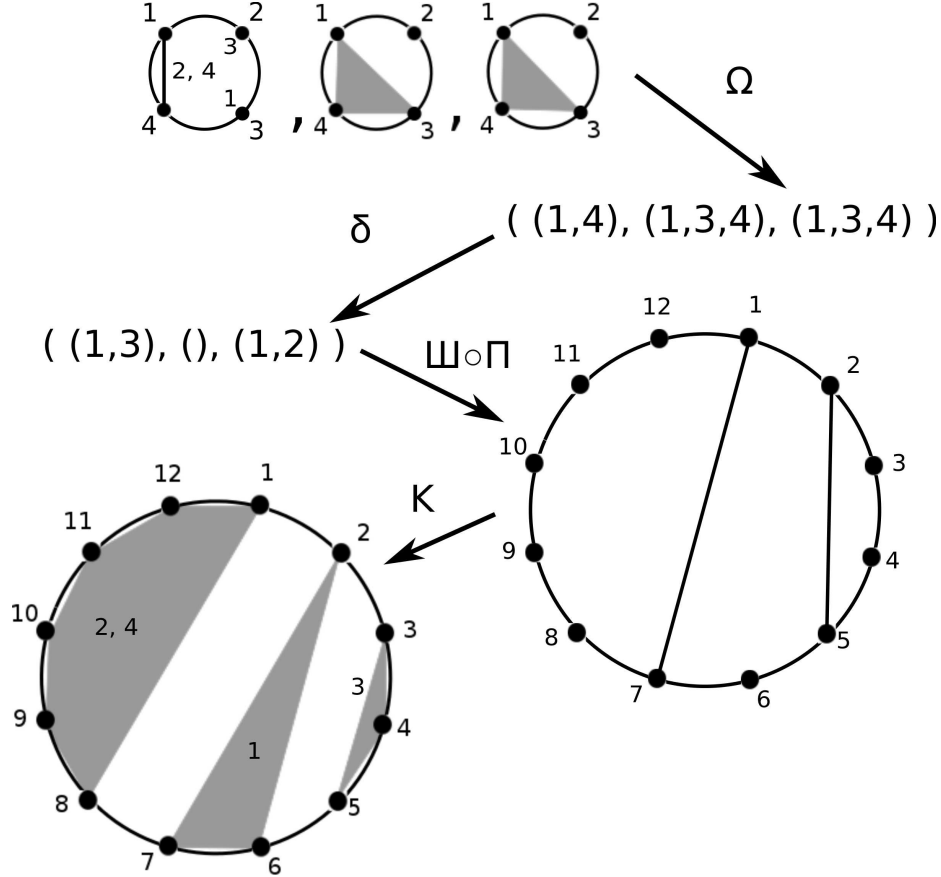}
\caption{A type $k$-$W$-noncrossing parking function for $k = 3$ and $W = \symm_4$}
 \label{fig:Afuss}
\end{figure}

\begin{example}
\label{ex:AkNC}
Suppose $W = \symm_n$ with Coxeter element $c = (1,2 ,\dots, n)$.  
The map $\nabla$ of Lemma~\ref{classical-noncrossing} can be embellished to give a visual
interpretation
of $\Park^{NC}_W(k)$ on $\DD^2$.  Given a $k$-$W$-noncrossing
parking function $[w, X_1 \leq \dots \leq X_k]$, we let $\pi_i$ be the noncrossing 
partition of $[n]$ associated with $X_i$.  We represent this parking function as a sequence
$\pi_1, \dots, \pi_k$ of noncrossing partitions of $[n]$, where the block $B$ of $\pi_1$ is 
labeled with the subset $w(B)$ of $[n]$.
Applying the map $\nabla$ to the sequence $\pi_1 , \dots , \pi_k$ of noncrossing
partitions yields a $k$-divisible noncrossing partition of $[kn]$.  By the last sentence of
Lemma~\ref{classical-noncrossing}, we have that the resriction of 
$\nabla(\pi_1 , \dots , \pi_k)$ to $\{1, k+1, \dots, (n-1)k+1\}$ equals
$\pi_1(\{1, k+1, \dots, (n-1)k+1 \} )$.  We label the block 
of $\nabla(\pi_1 \leq \dots \leq \pi_k)$
equal to the image of $B$
under $i \mapsto (i-1)k + 1$ with $w(B)$.

Figure~\ref{fig:Afuss} demonstrates this procedure when $n = 4$ and $k = 3$.  We start 
with the $k$-$W$-noncrossing parking function $[(1,2,3), \{1,4/2/3 \} \leq \{1,3,4 / 2\} \leq \{1,3,4 / 2\}]$.
Applying the map $\nabla = K \circ \shuffle \circ \Pi \circ \delta \circ \Omega$ yields
$\{ 1,8,9,10,11,12 / 2,6,7 / 3,4,5 \}$.  We remember the labels, labelling 
$\{1,8,9,10,11,12\}$ with $\{2,4\}$, $\{2,6,7\}$ with $\{1\}$, and $\{3,4,5\}$ with $\{3\}$.
Lemma~\ref{classical-noncrossing} guarantees that this remembering is a well defined
procedure.

The visualizations obtained by this procedure are the pairs $(\pi, f)$ where $\pi$ is a 
$k$-divisible noncrossing partition of $[kn]$ and $f$ maps each block $B$ of $\pi$
to a subset $f(B)$ of $[n]$ in such a way that $|f(B)| = k |B|$ and $[n] = \biguplus_{B \in \pi} f(B)$.
The group $W = \symm_n$ acts on these pairs by permuting the labels and the group 
$\ZZ_{kh} = \ZZ_{kn}$
acts by $kn$-fold rotation.

\end{example}

\subsection{The algebraic $k$-$W$-parking space}  The construction of the algebraic
$k$-$W$-parking space is very similar to the $k = 1$ case in \cite{ARR}.

Let $\theta_1, \dots, \theta_n$ be a hsop of degree $kh+1$ inside $\CC[V]$
carrying $V^*$ and let
$(\Theta)$ be the ideal in $\CC[V]$ generated by $\theta_1, \dots, \theta_n$.  
The algebraic parking space is obtained by deforming the ideal $(\Theta)$.  Since
the hsop $\theta_1, \dots, \theta_n$ carries $V^*$, there exists a basis $x_1, \dots, x_n$
of $V^*$ such that the linear map induced by $x_i \mapsto \theta_i$ is $W$-equivariant.

\begin{defn}
\label{algebraic}
Let $\theta_1, \dots, \theta_n$ and $x_1, \dots, x_n$ be as above.  The 
{\sf algebraic $k$-$W$-parking space} is the quotient
\begin{equation}
\Park^{alg}_W(k) := \CC[V] / (\Theta - \xx),
\end{equation}
where $(\Theta - \xx)$ is the ideal in $\CC[V]$ generated by
$\theta_1 - x_1, \dots, \theta_n - x_n$.
\end{defn}

If $W$ 
acts on $\CC[V]$ by linear substitutions and $\ZZ_{kh}$ scales by $\omega^d$ in degree 
$d$, the ideal
$(\Theta - \xx)$ is still $W \times \ZZ_{kh}$-stable, so that 
$\Park^{alg}_W(k)$ is a $W \times \ZZ_{kh}$-module.
The deformation $(\Theta) \leadsto (\Theta - \xx)$ of defining ideals is not so severe as to
affect the $W \times \ZZ_{kh}$-module structure.

\begin{proposition}
\label{deformation}
Preserve the notation of Definition~\ref{algebraic}.
There is a $W \times \ZZ_{kh}$-module isomorphism 
\begin{equation*}
\CC[V] / (\Theta) \cong_{W \times \ZZ_{kh}} \CC[V] / (\Theta - \xx) = \Park^{alg}_W(k).
\end{equation*}
\end{proposition}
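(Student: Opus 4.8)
The plan is to exhibit a filtration argument that interpolates between the homogeneous ideal $(\Theta)$ and the inhomogeneous ideal $(\Theta - \xx)$, showing that the associated graded of $\CC[V]/(\Theta - \xx)$ with respect to the natural (polynomial-degree) filtration is $\CC[V]/(\Theta)$, and then invoking the semisimplicity of $\CC[W \times \ZZ_{kh}]$ to lift the associated-graded isomorphism to an honest module isomorphism. Concretely, equip $\CC[V]$ with its polynomial grading and let $F_{\leq m}$ denote the span of polynomials of degree $\leq m$; this induces a filtration on $Q := \CC[V]/(\Theta - \xx)$. Since each $\theta_i - x_i$ has top-degree part $\theta_i$ (degree $kh+1 > 1$), the ideal $(\Theta - \xx)$ has initial ideal (leading forms) exactly $(\Theta)$: indeed the $\theta_i$ form a regular sequence because their common zero locus is $\{0\}$, so $\CC[V]$ is a free module over $\CC[\theta_1, \dots, \theta_n]$, and from this one checks that $\mathrm{gr}\,Q \cong \CC[V]/(\Theta)$ as graded rings. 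The $W$-action is by degree-preserving linear substitutions, so it is compatible with the filtration and descends to $\mathrm{gr}$; the $\ZZ_{kh}$-action on $Q$ needs a word of care since it is not defined by scaling on the inhomogeneous quotient in the obvious way — but one observes that on the filtered pieces $F_{\leq m}/F_{\leq m-1}$ the induced $\ZZ_{kh}$-action is precisely scaling by $\omega^m$, matching the homogeneous situation, so $\mathrm{gr}\,Q \cong \CC[V]/(\Theta)$ as $W \times \ZZ_{kh}$-modules.

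The second half is a general principle: if $M$ is a filtered module over a finite group algebra $\CC[G]$ (here $G = W \times \ZZ_{kh}$) that is finite-dimensional, then $M \cong \mathrm{gr}\,M$ as $\CC[G]$-modules. This follows from Maschke's theorem — each short exact sequence $0 \to F_{\leq m-1} \to F_{\leq m} \to F_{\leq m}/F_{\leq m-1} \to 0$ of $\CC[G]$-modules splits, so inductively $F_{\leq m} \cong \bigoplus_{j \leq m} F_{\leq j}/F_{\leq j-1}$, and passing to the top of the filtration gives $M \cong \mathrm{gr}\,M$. One must first check that $Q = \CC[V]/(\Theta - \xx)$ is finite-dimensional over $\CC$: this is because $\mathrm{gr}\,Q \cong \CC[V]/(\Theta)$ is finite-dimensional by Haiman's result cited in the excerpt (equivalently, by the regular-sequence property the quotient by a length-$n$ regular sequence of an $n$-dimensional polynomial ring is Artinian). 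Combining the two halves yields $\CC[V]/(\Theta - \xx) \cong_{W \times \ZZ_{kh}} \mathrm{gr}\, Q \cong_{W \times \ZZ_{kh}} \CC[V]/(\Theta)$, which is the claim.

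I expect the main obstacle to be the careful identification of the $\ZZ_{kh}$-action on the associated graded of the inhomogeneous quotient, making sure that the action defined on $\CC[V]/(\Theta - \xx)$ really does restrict on $F_{\leq m}/F_{\leq m-1}$ to scalar multiplication by $\omega^m$ (and in particular that $g$ genuinely acts, i.e. $g^{kh}$ acts trivially, which is implicit in calling $Q$ a $\ZZ_{kh}$-module but should be made consistent with the filtration). The rest — the regular-sequence facts, the initial-ideal computation, and the Maschke splitting — is routine, and in the type $A$, $k=1$ analog this argument appears in \cite{ARR}; the Fuss generalization changes only the degree of the hsop from $h+1$ to $kh+1$, which does not affect any step. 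An alternative, if one prefers to avoid the associated-graded formalism, is the deformation-to-the-origin argument: consider the family of ideals $(\Theta - t\xx)$ for $t \in \CC$, observe that the quotient has locally constant dimension (by the same regular-sequence/flatness argument) and carries a $W \times \ZZ_{kh}$-action for all $t$ after a suitable rescaling identifying the $t \neq 0$ fibers, then conclude the fibers over $t = 0$ and $t = 1$ are isomorphic as $W \times \ZZ_{kh}$-modules by rigidity of representations of finite groups in characteristic zero.
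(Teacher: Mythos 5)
Your argument is correct and is essentially the proof the paper invokes: Proposition~\ref{deformation} is proved by citing \cite[Proposition 2.11]{ARR} with $h+1$ replaced by $kh+1$, and that proof is exactly your route — filter $\CC[V]/(\Theta-\xx)$ by polynomial degree, note the leading-form ideal is $(\Theta)$ because the $\theta_i$ form a regular sequence (so the dimensions agree and the graded surjection $\CC[V]/(\Theta)\twoheadrightarrow \mathrm{gr}$ is an isomorphism), and use semisimplicity of $\CC[W\times\ZZ_{kh}]$ to identify the filtered module with its associated graded. The only cosmetic point is that the $\ZZ_{kh}$-action on the inhomogeneous quotient needs no special care: each generator $\theta_i-x_i$ is an $\omega$-eigenvector for the scaling action, so the ideal is stable and the action descends directly.
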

\begin{proof}
The proof of this result can be obtained from the proof of \cite[Proposition 2.11]{ARR} word-for-word
by replacing $h+1$ by $kh+1$ everywhere.
\end{proof}

Proposition~\ref{deformation} shows that the deformation
$(\Theta) \leadsto (\Theta - \xx)$ is not `too big'.  We want this deformation
to be `big enough' to make $\Park^{alg}_W(k)$ a `combinatorial' 
$W \times \ZZ_{kh}$-module, i.e., a $W \times \ZZ_{kh}$-permutation representation.

To make this goal precise, for any point $v \in V$ with coordinates $(v_1, \dots, v_n) \in \CC^n$,
let $\mathfrak{m}_v \subseteq \CC[V]$ denote the maximal ideal generated by
$x_1 - v_1, \dots, x_n - v_n$.  Given any ideal $I \subseteq \CC[V]$ such that 
$\CC[V] / I$ is finite dimensional and any point $v$ in the variety $\mathcal{Z}(I)$ cut out by $I$, 
 the {\sf multiplicity} of $v$ is the dimension of the space $\CC[V] / I_v$, where
$I_v$ is the $\mathfrak{m}_v$-primary component of $I$.  In particular, if 
$\theta_1, \dots, \theta_n$ is as above, the variety $\mathcal{Z}(\Theta)$ consists of 
one point at the origin of $(kh+1)^n$.

When $\theta_1, \dots, \theta_n$ and $x_1, \dots, x_n$ are as above, let 
$V^{\Theta}$ denote the subvariety $\mathcal{Z}(\Theta - \xx)$ of $V$ cut out 
by the ideal $(\Theta - \xx)$.  The superscript notation reflects the fact that 
$V^{\Theta}$ is the fixed point set of the 
degree $kh+1$ polynomial
map $\Theta: V \rightarrow V$ which
sends a point with coordinates $(x_1, \dots, x_n)$ to the point with 
coordinates $(\theta_1, \dots, \theta_n)$.  Since the ideal $(\Theta - \xx)$ is 
$W \times \ZZ_{kh}$-stable, the set $V^{\Theta}$ is 
stable under the action of $W \times \ZZ_{kh}$ on $V$.

The Strong Conjecture of this paper
asserts that $V^{\Theta}$ consists of $(kh+1)^n$ distinct points of multiplicity one.  That is,
the deformation $(\Theta) \leadsto (\Theta - \xx)$ blows apart the high multiplicity zero at the
origin and yields a collection of multiplicity one zeros.  In other words, 
the $W \times \ZZ_{kh}$-module $\Park^{alg}_W(k) \cong V^{\Theta}$ is naturally a 
$W \times \ZZ_{kh}$-permutation
module.  
\footnote{Strictly speaking, this would prove that $\Park^{alg}_W(k)$ is isomorphic 
to the {\em dual} of $V^{\Theta}$, but permutation representations are self-dual.}
By Proposition~\ref{deformation}, the character of $\Park^{alg}_W(k)$ 
is given by Equation~\ref{hsopchar}.  The Strong Conjecture would therefore imply 
that the character of $V^{\Theta}$ is also given by Equation~\ref{hsopchar}.

To gain intuition for the construction of $\Park^{alg}_W(k)$, we examine the case where
$W$ has rank $1$.

\begin{example}
\label{rank1}
Suppose that $W = \{1, s \}$ has rank $1$, so that $h = 2$.  Then $V = \CC$ so that we can identify
$\CC[V] \cong \CC[x]$ and $\{ x \}$ is a basis for $V^*$.  
The nonidentity element of $W$ acts by $s.x = -x$.
Any hsop of degree $2k + 1$ has 
the form $\theta = \alpha x^{2k+1}$ for some $\alpha \in \CC^{\times}$, and
this hsop carries $V^*$.  Moreover, the 
$\CC$-linear map induced by $x \mapsto \alpha x^{2k+1}$ is $W$-equivariant.

The ideal $(\Theta) = (\alpha x^{2k+1})$ cuts out the origin $\{ 0 \}$ with multiplicity 
$2k+1$.  On the other hand, we have that 
$V^{\Theta} = \mathcal{Z}(\alpha x^{2k+1} - x) = \{ 0, \beta, \omega \beta, \dots, \omega^{2k-1} \beta \}$, where 
$\beta^{-2k} = \alpha$ (and $\omega$ is a primitive $2k^{th}$ root of unity).  Moreover, each point in
$V^{\Theta}$ is cut out with multiplicity one, so that $V^{\Theta} \cong \Park^{alg}_W(k)$.
The nonzero element $s \in W$ acts on $V^{\Theta}$ as multiplication by $-1$ and the 
distinguished
generator $g$ of $\ZZ_{2k}$ scales by $\omega$. 

Let $c = s \in W$ be the unique choice of Coxeter element.  Then $NC(W) = W$ and
$\LLL = \{ 0, V \}$.   
Also,
 $\Park^{NC}_W(k)$ consists of the $2k + 1$ equivalence classes
\begin{equation*}
\Park^{NC}_W(k) = 
\{ [1, 0^k] \} \uplus \{ [1, V^i 0^{k-i}], [s, V^i 0^{k-i}] \,:\, 0 < i \leq k \}.  
\end{equation*}
 The action of $g \in \ZZ_{2k}$ on $\Park^{NC}_W(k)$ is given by
 \begin{align*}
g.[1, 0^k] &= [1, 0^k],  \\
g.[w, V^i 0^{k-i}] &= [w, V^{i+1} 0^{k-i-1}]  \text{ for $0 < i < k,$}\\
g.[w, V^k] &= [sw, V^1 0^{k-1}].
\end{align*} 
 Therefore, the assignments 
$[1, 0^k] \mapsto 0$ and $[1, V^k] \mapsto \beta$ induce a 
$W \times \ZZ_{2k}$-equivariant biejction 
$\Park^{NC}_W(k) \xrightarrow{\sim} \Park^{alg}_W(k)$.

 \begin{figure}
\includegraphics[scale=0.4]{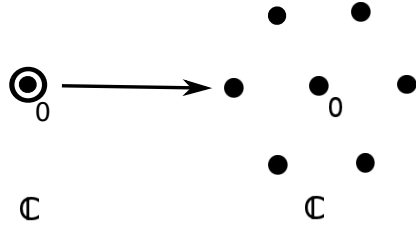}
\caption{The deformation in rank 1 when $k = 3$.  The origin has multiplicity $7$ on the 
left.}
\label{fig:deform}
\end{figure}

Figure~\ref{fig:deform} shows the deformation of varieties induced by 
$(\Theta) \leadsto (\Theta - \xx)$ in rank $1$ when $k = 3$.  The origin has multiplicity $7$
on the left, and the $7$ multiplicity one points on the right lie at the vertices and center
of a regular hexagon in the complex plane centered at the origin.  The nonidentity element
$s \in W$ acts by the scalar $-1$ and
$\ZZ_6$ acts by rotation.

\end{example}

\subsection{The Main Conjecture}  The Main Conjecture of this paper asserts a
deep relationship between $\Park^{NC}_W(k)$
and $\Park^{alg}_W(k)$.  We present its three avatars in decreasing order of strength.

\begin{strongc}
{\bf For any} hsop $\theta_1, \dots, \theta_n$ of degree $kh+1$ carrying $V^*$ and
any basis $x_1, \dots, x_n$ of $V^*$ such that the $\CC$-linear map induced by
$x_i \mapsto \theta_i$
is $W$-equivariant, the variety $V^{\Theta}$ cut out by $(\Theta - \xx)$ consists of
$(kh+1)^n$ distinct points of multiplicity one, so that 
$V^{\Theta} \cong_{W \times \ZZ_{kh}}  \Park^{alg}_W(k)$.
Furthermore, there is a $W \times \ZZ_{kh}$-equivariant bijection 
$V^{\Theta} \xrightarrow{\sim} \Park^{NC}_W(k)$.
\end{strongc}

Example~\ref{rank1} shows that the Strong Conjecture is true in rank 1.
In Section~\ref{The Main Conjecture: Strong Version} we give further evidence for this
conjecture as it applies to an arbitrary reflection group $W$.  Even if the Strong 
Conjecture is false, its conclusion may still hold for special choices of hsop.

\begin{intermediatec}
{\bf There exists a} hsop $\theta_1, \dots, \theta_n$ of degree $kh+1$ carrying $V^*$ and
a  basis $x_1, \dots, x_n$ of $V^*$ such that the $\CC$-linear map induced by
$x_i \mapsto \theta_i$
is $W$-equivariant and the variety $V^{\Theta}$ cut out by $(\Theta - \xx)$ consists of
$(kh+1)^n$ distinct points of multiplicity one, so that 
$V^{\Theta} \cong_{W \times \ZZ_{kh}}  \Park^{alg}_W(k)$.
Furthermore, there is a $W \times \ZZ_{kh}$-equivariant bijection 
$V^{\Theta} \xrightarrow{\sim} \Park^{NC}_W(k)$.
\end{intermediatec}

An argument of Etingof reproduced in \cite[Theorem 12.1]{ARR} implies that there exists 
an hsop $\theta_1, \dots, \theta_n$ of degree $kh+1$ such that the variety $V^{\Theta}$ consists of
$(kh+1)^n$ distinct points.  Unfortunately, this hsop arises in a subtle fashion
from the theory of rational Cherednik 
algebras and we are unable to calculate 
the $W \times \ZZ_{kh}$-structure of the resulting variety $V^{\Theta}$.

In Sections \ref{Type I}, \ref{Type BC}, and \ref{Type D}, we prove the 
Intermediate Conjecture when $W$ has type BCDI.  These proofs rely on convenient
choices of hsops $\theta_1, \dots, \theta_n$ and explicit visualizations of 
$\Park^{NC}_W(k)$ in these types.  In type BC, our visualization of $\Park^{NC}_W(k)$
will come from labeled centrally symmetric $k$-divisible noncrossing partitions on the 
disc $\DD^2$.  When $W$ is of type $D$, the visualization will come from an
interpretation of $k$-$W$-noncrossing partitions on the {\bf annulus} $\AAA^2$ due
to Krattenthaler and M\"uller \cite{KrattenthalerMuller} and Kim \cite{Kim}.

Either the Strong or Intermediate Conjectures imply that the 
character of the $W \times \ZZ_{kh}$-module $\Park^{NC}_W(k)$ is given by Equation~\ref{hsopchar}.  Even if the geometric statements about
$V^{\Theta}$ asserted in these conjectures are false, the module $\Park^{NC}_W(k)$ might
still have this character.

\begin{weakc}
The character $\chi : W \times \ZZ_{kh} \rightarrow \CC$ of the module 
$\Park^{NC}_W(k)$ is given by Equation~\ref{hsopchar}.
Equivalently, we have an isomorphism $\Park^{NC}_W(k) \cong_{W \times \ZZ_{kh}}
\Park^{alg}_W(k)$ of $W \times \ZZ_{kh}$-modules.
\end{weakc}

In Section~\ref{Type A} we prove the Weak  
Conjecture in type A using the visual representation of $\Park^{NC}_{\symm_n}(k)$
in Example~\ref{ex:AkNC}.  The difficulty of constructing hsops in type A has so far obstructed
proof of the Intermediate and Strong Conjectures for the symmetric group.

Even the Weak Conjecture remains open in the exceptional types EFH.  
While the $k = 1$ case in \cite{ARR} was a finite computer check in these types 
(which could not be performed in types E$_7$ and E$_8$ due to computational 
complexity), this is (a priori) an infinite statement in our context since the 
Fuss parameter $k$ could be any positive integer.

\begin{table}
\label{truth}
\caption{The strongest version of the Main Conjecture proven in a given type.}
\begin{tabular} {| p{3.5cm} |  p{4cm} | p{4cm} |}
\hline
Conjecture Version  & Proven for $k = 1$ & Proven for arbitrary $k$  \\ \hline
Strong & A$_1$, I$_2(m)$ & A$_1$  \\ \hline
Intermediate & B$_n$/C$_n$, D$_n$ & B$_n$/C$_n$, D$_n$, I$_2(m)$ \\ \hline
Weak & A$_{n-1}$, E$_6$, F$_4$, H$_3$, H$_4$ & A$_{n-1}$ \\ \hline
\end{tabular}
\end{table}

Table~\ref{truth} gives the status of the Main Conjecture for various reflection
groups.

\subsection{Consequence of the Weak Conjecture:  A Cyclic Sieving Phenomenon}
Even the Weak Conjecture determines the cycle structure of 
the action of $g$ on $NC^k(W)$ (or $NC_k(W)$), where $\ZZ_{kh} = \langle g \rangle$, and
proves a cyclic sieving conjecture of Armstrong \cite[Conjecture 5.4.7]{Armstrong}.  Our proof
modulo the Weak Conjecture
is type uniform and representation theoretic (rather than directly enumerative).

Let $\Park^{NC}_W(k)^W$ denote the $W$-invariant subspace of 
$\Park^{NC}_W(k)$.  This invariant subspace has a natural basis indexed by 
orbits in $\Park^{NC}_W(k)/W$, and these orbits biject naturally with $k$-$W$-noncrossing
partitions.  The residual action of $\ZZ_{kh}$ on this basis is given by the action
of $\ZZ_{kh}$ on $NC^k(W)$.  Therefore, the trace of $g^d \in \ZZ_{kh}$ acting on 
$\Park^{NC}_W(k)^W$ equals the number of elements
$(w_1 \leq \dots \leq w_k) \in NC^k(W)$ which 
satisfy $g^d.(w_1 \leq \dots \leq w_k) = (w_1 \leq \dots \leq w_k)$.

On the other hand, if the Weak Conjecture holds for $W$, the 
trace of $g^d$ on $\Park^{NC}_W(k)^W$ equals the evaluation of the 
Hilbert series for $\CC[V] / (\Theta) ^ W$ at $q = \omega^d$.  
This Hilbert series
is the {\sf $q$-Fuss-Catalan number} for $W$ (see \cite{BessisR, BEG, Gordon}):
\begin{equation}
\Cat^k(W; q) = \prod_{i = 1}^n \frac{1 - q^{kh + d_i}}{1 - q^{d_i}},
\end{equation}
where $d_1, \dots, d_n$ are the degrees of $W$.

In summary, whenever the 
weak conjecture holds for $W$ we recover  a cyclic sieving result proven
originally by Krattenthaler and M\"uller \cite{KrattenthalerMuller, KrattenthalerMuller2}.

\begin{theorem} 
\label{CSPtheorem}
Suppose the Weak Conjecture holds for $W$.  Then the triple
\begin{equation*}
(NC^k(W), \Cat^k(W; q), \ZZ_{kh})
\end{equation*}
exhibits the {\sf cyclic sieving phenomenon}: the number of elements 
in $NC^k(W)$ fixed by $g^d$ equals the polynomial $\Cat^k(W; q)$
evaluated at $q = \omega^d$.
\end{theorem}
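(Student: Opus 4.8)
The plan is to derive the cyclic sieving statement directly from the Weak Conjecture by a representation-theoretic bookkeeping argument, essentially transcribing the discussion that precedes the theorem into a clean proof. First I would identify the $W$-invariant subspace $\Park^{NC}_W(k)^W$ and observe that, because the $W$-action on $\Park^{NC}_W(k)$ is by the ``label'' action $v.[w, X_1 \leq \dots \leq X_k] = [vw, X_1 \leq \dots \leq X_k]$ which fixes the chain of flats, the orbits in $\Park^{NC}_W(k)/W$ biject with noncrossing $k$-flats, hence with $NC^k(W)$ via the injection of Definition~\ref{definition-fuss-noncrossing-partitions}. The orbit sums $\{ \sum_{v \in W/W_{X_1}} v.[w, X_1 \leq \dots \leq X_k] \}$ form a $\CC$-basis of $\Park^{NC}_W(k)^W$ indexed by $NC^k(W)$. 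Since the $\ZZ_{kh}$-action commutes with $W$ (Proposition~\ref{well-defined}), it preserves $\Park^{NC}_W(k)^W$, and on this orbit-sum basis it permutes basis vectors exactly according to the $\ZZ_{kh}$-action on $NC^k(W)$ described in Section~2.4 (transported through $\int$ and the flat injection). Therefore the trace of $g^d$ on $\Park^{NC}_W(k)^W$ is precisely $\#\{ (w_1 \leq \dots \leq w_k) \in NC^k(W) : g^d.(w_1 \leq \dots \leq w_k) = (w_1 \leq \dots \leq w_k) \}$, the left-hand side of the desired CSP identity.

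Next I would compute the same trace a second way using the Weak Conjecture. By hypothesis $\Park^{NC}_W(k) \cong_{W \times \ZZ_{kh}} \Park^{alg}_W(k)$, and by Proposition~\ref{deformation} the latter is isomorphic as a $W \times \ZZ_{kh}$-module to $\CC[V]/(\Theta)$. The invariant subspace $\big(\CC[V]/(\Theta)\big)^W$ is a graded vector space, and the $\ZZ_{kh}$-action scales degree-$d$ pieces by $\omega^d$; hence the trace of $g^d$ on $\big(\CC[V]/(\Theta)\big)^W$ equals the Hilbert series $\mathrm{Hilb}\big((\CC[V]/(\Theta))^W; q\big)$ evaluated at $q = \omega^d$. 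It is a standard fact (see \cite{BessisR, BEG, Gordon}) that, because $\theta_1, \dots, \theta_n$ is an hsop of degree $kh+1$ carrying $V^*$ acting on the polynomial ring $\CC[V] = \CC[x_1,\dots,x_n]$ whose fundamental invariants have degrees $d_1, \dots, d_n$, the Hilbert series of the invariant quotient is the $q$-Fuss–Catalan number
\begin{equation*}
\mathrm{Hilb}\big((\CC[V]/(\Theta))^W; q\big) = \prod_{i=1}^n \frac{1 - q^{kh+d_i}}{1 - q^{d_i}} = \Cat^k(W; q).
\end{equation*}
Equating the two trace computations yields that $\#\{ (w_1 \leq \dots \leq w_k) \in NC^k(W) : g^d \text{ fixes it}\}$ equals $\Cat^k(W;q)$ evaluated at $q = \omega^d$, which is exactly the assertion that $(NC^k(W), \Cat^k(W;q), \ZZ_{kh})$ exhibits the cyclic sieving phenomenon.

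The main obstacle, and the place that needs genuine care rather than routine verification, is the first paragraph: one must check precisely that the $\ZZ_{kh}$-action on the orbit-sum basis of $\Park^{NC}_W(k)^W$ matches the $\ZZ_{kh}$-action on $NC^k(W)$ from Section~2.4. The subtlety is that $g$ acts on a parking function $[w, X_1 \leq \dots \leq X_k]$ by the twisted formula $g.[w, X_1 \leq \dots \leq X_k] = [wu_k c^{-1}, g.(X_1 \leq \dots \leq X_k)]$, so passing to orbit sums one needs the group-element twist $wu_kc^{-1}$ to be irrelevant after summing over the coset $W/W_{Y_1}$ — which it is, since for any fixed chain the orbit sum depends only on the chain. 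Lemma~\ref{lemma-first-component} guarantees $W_{Y_1}$ is the correct conjugate, so the orbit sum is genuinely well-defined and the induced permutation action on chains is exactly the Section~2.4 action. Once this identification is nailed down, the rest is the two trace computations above; I would also double-check the standard Hilbert-series formula for $(\CC[V]/(\Theta))^W$ (it follows from the fact that the $\theta_i$ form a regular sequence together with Molien-type bookkeeping, since $\CC[V]$ is free over $\CC[\theta_1,\dots,\theta_n]$ when the $\theta_i$ are an hsop).
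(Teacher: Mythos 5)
Your proposal is correct and follows essentially the same route as the paper: both compute the trace of $g^d$ on the invariant subspace $\Park^{NC}_W(k)^W$ in two ways, once via the orbit-sum basis indexed by $NC^k(W)$ (on which $\ZZ_{kh}$ acts as it does on $NC^k(W)$), and once, using the Weak Conjecture together with Proposition~\ref{deformation}, via the Hilbert series of $(\CC[V]/(\Theta))^W$, which is $\Cat^k(W;q)$ evaluated at $q=\omega^d$. Your added check that the group-element twist in the $g$-action becomes irrelevant after summing over an orbit is a point the paper leaves implicit, but the argument is the same.
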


The $k = 1$ case of
Theorem~\ref{CSPtheorem} was proven in a case-by-case manner by 
Bessis and Reiner \cite{BessisR}.  Krattenthaler and M\"uller gave a case-by-case
proof for arbitrary $k$.  
The above argument proves Theorem~\ref{CSPtheorem} uniformly modulo the Weak
Conjecture.


\section{The Strong Conjecture}
\label{The Main Conjecture: Strong Version}

In this section we give evidence for the Strong Conjecture by
computing certain subsets of the variety $V^{\Theta}$ and showing that they consist
of multiplicity one points and have $W \times \ZZ_{kh}$-equivariant injections onto 
explicit subsets of $\Park^{NC}_W(k)$.

To state our evidence for the Strong Conjecture,
we introduce a stratification of $V^{\Theta}$ which is stable under 
the $W \times \ZZ_{kh}$-action.  Define the {\sf dimension} of a point $v \in V^{\Theta}$
to be the minimum dimension $\dim(X)$ of a flat $X \in \LLL$ such that $v \in X$.
Let $V^{\Theta}(d)$ denote the set of $d$-dimensional points in $V^{\Theta}$.  Then we 
have a disjoint union decomposition
\begin{equation*}
V^{\Theta} = V^{\Theta}(0) \uplus V^{\Theta}(1) \uplus \dots \uplus V^{\Theta}(n),
\end{equation*}
where $n = \dim(V)$ is the rank of $W$.  Furthermore, each of the sets in this decomposition
is stable under the action of $W \times \ZZ_{kh}$.

\subsection{Analysis of $V^{\Theta}(0)$}  We have the following result about the set 
$V^{\Theta}(0)$.

\begin{proposition}
\label{dimension-zero-proposition}
The set $V^{\Theta}(0)$ consists of the single point $\{ 0 \}$, and this point is cut out
by $(\Theta - \xx)$
with multiplicity one.  The map
\begin{equation*}
0 \mapsto [1, 0 \leq \dots \leq 0]
\end{equation*}
defines a $W \times \ZZ_{kh}$-equivariant injection 
$V^{\Theta}(0) \hookrightarrow \Park^{NC}_W(k)$.
\end{proposition}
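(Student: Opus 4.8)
The plan is to verify each of the three assertions in turn: (i) $V^{\Theta}(0) = \{0\}$; (ii) the origin has multiplicity one in $\CC[V]/(\Theta - \xx)$; (iii) the displayed map is a well-defined $W \times \ZZ_{kh}$-equivariant injection.

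For (i), recall that by definition the dimension of a point $v \in V^{\Theta}$ is the minimum $\dim(X)$ over flats $X \in \LLL$ with $v \in X$, so $v$ has dimension $0$ exactly when $v$ lies on no proper flat, i.e. $v = 0$ (the only point in the flat $\{0\}$, which is itself a flat since it is the intersection of all the reflecting hyperplanes when $W$ is irreducible of rank $\geq 1$; in rank $0$ the statement is vacuous). One must also check $0 \in V^{\Theta}$, which is immediate: each $\theta_i$ is homogeneous of positive degree $kh+1$, so $\theta_i(0) = 0 = x_i(0)$, hence $0 \in \mathcal{Z}(\Theta - \xx) = V^{\Theta}$. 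Thus $V^{\Theta}(0) = \{0\}$.

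For (ii), I would compute the multiplicity of the origin in $\CC[V]/(\Theta - \xx)$ via the Jacobian criterion: the multiplicity of an isolated point $v$ of $\mathcal{Z}(I)$ equals $1$ iff $v$ is a smooth point, and for a complete intersection cut out by $n$ functions in $n$ variables this holds iff the Jacobian matrix of $(\theta_1 - x_1, \dots, \theta_n - x_n)$ is nonsingular at $v$. At the origin, $\partial(\theta_i - x_j)/\partial x_j$ evaluated at $0$: since each $\theta_i$ is homogeneous of degree $kh + 1 \geq 2$, all its first partials vanish at $0$, so the Jacobian of $(\theta_1 - x_1, \dots, \theta_n - x_n)$ at $0$ is $-I_n$, which is nonsingular. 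Hence the origin is a reduced point of $V^{\Theta}$ — multiplicity one. (Alternatively one can appeal directly to Proposition~\ref{deformation} plus the local structure near $0$, but the Jacobian argument is cleanest and self-contained.)

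For (iii), I first note $0 \leq \dots \leq 0$ is a genuine noncrossing $k$-flat — it is the image under the injection of Definition~\ref{definition-fuss-noncrossing-partitions} of $(c \leq_T \dots \leq_T c) \in NC^k(W)$, since $V^c = 0$ for a Coxeter element $c$ (reflection length $\ell_T(c) = n = \dim V$). So $[1, 0 \leq \dots \leq 0] \in \Park^{NC}_W(k)$ is well defined. Injectivity is trivial since the source is a single point. Equivariance: for $W$, $v.[1, 0^k] = [v, 0^k]$; but $W_0 = W$, so $v W_0 = W_0$ and $[v, 0^k] = [1, 0^k]$ — matching the fact that $W$ fixes the origin $0 \in V$. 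For $\ZZ_{kh}$: the generator $g$ acts on $[1, 0 \leq \dots \leq 0]$ by $[1 \cdot u_k c^{-1}, g.(0 \leq \dots \leq 0)]$ where $u_k$ is the noncrossing group element with $V^{u_k} = 0$, i.e. $u_k = c$; and $g.(0 \leq \dots \leq 0) = 0 \leq \dots \leq 0$ because $\ZZ_{kh}$ translates flats by powers of $c$ (Equation~\ref{kthpower} and the definition of the action) and $c \cdot 0 = 0$. Hence $g.[1, 0^k] = [c c^{-1}, 0^k] = [1, 0^k]$, matching $g \cdot 0 = \omega \cdot 0 = 0$ in $V^{\Theta}$. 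So the map is $W \times \ZZ_{kh}$-equivariant.

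The main obstacle here is essentially bookkeeping rather than depth: one must be careful that $0 \leq \dots \leq 0$ really is a noncrossing $k$-flat and that the somewhat intricate definition of the $g$-action from Section~2.4 genuinely fixes it — the cleanest route is to trace through $\partial$, the $\ZZ_{kh}$-action on $NC_k(W)$, and $\int$ applied to $(c \leq \dots \leq c)$, where every factor in the $\ell_T$-additive factorization except one is the identity, so the conjugation-heavy formula collapses. Everything else (the Jacobian computation, the triviality of injectivity) is routine.
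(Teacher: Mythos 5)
Your proposal is correct and follows essentially the same route as the paper: the multiplicity-one claim is handled by exactly the same Jacobian computation (the Jacobian of $(\theta_1 - x_1, \dots, \theta_n - x_n)$ at the origin is $-I_n$ since each $\theta_i$ is homogeneous of degree $kh+1 > 1$), and the identification $V^{\Theta}(0) = \{0\}$ and the equivariance of $0 \mapsto [1, 0 \leq \dots \leq 0]$ are the parts the paper declares clear, which you simply verify in more detail (including the useful check that $(c \leq_T \dots \leq_T c)$ maps to the fixed multichain $0 \leq \dots \leq 0$ under the $g$-action).
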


\begin{proof}
It is clear that the origin is in the zero locus of $(\Theta - \xx)$, so that
$V^{\Theta}(0) = \{ 0 \}$.  The equivariance of the assignment in the proposition is 
also clear.  For the multiplicity one assertion, consider the Jacobian 
$( \frac{\partial f_i}{\partial x_j} )_{1 \leq i < j \leq n}$ of the map
$(f_1, \dots, f_n) = (\theta_1 - x_1, \dots, \theta_n - x_n)$ evaluated at 
$x_1 = \dots = x_n = 0$.  Since each $\theta_i$ is homogeneous of degree
$kh + 1 > 1$, this evaluation equals the negative of the $n \times n$ identity matrix,
which is nonsingular.  
\end{proof}

\subsection{Analysis of $V^{\Theta}(1)$}  The analog of 
Proposition~\ref{dimension-zero-proposition}
for one dimensional points is more difficult to prove.  We recall classical material
related to bipartite Coxeter elements and the Coxeter plane which can  be
found in \cite[Chapter 3]{Humphreys}.

Let $S = S_+ \uplus S_-$
be a partition of the simple reflections in $W$ into two sets of pairwise commuting elements.
Such a partition can be achieved because the Coxeter graph associated to $W$ is a tree.
A {\sf bipartite} Coxeter element $c$ of $W$ is a Coxeter element written
$c = c_+ c_-$, where $c_{\epsilon}$ is a product of the generators in $S_{\epsilon}$ (in 
any order) for $\epsilon \in \{ + , - \}$. 

Under the above assumptions, the {\sf Coxeter plane} $P$ is a $2$-dimensional subspace
$P \subseteq V$ on which the dihedral group $\langle c_+, c_- \rangle$ generated by
$c_+$ and $c_-$ acts.  It is known that the bipartite Coxeter element $c = c_+ c_-$ acts
on $P$ by $h$-fold rotation.  We have the following fact, proven uniformly in
\cite{ARR}, which relates noncrossing lines (with respect to the 
bipartite Coxeter element $c$) and the Coxeter plane.

\begin{lemma} \cite[Lemma 5.4]{ARR}
\label{noncrossing-lines}
Noncrossing one-dimensional flats are never orthogonal to the Coxeter plane.
\end{lemma}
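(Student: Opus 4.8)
The plan is to argue by contradiction, reducing the statement to the classical fact that the distinguished $e^{2\pi i/h}$-eigenvector of a Coxeter element is a regular vector. First I would fix notation: a one-dimensional noncrossing flat has the form $X = V^w$ for a unique $w \in NC(W) = [1,c]_T$ with $\dim(V^w) = 1$, i.e.\ $\ell_T(w) = n-1$. Put $t := w^{-1}c$. Since $w \leq_T c$, the defining relation of absolute order gives $\ell_T(c) = \ell_T(w) + \ell_T(t)$, and using $\ell_T(c) = n$ (equivalently $V^c = \{0\}$, as an irreducible Coxeter element has no nonzero fixed vector) we get $\ell_T(t) = 1$, so $t$ is a reflection, say $t = s_\beta$ for a root $\beta$, and $w = ct$ because $t$ is an involution. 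I would also record the elementary identity $V^w \cap V^t = \{0\}$: a vector fixed by both $w$ and $t$ is fixed by $wt = c$, and $V^c = \{0\}$.

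Next I would assemble the standard facts about the Coxeter plane $P$ found in \cite[Ch.~3]{Humphreys}. Because $c$ acts on $P$ by $h$-fold rotation, both $P$ and $P^\perp$ are $c$-stable and $P \otimes \CC$ is spanned by eigenvectors of $c$ with eigenvalues $e^{\pm 2\pi i/h}$; since the exponent $1$ of $W$ has multiplicity one, the $e^{2\pi i/h}$-eigenline of $c$ on $V \otimes \CC$ lies inside $P \otimes \CC$. By Springer's theory of regular elements this eigenline is spanned by a regular vector $\xi$, so $\langle \xi, \gamma \rangle \neq 0$ for every root $\gamma$; consequently no root of $W$ is orthogonal to $P$.

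The heart of the argument is then the following. Suppose, for contradiction, that the noncrossing line $X = V^w$ is orthogonal to $P$, i.e.\ $X \subseteq P^\perp$, and pick a nonzero $x \in X$. Since $V^w \cap V^t = \{0\}$ and $x \neq 0$, we have $x \notin V^t = H_t$, so by the reflection formula $x - tx$ is a nonzero scalar multiple of $\beta$. On the other hand $w = ct$ fixes $x$, so $tx = c^{-1}x$; as $x \in X \subseteq P^\perp$ and $P^\perp$ is $c$-stable, this gives $tx \in P^\perp$, whence $x - tx \in P^\perp$. Combining the two, $\beta \in P^\perp$, contradicting that no root is orthogonal to $P$. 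Hence no one-dimensional noncrossing flat is orthogonal to the Coxeter plane.

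The only step drawing on anything outside the excerpt is the input "no root is orthogonal to $P$" — equivalently, "the $e^{2\pi i/h}$-eigenvector of $c$ is regular" — and I regard this as the main (indeed essentially the only) obstacle; it is classical enough that in the write-up it would be cited to Springer or Humphreys rather than reproved. Everything else (passing from $X$ to the reflection $t$, the identity $V^w \cap V^t = \{0\}$, and the two-line computation $x - tx = x - c^{-1}x \in P^\perp$) is routine and uniform in $W$, so no case analysis is required and the proof does not even invoke Lemmas~\ref{conjugate-to-noncrossing} or \ref{W-and-C-orbits}.
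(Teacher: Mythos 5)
Note first that this paper does not prove Lemma~\ref{noncrossing-lines} at all: it is quoted verbatim from \cite[Lemma 5.4]{ARR}, so there is no in-paper argument to compare yours against. Judged on its own, your proof is correct and uniform. The reduction is clean: for a one-dimensional noncrossing flat $X=V^w$ one has $\ell_T(w)=n-1$, so $t:=w^{-1}c$ is a reflection $s_\beta$ with $w=ct$, and $V^w\cap V^t\subseteq V^c=\{0\}$; then, assuming $X\subseteq P^{\perp}$ and taking $0\neq x\in X$, the identity $tx=c^{-1}x$ together with $c$-stability of $P^{\perp}$ forces the nonzero vector $x-tx$ (a multiple of $\beta$, since $x\notin H_\beta$) to lie in $P^{\perp}$, i.e.\ $\beta\perp P$. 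The one external input, that no root is orthogonal to the Coxeter plane --- equivalently that the $e^{2\pi i/h}$-eigenvector of $c$ spanning $P\otimes\CC$ is a regular vector --- is indeed classical (Springer/Kostant; cf.\ \cite{Humphreys}) and is exactly the kind of fact one cites rather than reproves, so leaning on it is legitimate. Two small points to handle in a careful write-up: work in the real form (or with the bilinear extension of the form), since the paper's $V$ is complexified while $P$ is a real plane and $X$ is defined over $\RR$; and note that the multiplicity-one statement for the exponent $1$ (which places the $e^{2\pi i/h}$-eigenline inside $P\otimes\CC$) uses irreducibility of $W$, which is a standing assumption here. Neither affects the validity of the argument.
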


We will also need the following result.

\begin{lemma}
\label{even-h}
Suppose there exists a one-dimensional flat $X \in \LLL$ and $w \in W$
such that $w$ acts as $-1$ on $X$.  Then the Coxeter number $h$ is even.
\end{lemma}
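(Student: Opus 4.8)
The plan is to first reduce to the case where $X$ is a \emph{noncrossing} line, and then read off the parity of $h$ from the Coxeter plane. The hypothesis is invariant under replacing the pair $(X,w)$ by $(vX, vwv^{-1})$ for $v \in W$, since $vwv^{-1}$ acts as $-1$ on $vX$; so by Lemma~\ref{conjugate-to-noncrossing} we may assume $X$ is a noncrossing one-dimensional flat. Observe that $wX = X$ as a set (because $-X = X$), so $w$ lies in the setwise stabilizer $N_W(X)$ but not in $W_X$; moreover the entire coset $wW_X$ acts by $-1$ on $X$ (an element of $N_W(X)$ acting as $+1$ on the line $X$ would fix $X$ pointwise), which leaves some freedom in the choice of $w$.

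Next I would bring in a bipartite Coxeter element $c = c_+c_-$ and its Coxeter plane $P$. The dihedral subgroup $\langle c_+, c_- \rangle \leq W$ has order $2h$, stabilizes $P$, and acts on $P$ as the full dihedral group of that order, with $c|_P$ a rotation through $2\pi/h$; its rotation subgroup is exactly $\langle c|_P \rangle$, which contains $-\mathrm{id}_P$ (rotation through $\pi$) if and only if $h$ is even. By Lemma~\ref{noncrossing-lines}, $X$ is not orthogonal to $P$, so orthogonal projection $\pi_P : V \to P$ restricts to an isomorphism from $X$ onto a line $\ell \subset P$. Since $c$ stabilizes the decomposition $V = P \oplus P^\perp$, the map $\pi_P$ is $c$-equivariant, so the $C$-orbit of $X$ — which by Lemma~\ref{W-and-C-orbits} is exactly the set of noncrossing lines in the $W$-orbit of $X$ — projects onto the orbit of $\ell$ under the rotations $c^j|_P$.

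The crux is to convert the relation $w|_X = -1$ into a rotation through $\pi$ lying in $\langle c|_P \rangle$. Here I would use the freedom in $wW_X$ to choose a representative compatible with the Coxeter-plane picture — e.g.\ one of minimal reflection length in its coset, which forces $w$ into a controlled position relative to $NC(W)$ and $P$ — and then check that the resulting symmetry of $P$ is the rotation through $\pi$; since that rotation is then a power of $c|_P$, the order $h$ of $c$ must be even. The expected main obstacle is exactly this compatibility step: a priori $w$ need not stabilize $P$ at all, so pinning down its effect on the Coxeter plane is the only real work. If the uniform argument turns out to be delicate, there is a short fallback via the classification: $h$ is odd only when $W = \symm_n$ with $n$ odd or $W = I_2(m)$ with $m$ odd, and in those cases a one-dimensional flat is a two-block partition (whose setwise stabilizer strictly enlarges $W_X$ only when the two blocks are equinumerous, impossible for $n$ odd) or a reflecting line (perpendicular to another mirror only when $m$ is even), respectively; so no element can act as $-1$ on a one-dimensional flat, which is the contrapositive of the lemma.
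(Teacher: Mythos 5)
Your primary (Coxeter-plane) route has a genuine gap exactly at the step you yourself flag as the crux. Turning ``$w$ acts as $-1$ on $X$'' into ``rotation by $\pi$ lies in $\langle c|_P\rangle$'' is the whole content of the lemma, and the mechanism you propose does not supply it: the orthogonal projection $\pi_P$ is equivariant only for elements that stabilize $P$ (such as $c_+$, $c_-$, $c$), while a general $w$ with $w|_X=-1$ need not stabilize $P$, and nothing forces any element of the coset $wW_X$ to do so --- taking a representative of minimal reflection length controls its place in absolute order, not its relationship to the Coxeter plane. Lemma~\ref{W-and-C-orbits} is also of no help here, since it concerns the noncrossing lines in the $W$-orbit of $X$, whereas your $w$ satisfies $wX=X$; the statement one would want, namely that some power $c^m$ satisfies $c^mX=X$ with $c^m|_X=-1$, is precisely what your sketch never produces. (Indeed, later in the paper the implication runs the other way: in the proof of Lemma~\ref{stabilizer-lemma} one starts from $c^mX=X$ and uses Lemma~\ref{even-h} itself to sort out the two possibilities.) So, as written, the uniform argument is missing its key idea.

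For comparison, the paper's proof is uniform but routes through the fundamental chamber rather than the Coxeter plane: after conjugating, one may assume $X=\RR\delta_i$ is spanned by a fundamental weight; one then shows that the long element $w_0$ must act as $-1$ on $X$, deduces $-w_0\alpha_i=\alpha_i$, and finally uses the expression $w_0=c^{\frac{h-1}{2}}c_+$, valid for a bipartite Coxeter element when $h$ is odd, to contradict $V^c=0$. Any repair of your uniform argument would need something playing the role of this reduction to $w_0$. Your fallback via the classification (Coxeter number odd only for $\symm_n$ with $n$ odd and I$_2(m)$ with $m$ odd) does establish the lemma and is essentially the case check the paper mentions after its proof; the type A argument is fine, but in I$_2(m)$ you must exclude not only reflections whose mirror is perpendicular to $X$ but also the rotation by $\pi$, which lies in the group only when $m$ is even. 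Note, however, that this fallback is classification-dependent, whereas the paper's argument (and presumably the intent of the lemma) is type-uniform.
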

\begin{proof}
We will reduce the statement of the lemma to two successively weaker claims.
For the remainder of the proof, consider the reflection representation $V$ over $\RR$.

Write the simple roots as $\Pi = \{ \alpha_1, \dots, \alpha_n \}$ and let 
$\delta_1, \dots, \delta_n \in V$
be the corresponding {\sf fundamental weights} defined by
 $\langle \alpha_i, \delta_j \rangle = \delta_{i,j}$.  Then the {\sf fundamental chamber}
 $F = \RR_{\geq 0} \delta_1 + \cdots + \RR_{\geq 0} \delta_n$ is a fundamental domain for
 the action of $W$ on $V$.
 
 {\bf Claim 1.}  Suppose there exists $1 \leq i \leq n$ and $w \in W$ so that 
 $w$ acts as $-1$ on $X = \RR \delta_i \in \LLL$.  Then $h$ is even.

 To see why Claim 1 proves the lemma, let $X \in \LLL$ be arbitrary and suppose $w \in W$
 acts as $-1$ on $X$.  By standard geometric results about reflection groups (see
 \cite{Humphreys}), there exists $v \in W$ and $1 \leq i \leq n$ such that
 $v X = \RR \delta_i$.  Therefore, the group element $v w v^{-1} \in W$ acts as $-1$
 on $\RR \delta_i$.
 
Let $w_0 \in W$ be the long element.
 
 {\bf Claim 2.}  Suppose there exists $1 \leq i \leq n$ such that 
 $w_0$ acts as $-1$ on $X = \RR \delta_i$.  Then $h$ is even.
 
 To see why Claim 2 proves Claim 1, suppose that $1 \leq i \leq n$ and
 $w$ acts as $-1$ on $X = \RR \delta_i$ for some $w \in W$.  In particular, we have that
 $w \delta_i = - \delta_i$.  Since $w_0$ carries $-F$ to $F$, we have that 
 $- w_0 \delta_i = w_0 w \delta_i \in F$.  By \cite[Theorem 1.12]{Humphreys}, this forces
 $w_0 w \delta_i = \delta_i$, so that $- w_0 \delta_i = \delta_i$ and $w_0$ acts as $-1$
 on $\delta_i$.  
 
 We claim that $- w_0 \alpha_i = \alpha_i$.  To see this, notice that
 for $1 \leq j \leq n$, we have
 $\langle -w_0 \alpha_i, \delta_j \rangle = \langle \alpha_i, - w_0 \delta_j \rangle$
 because $-w_0$ is an isometry.  Since $-w_0 \delta_i = \delta_i$, we conclude that
 $\langle -w_0 \alpha_i, \delta_i \rangle = 1$.  For $j \neq i$, we have that 
 $-w_0 \delta_j$ is proportional to 
 $\delta_{\ell}$ for some $\ell$ because the rays generated by the fundamental
 weights are the extremal rays for $F$ and $w_0$ carries $F$ to $-F$.  We cannot have
 $i = \ell$, so we conclude that $\langle -w_0 \alpha_i, \delta_j \rangle = 0$ for $i \neq j$.
 This proves that $-w_0 \alpha_i = \alpha_i$.
 
Without loss of generality, assume that the Coxeter element $c = c_+ c_-$ is bipartite.
For the sake of contradiction, assume that $h$ is odd.  In this case we can write
$w_0 = c^{\frac{h-1}{2}} c_+$. 
We have that $c_+ \alpha_i = - \alpha_i$ if $\alpha_i$ corresponds to a simple reflection $s_i$ in
$c_+$ and $c_- \alpha_i = - \alpha_i$ if $\alpha_i$ corresponds to a simple reflection $s_i$ in $c_-$.
It follows that 
\begin{equation*}
 \alpha_i = - w_0 \alpha_i = \begin{cases}
c^{\frac{h-1}{2}} \alpha_i & \text{if $s_i$ occurs in $c_+$,} \\
c^{\frac{h+1}{2}} \alpha_i & \text{if $s_i$ occurs in $c_-$.}
\end{cases}
\end{equation*}
Therefore, if $s_i$ occurs in $c_-$, we have that $\alpha_i = (c^{\frac{h+1}{2}})^2 \alpha_i
= c \alpha_i$, which contradicts the fact that $V^c = 0$.  Similarly, if $s_i$ occurs in 
$c_+$, we have that $\alpha_i = c^{-1} \alpha_i$, which also contradicts the fact
that $V^{c} = 0$.  These contradictions show that $h$ must be even, as desired.
\end{proof}

A case-by-case check of Lemma~\ref{even-h} is easy to perform.  The 
Coxeter number $h$ is only odd in type A$_{n-1}$ for $n$ odd and type I$_2(m)$ for
$m$ odd.  In type A$_{n-1}$, any one-dimensional flat is $W$-conjugate to a flat of the form
$\{ (a, \dots, a, b, \dots, b) \in V \}$, where there are $k$ copies of $a$ and $n-k$ copies of
$b$ and $ka + (n-k)b = 0$.  If $n$ is odd, no element of $W = \symm_n$ acts as $-1$ on
such a flat.  Lemma~\ref{even-h} can be verified in type I$_2(m)$ for $m$ odd by noting that
no symmetry of the regular $m$-gon acts as $-1$ on any reflecting hyperplane.

\begin{proposition}
\label{dimension-one-proposition}
The set $V^{\Theta}(1)$ consists entirely of points cut out by $(\Theta - \xx)$ with 
multiplicity one.  Moreover, there is a $W \times \ZZ_{kh}$-equivariant injection
$V^{\Theta}(1) \hookrightarrow \Park^{NC}_W(k)$ whose image is precisely the
set of $k$-$W$-noncrossing parking functions 
$[w, X_1 \leq \dots \leq X_k]$ with $\dim(X_1) = 1$.
\end{proposition}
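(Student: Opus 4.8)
The plan is to analyze $V^{\Theta}$ one flat at a time and then reassemble the pieces $W \times \ZZ_{kh}$-equivariantly.

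\textbf{Step 1: $V^{\Theta}$ on a one-dimensional flat.} Fix a one-dimensional flat $X \in \LLL$; by Theorem~\ref{galois}, $X = V^{W_X}$. Since the polynomial map $\Theta \colon V \to V$, $v \mapsto (\theta_1(v),\dots,\theta_n(v))$, is $W$-equivariant, $\Theta(v)$ is $W_X$-fixed for $v \in X$, so $\Theta(X) \subseteq X$. Choosing a spanning vector $e$ of $X$ and using that $\Theta$ is homogeneous of degree $kh+1$, one gets $\Theta(t e) = \lambda_X t^{kh+1} e$ with $\lambda_X \in \CC^{\times}$ (here $\lambda_X \neq 0$ because $\mathcal{Z}(\Theta) = \{0\}$ precludes $e$ from being a common zero of the $\theta_i$). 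Hence $V^{\Theta} \cap X$ is cut out in the coordinate $t$ by $t(\lambda_X t^{kh} - 1)$: it is $\{0\}$ together with the $kh$ distinct nonzero points $O_X$, a single orbit under multiplication by $kh^{th}$ roots of unity, and these are exactly the dimension-one points of $V^{\Theta}$ lying on $X$. As distinct lines through the origin meet only at $0$, we conclude $V^{\Theta}(1) = \biguplus_X O_X$ over one-dimensional flats $X$.

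\textbf{Step 2: multiplicity one.} Let $v = t_0 e \in O_X$, so $\lambda_X t_0^{kh} = 1$. We need $D\Theta_v - I$ invertible. Homogeneity gives $D\Theta_{t_0 e} = t_0^{kh} D\Theta_e = \lambda_X^{-1} D\Theta_e$, so this is equivalent to $\lambda_X \notin \mathrm{Spec}(D\Theta_e)$. Write $V = X \oplus U$ with $U$ the sum of the nontrivial $W_X$-isotypic components; $D\Theta_e$ is $W_X$-equivariant, hence preserves this splitting, and Euler's identity gives $D\Theta_e(e) = (kh+1)\Theta(e) = (kh+1)\lambda_X e$, so on $X$ the eigenvalue is $(kh+1)\lambda_X \neq \lambda_X$. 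Thus everything reduces to $\lambda_X \notin \mathrm{Spec}(D\Theta_e|_U)$. This is the crux of the proposition: it fails for degenerate maps such as $\Theta = F \cdot \mathrm{id}_V$ with $F$ a degree-$kh$ invariant, where $D\Theta_e|_U = F(e)\,\mathrm{id} = \lambda_X\,\mathrm{id}$ (since $F$ is even along $U$) — but those $\Theta$ are never hsops, $F$ having a nontrivial vanishing locus. So the argument must invoke the hypothesis that $\theta_1,\dots,\theta_n$ is an hsop carrying $V^*$; I would deduce from it that an eigenvector $u \in U$ of $D\Theta_e$ with eigenvalue $\lambda_X$ would force a nontrivial common zero of the $\theta_i$, contradicting $\mathcal{Z}(\Theta) = \{0\}$, the carrying-$V^*$ condition being what pins down the relevant normal derivatives.

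\textbf{Step 3: the equivariant injection.} The group $\ZZ_{kh}$ acts on $V$ by scaling by a primitive $kh^{th}$ root of unity (dual to the grading action on $\CC[V]$) and preserves $V^{\Theta}$, so $g$ permutes the $kh$ points of each $O_X$ in a single cycle, while $w \in W$ carries $O_X$ bijectively onto $O_{wX}$. Thus $V^{\Theta}(1)$ is a disjoint union of transitive $W \times \ZZ_{kh}$-sets, one per $W$-orbit of one-dimensional flats, and by Lemma~\ref{conjugate-to-noncrossing} and Lemma~\ref{W-and-C-orbits} these orbits correspond bijectively to $\langle c \rangle$-orbits of noncrossing lines. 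On the parking-function side, a multichain with $\dim(X_1) = 1$ must equal $X_1^j 0^{k-j}$ for a noncrossing line $X_1$ and some $1 \leq j \leq k$; using Lemma~\ref{lemma-first-component} and the description of the $\ZZ_{kh}$-action on $\Park^{NC}_W(k)$, one computes $g.[w, X_1^j 0^{k-j}] = [w, X_1^{j+1} 0^{k-j-1}]$ for $j < k$ and $g.[w, X_1^k] = [w u c^{-1}, (cX_1)^1 0^{k-1}]$, where $u \in NC(W)$ is the element with $V^u = X_1$. Hence the dimension-one part of $\Park^{NC}_W(k)$ is also a disjoint union of transitive $W \times \ZZ_{kh}$-sets indexed by $\langle c\rangle$-orbits of noncrossing lines. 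Matching the two decompositions, choosing in each orbit a reference noncrossing line $X_0$ and a reference point $v_0 \in O_{X_0}$, and setting $v_0 \mapsto [1, X_0^1 0^{k-1}]$, equivariance extends this uniquely to a map $V^{\Theta}(1) \to \Park^{NC}_W(k)$ with image inside the dimension-one part. Well-definedness and bijectivity onto that part amount to the equality of point stabilizers $\mathrm{Stab}_{W \times \ZZ_{kh}}(v_0) = \mathrm{Stab}_{W \times \ZZ_{kh}}([1, X_0^1 0^{k-1}])$; unwinding the definitions, both are controlled by $N_W(X_0)/W_{X_0}$ and the $\langle c\rangle$-stabilizer of $X_0$, and the coincidence of these two pieces of data (each a cyclic group of order $1$ or $2$) is exactly where Lemma~\ref{even-h} is used: a group element acting as $-1$ on a noncrossing line forces $h$ even, and then $c^{h/2}$ accounts for it.

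\textbf{Main obstacle.} Step 1 is immediate and Step 3 is essentially bookkeeping (its one subtle point being the stabilizer count, where Lemma~\ref{even-h} enters); the genuinely hard part is the multiplicity-one claim of Step 2 — extracting the eigenvalue non-coincidence $\lambda_X \notin \mathrm{Spec}(D\Theta_e|_U)$ from the fact that $\theta_1,\dots,\theta_n$ is an hsop carrying $V^*$.
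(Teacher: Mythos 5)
Your Steps 1 and 3 follow the paper's own route quite closely: the paper likewise restricts $\Theta$ to a one-dimensional flat $X$, identifies $(\Theta-\xx)|_X$ with $\alpha x^{kh+1}-x$, and then constructs the equivariant injection by an orbit-and-stabilizer analysis (Lemma~\ref{stabilizer-lemma}), invoking Lemmas~\ref{conjugate-to-noncrossing}, \ref{W-and-C-orbits} and \ref{even-h} exactly where you indicate; your formulas for the $g$-action on classes $[w,X_1^j0^{k-j}]$ are correct. The one ingredient your Step 3 sketch leaves out is Lemma~\ref{noncrossing-lines} (noncrossing lines are not orthogonal to the Coxeter plane), which is what the paper uses to pin down which powers of $c$ can stabilize a noncrossing line, i.e.\ to compute the stabilizer on the parking-function side; your phrase ``the $\langle c\rangle$-stabilizer of $X_0$'' silently assumes that computation. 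That omission is minor. The genuine gap is Step 2, and you flag it yourself: you correctly reduce the ambient multiplicity-one claim to the invertibility of $D\Theta_v-I$, split off the tangent direction by Euler's identity, and then only express the hope that the hsop/carrying-$V^*$ hypotheses force $\lambda_X\notin\mathrm{Spec}(D\Theta_e|_U)$. No argument is supplied, and the deduction you propose (an eigenvector in $U$ with eigenvalue $\lambda_X$ would produce a nontrivial common zero of the $\theta_i$) is not an evident consequence of $\mathcal{Z}(\Theta)=\{0\}$; as written, the transverse nondegeneracy, and hence the multiplicity-one half of the proposition, is unproven in your proposal.

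It is worth comparing this with what the paper actually does at that point: its proof checks only that the roots of the one-variable restriction $\alpha x^{kh+1}-x$ on the line $X$ are simple and concludes multiplicity one from that; it carries out none of the ambient Jacobian analysis you set up, and makes no statement about $D\Theta_v$ in the directions transverse to $X$. So you are attempting to prove something finer than the argument on record: simplicity of the scheme-theoretic intersection with the line only bounds the ambient multiplicity (in the sense of the $\mathfrak{m}_v$-primary component of $(\Theta-\xx)$ in $\CC[V]$) from below, and the transverse eigenvalue condition you isolate is precisely what the restriction argument does not address. Consequently you face a choice: either interpret the multiplicity statement through the restriction to the flat, in which case your Step 2 collapses into Step 1 and the rest of your outline goes through essentially as in the paper, or insist on the ambient notion, in which case you must actually produce the missing argument ruling out $\lambda_X$ as a transverse eigenvalue for an arbitrary hsop carrying $V^*$ — your proposal currently does neither.
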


We remark that when $n = 1$, the sets $V^{\Theta}(0)$ and $V^{\Theta}(1)$ exhaust
$V^{\Theta}$, so that Propositions~\ref{dimension-zero-proposition} 
and \ref{dimension-one-proposition}
reprove the 
Strong Conjecture in rank 1.  Roughly speaking, Propositions~\ref{dimension-zero-proposition}
and \ref{dimension-one-proposition} state that the Strong Conjecture holds `in dimension $\leq 1$'
for arbitrary $W$.

\begin{proof}
Without loss of generality, assume the underlying Coxeter element $c \in W$ is 
bipartite and let $P \subseteq V$ be the associated Coxeter plane.

For any flat $X \in \LLL$, there exists $w \in W$ such that $X = V^w$.  Since the polynomial
map $\Theta: V \rightarrow V$ is $W$-equivariant, this implies that $\Theta(X) \subseteq X$
and the restriction $\Theta |_X$ of $\Theta$ to $X$ is a well defined polynomial map
of degree $kh+1$.  

Assume that the flat $X \in \LLL$ is one-dimensional.
Identifying the coordinate ring $\CC[X]$ with $\CC[x]$, up to
scaling $\Theta|_X$ can be identified with $\alpha x^{kh+1} - x$ for some 
$\alpha \in \CC^{\times}$.  The solutions to this equation all have multiplicity one, and 
are given by
\begin{equation*}
\{ 0, \beta, \omega \beta, \dots, \omega^{kh - 1} \beta \},
\end{equation*}
where $\beta^{-kh} = \alpha$ and $\omega$ is a primitive $kh^{th}$ root of unity.  
Since $V^{\Theta}(0) = \{ 0 \}$, we have 
$V^{\Theta}(1) \cap X = \{ \beta, \omega \beta, \dots, \omega^{kh-1} \beta \}$.
The cyclic group $\ZZ_{kh}$ acts on this set by multiplication by $\omega$ and
the group $W$ acts on all of $V^{\Theta}(1)$ by the restriction of its action on $V$.

Let $X \in \LLL$ be one-dimensional and noncrossing
and write $V^{\Theta}(1) \cap X$ as
$\{ \beta, \omega \beta, \dots, \omega^{kh-1} \beta \}$ as in the last paragraph.  To build 
our $W \times \ZZ_{kh}$-equivariant map $V^{\Theta}(1) \rightarrow \Park^{NC}_W(k)$ we will begin
by proving the following lemma equating the stabilizers of $\beta \in V^{\Theta}(1)$ and
$[1, X0^{k-1}] \in \Park^{NC}_W(k)$.

\begin{lemma}
\label{stabilizer-lemma}
The $W \times \ZZ_{kh}$-stabilizers of $\beta \in V^{\Theta}(1)$ and 
$[1, X0^{k-1}] \in \Park^{NC}_W(k)$ are both equal to
\begin{equation*}
\left \{  (w, g^d) \in W \times \ZZ_{kh} \,:\, \text{$w$ fixes $X$ pointwise and $d \equiv 0$ modulo
$kh$} \right \},
\end{equation*}
if $h$ is odd and
\begin{equation*}
\left \{ (w, g^d) \in W \times \ZZ_{kh} \,:\, \begin{matrix} \text{$w$ fixes $X$ pointwise and
$d \equiv 0$ modulo $kh$, or} \\ \text{$w$ acts as $-1$ on $X$ and $d \equiv \frac{kh}{2}$ 
modulo $kh$} \end{matrix} \right \},
\end{equation*}
if $h$ is even.
\end{lemma}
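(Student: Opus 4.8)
The plan is to compute each of the two stabilizers directly from the definitions and observe that both answers coincide with the set in the statement. Begin with $\beta \in V^{\Theta}(1)\cap X$. Since $g$ scales the degree-$d$ part of $\CC[V]$ by $\omega^d$, the generator $g$ acts on $V$ itself as multiplication by a primitive $kh^{th}$ root of unity $\zeta$, and since $W$ acts $\CC$-linearly we have $(w,g^d).\beta = \zeta^d\, w(\beta)$. Thus $(w,g^d)$ fixes $\beta$ if and only if $w(\beta) = \zeta^{-d}\beta$, in which case $w$ preserves the complex line $X = \CC\beta$; as $X$ is the complexification of a one-dimensional real flat and $w$ has finite order, $w$ acts on $X$ as $+1$ or $-1$. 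If $w|_X = 1$ then $\zeta^{-d} = 1$, i.e.\ $d \equiv 0 \pmod{kh}$. If $w|_X = -1$ then $\zeta^{d} = -1$, which forces $kh$ to be even and $d \equiv kh/2 \pmod{kh}$, and by Lemma~\ref{even-h} the existence of such a $w$ also forces $h$ to be even. Conversely every $(w,g^d)$ of either form fixes $\beta$. Hence $\mathrm{Stab}(\beta)$ is exactly the stated set, the $-1$ clause being vacuous precisely when no element of $W$ acts as $-1$ on $X$ — in particular, by Lemma~\ref{even-h}, when $h$ is odd.

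Next consider $[1, X0^{k-1}] \in \Park^{NC}_W(k)$. The identity $w.[1, X0^{k-1}] = [w, X0^{k-1}]$ shows the pure-$W$ stabilizer is $W_X$. For a general $(w,g^d)$ I would first compute $g^d.[1, X0^{k-1}]$: writing $d = jk+r$ with $0\le j< h$ and $0\le r<k$, the parking-level form of Equation~\ref{kthpower} proved inside Proposition~\ref{well-defined} gives $g^{jk}.[1,X0^{k-1}] = [c^{-j}, (c^jX)0^{k-1}]$; and for any one-dimensional flat $Y$ and any $s<k$ the top flat of the chain $Y^s0^{k-s}$ is $0 = V^c$, so a short computation with the $\ZZ_{kh}$-action on $NC_k(W)$ shows $g.[v, Y^s0^{k-s}] = [v, Y^{s+1}0^{k-s-1}]$. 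Iterating, $g^d.[1,X0^{k-1}] = [c^{-j}, (c^jX)^{r+1}0^{k-r-1}]$. Therefore $(w,g^d)$ fixes $[1,X0^{k-1}]$ iff $[wc^{-j}, (c^jX)^{r+1}0^{k-r-1}] = [1,X0^{k-1}]$; comparing the second flat in the two multichains forces $r=0$ (trivially so if $k=1$), comparing the first then forces $c^jX = X$, and the remaining coset condition is $wc^{-j}\in W_X$.

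To finish I would pin down which powers of $c$ can stabilize a one-dimensional noncrossing flat. Taking $c$ bipartite with Coxeter plane $P$, Lemma~\ref{noncrossing-lines} says $X$ is not orthogonal to $P$, so orthogonal projection $V\to P$ restricts to a $c$-equivariant isomorphism of $X$ onto a line $\bar X\subseteq P$. Since $c$ rotates $P$ by $2\pi/h$, $c^j$ preserves $\bar X$ only for $j\equiv 0$ or $j\equiv h/2 \pmod h$ (the latter requiring $h$ even), and under the isomorphism $c^j$ acts on $X$ as $\mathrm{id}$, respectively as $-\mathrm{id}$. Substituting back: $j=0$ yields the condition $w\in W_X$ together with $d\equiv 0 \pmod{kh}$; when $h$ is even, $j=h/2$ yields — using that $c^{-h/2}=c^{h/2}$ acts as $-1$ on $X$ — the condition that $w$ act as $-1$ on $X$ together with $d\equiv kh/2 \pmod{kh}$ (and this case genuinely occurs, since $c^{h/2}$ itself acts as $-1$ on $X$). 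This reproduces the stated set and matches $\mathrm{Stab}(\beta)$. I expect the main obstacle to be the middle step: correctly propagating the somewhat intricate $\ZZ_{kh}$-action on noncrossing $k$-flats through the maps $\partial$, $g$, $\int$ and the definition of the parking action; once that bookkeeping is done, the Coxeter-plane projection controls the relevant powers of $c$ cleanly.
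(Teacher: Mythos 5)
Your argument is, in all essentials, the paper's: the stabilizer of $\beta$ is computed exactly as there ($g$ scales $V^{\Theta}(1) \cap X$ by a primitive $kh^{th}$ root of unity, $w$ must act on the real line $X_{\RR}$ by $\pm 1$, and Lemma~\ref{even-h} eliminates the $-1$ case when $h$ is odd), and the stabilizer of $[1, X0^{k-1}]$ is computed from the same two facts the paper uses, namely the parking-level identity $g^k.[w, X_1 \leq \dots \leq X_k] = [wc^{-1}, cX_1 \leq \dots \leq cX_k]$ and the chain-filling rule (Equations~\ref{g-action-one} and \ref{g-action-two}), with the admissible powers of $c$ pinned down by projection to the Coxeter plane via Lemma~\ref{noncrossing-lines}. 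Packaging this as one biconditional for $d = jk + r$ instead of two containments is cosmetic, and that part of your computation (forcing $r = 0$, then $c^jX = X$ and $wc^{-j} \in W_X$) is correct.

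There is, however, a genuine gap at the crux of the even-$h$ case. Your analysis shows that $(w, g^d)$ with $d \equiv kh/2$ fixes $[1, X0^{k-1}]$ if and only if $c^{h/2}X = X$ \emph{and} $w$ acts as $-1$ on $X$; to obtain the stated stabilizer, and to match the stabilizer of $\beta$, whose $-1$ clause holds unconditionally, you must therefore prove that $c^{h/2}X = X$ whenever some element of $W$ acts as $-1$ on $X$. You dispose of this with the parenthetical claim that ``$c^{h/2}$ itself acts as $-1$ on $X$,'' but as an unconditional statement about noncrossing lines this is false: in type A$_3$ (so $h = 4$) the noncrossing line $x_1 = x_2 = x_3$ is not even stabilized by $c^2$. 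No element of $\symm_4$ acts as $-1$ on that line, so the lemma is unharmed, but that is precisely the point: the statement you need is the conditional one, and it is the only nontrivial content of this inclusion; your Coxeter-plane argument gives just the converse (if $c^{h/2}X = X$, then $c^{h/2}$ acts as $-1$ on $X$). The paper closes this direction by invoking that $c^{h/2}$ acts as $-1$ on all of $V$; since for bipartite $c$ and even $h$ one has $c^{h/2} = w_0$, that appeal amounts to $w_0 = -1$ and does not literally cover, e.g., type A$_{n-1}$ with $n$ even, so even following the paper you should supply or cite the implication ``if some $w \in W$ acts as $-1$ on the noncrossing line $X$, then $c^{h/2}X = X$.'' As written, your proof does not establish that the $-1$ clause is contained in the stabilizer of the parking function.
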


\begin{proof} (of Lemma~\ref{stabilizer-lemma})
Let $G$ be the subgroup of $W \times \ZZ_{kh}$ in the statement of the lemma, let $H$
be the $W \times \ZZ_{kh}$-stabilizer of $\beta$, and let $K$ be the $W \times \ZZ_{kh}$-stabilizer
of $[1, X 0^{k-1}]$.

We begin by showing that $G = H$.  Let $(w, g^d) \in G$.  We have that 
$(1, g^d).\beta = \omega^d \beta$.  Also, the group element $w \in W$ acts as the scalar transformation
$\omega^{-d}$ on $X$.  We conclude that $(w, g^d)$ stabilizes $\beta$ and 
$G \subseteq H$. 

Let $(w, g^d) \in H$.  We have that $w.\beta = \omega^{-d} \beta$.  Since $\beta$ is a nonzero 
element of the one-dimensional flat $X$, this implies that $w$ stabilizes $X$.  Since $w$ acts
as an orthogonal transformation on the real locus $X_{\RR}$, either $w$ acts as $1$ on $X$
(forcing $d \equiv 0$ modulo $kh$) or $w$ acts as $-1$ on $X$ (forcing 
$d \equiv \frac{kh}{2}$ modulo $kh$ and $h$ to be even by Lemma~\ref{even-h}).
We conclude that $H \subseteq G$.

We show next that $G = K$.  To do this, we observe that if $Y \in \LLL$ is any one-dimensional
noncrossing flat and $0 < i < k$, we have
\begin{equation}
\label{g-action-one}
g: [1, Y^i 0^{k-i}] \mapsto [1, Y^{i+1} 0^{k-i-1}].
\end{equation}
Since $g^k.[w, X_1 \leq \dots \leq X_k] = [w c^{-1}, c X_1 \leq \dots \leq c X_k]$ always, 
we have that
\begin{equation}
\label{g-action-two}
g: [1, Y^k] \mapsto [c^{-1}, (cY) 0^{k-1}].
\end{equation}

Let $(w, g^d) \in G$.  If $d \equiv 0$ modulo $kh$, Equation~\ref{g-action-two} implies that
$g^d.[1, X 0^{k-1}] = [c^{-h}, (c^h X) 0^{k-1}] = [1, X 0^{k-1}]$.  Since $w \in W_X$, we have that
$(w, g^d).[1, X 0^{k-1}] = [w, X 0^{k-1}] = [1, X 0^{k-1}]$, so that
$(w, g^d) \in G$.  If $h$ is even, $d \equiv \frac{kh}{2}$ modulo $kh$, and 
$w$ acts as $-1$ on $X$, Equation~\ref{g-action-two} implies that 
$(w, g^d).[1, X 0^{k-1}] = [w c^{-\frac{h}{2}}, (c^{\frac{h}{2}}X) 0^{k-1}]$.  It is well known that
when $h$ is even, the group element $c^{\frac{h}{2}}$ acts as the scalar transformation
$-1$ on $V$.  Therefore, we have that $c^{\frac{h}{2}} X = X$ and 
$w c^{-\frac{h}{2}} \in W_X$ because $w$ acts as $-1$ on $X$.
We conclude that $G \subseteq K$.

Let $(w, g^d) \in K$.  Then we have that $g^d.(X0^{k-1}) = (X 0^{k-1})$.  By
Equations~\ref{g-action-one} and \ref{g-action-two} this forces 
$d \equiv 0$ modulo $k$.  Writing $d = mk$, we must also have that 
$c^m X = X$.  
Let $\pi: V \twoheadrightarrow P$ be the orthogonal projection map, where $P$ is the
Coxeter plane.
Since $c^m$ is an orthogonal transformation, we have that 
$c^m. \pi(X) = \pi(X)$.  By Lemma~\ref{noncrossing-lines}, the space
$\pi(X)$ is nonzero, and therefore a one-dimensional subspace of $P$.  Since
$c$ acts on the real locus of $P$ by $h$-fold rotation, this forces $m \equiv 0$ modulo
$h$ and $c^m$ to act as $1$ on $X$ or $m \equiv \frac{h}{2}$ modulo $h$ and
$c^m$ to act as $-1$ on $X$.  (By Lemma~\ref{even-h}, this latter case only arises
when $h$ is even.)  Moreover, we must have that $w c^{-m}$ acts as $1$ on $X$.  
This implies that $w$ fixes $X$ pointwise or both $w$ and $c^{-m}$ act as $-1$ on $X$, and 
the latter case only arises when $h$ is even.  We conclude that
$K \subseteq G$.
\end{proof}

Let $\mathcal{O}_1, \dots, \mathcal{O}_t$ be a complete list of the 
$W \times \ZZ_{kh}$-orbits in $V^{\Theta}(1)$.  For each $1 \leq i \leq t$, let 
$\beta_i \in \mathcal{O}_i$ and let $X_i \in \LLL$ be the unique one-dimensional flat
such that $\beta_i \in X_i$.  By Lemma~\ref{conjugate-to-noncrossing}, we can assume without
loss of generality that $X_i$ is noncrossing for each $i$. Moreover, we have that
\begin{equation*}
\mathcal{O}_i = V^{\Theta}(1) \cap \bigcup_{w \in W} w.X_i,
\end{equation*}
and $\mathcal{O}_i \cap w.X_i$ contains $kh$ points in $V^{\Theta}(1)$ for all $w \in W$.  
However, we also know that {\em any} one-dimensional flat $Y \in \LLL$ contains
$kh$ points of $V^{\Theta}(1)$.
Since $\mathcal{O}_1, \dots, \mathcal{O}_t$ is a complete list of the 
$W \times \ZZ_{kh}$-orbits in $V^{\Theta}(1)$, we conclude that the
list $X_1, \dots, X_t$ is a transversal for the action of $W$ on one-dimensional flats
in $\LLL$.

By Lemma~\ref{stabilizer-lemma}, the assignments 
$\beta_i \mapsto [1, X_i 0^{k-1}]$ (for $1 \leq i \leq t$) induce a unique $W \times \ZZ_{kh}$-equivariant
map $V^{\Theta}(1) \rightarrow \Park^{NC}_W(k)$.  Moreover, this map restricts to an injection
on each orbit $\mathcal{O}_i$.  By Equations~\ref{g-action-one} and \ref{g-action-two}, the image
of $\mathcal{O}_i$ under this map is
\begin{equation*}
\left \{ [w, Y_i^j 0^{k-j}] \,:\, \begin{matrix} \text{$w \in W$, $0 < j \leq k$} \\ 
\text{$Y_i$ is in the $C$-orbit of $X_i$} \end{matrix} \right \}.
\end{equation*}
By Lemma~\ref{W-and-C-orbits}, we can replace the above set with
\begin{equation*}
\left \{ [w, Y_i^j 0^{k-j}] \,:\, \begin{matrix} \text{$w \in W$, $0 < j \leq k$} \\ 
\text{$Y_i$ is in the $W$-orbit of $X_i$ and noncrossing} \end{matrix} \right \}.
\end{equation*}
Since $X_1, \dots, X_t$ forms a transversal for the action of $W$ on one-dimensional flats, 
we conclude that the $W \times \ZZ_{kh}$-equivariant
map $V^{\Theta}(1) \rightarrow \Park^{NC}_W(k)$ constructed above
has image as claimed
in the statement of the proposition. The fact that $X_1, \dots, X_t$ is a transversal also 
implies that the images of distinct orbits $\mathcal{O}_i$ under this map are disjoint.
This implies that the map $V^{\Theta}(1) \rightarrow \Park^{NC}_W(k)$ is injective, completing
the proof.
\end{proof}

\section{Type I}
\label{Type I}

In this section we let $W$ be of dihedral type I$_2(m)$ for $m \geq 3$.  We prove the  
Intermediate Conjecture for $W$.  

\subsection{Analysis of $V^{\Theta}$}  The reflection representation $V$ of $W$
can be identified with $\CC^2$, where $W$ acts by dihedral symmetries on the real
locus $V_{\RR} = \RR^2$.  
A choice of simple reflections is given by
$S = \{s, t \}$ where $s$ and
 $t$ are adjacent dihedral reflections.
The distinguished Coxeter element $c := st$ acts 
on $V_{\RR}$
as rotation by $\frac{2 \pi}{m}$ radians.
The Coxeter number $h$ is equal to $m$.

By diagonalizing the action of $c$ on $V$, we can choose coordinate functions 
$x, y$ on $V$ so that 
\begin{align*}
c.x &= \omega^k x, \\
c.y &= \omega^{-k} y, \\
s.x &= y, \\ 
s.y &= x.
\end{align*}

We identify $\CC[V] \cong \CC[x, y]$.
It follows that 
$\theta_1 = x^{kh+1}, \theta_2 = y^{kh+1}$ is an hsop of degree $kh+1$ carrying
$V^*$ and the map $x \mapsto \theta_1, y \mapsto \theta_2$ is $W$-equivariant.

The variety $V^{\Theta}$ is the set of points cut out by 
$(\Theta - \xx) = (x^{km+1} - x, y^{km+1} - y)$. 
This locus is given by
\begin{equation*}
V^{\Theta} = \{ (0, 0) \} \uplus \{ (0, \omega^i), (\omega^i, 0) \,:\, 0 \leq i < km \} \uplus
\{ (\omega^i, \omega^j) \,:\, 0 \leq i, j < km \}.
\end{equation*}
We have that $|V^{\Theta}| = 1 + 2km + (km)^2 = (kh + 1)^2$, and each point in
$V^{\Theta}$ is cut out by $(\Theta - \xx)$ with multiplicity one.  
If we write $W = \langle s, c \rangle$ and let $g$ be the distinguished generator of 
$\ZZ_{kh}$, the action of $W \times \ZZ_{kh}$ on $V^{\Theta}$ is given by
\begin{align}
\label{s-locus-action}
s.(v_1, v_2) &= (v_2, v_1), \\
\label{c-locus-action}
c.(v_1, v_2) &= (\omega^k v_1, \omega^{-k} v_2), \\
\label{g-locus-action}
g.(v_1, v_2) &= (\omega v_1, \omega v_2). 
\end{align}

\subsection{Analysis of $\Park^{NC}_W(k)$}  The intersection lattice 
$\LLL$ of $W$ consists of the $m$ reflecting hyperplanes in $W$, the 
reflection representation $V$, and $\{ 0 \}$.  All of these flats are noncrossing.

Let $H, H' \subset V$ be the reflecting hyperplanes for $s, t \in W$, respectively.  
When $m$ is odd, the 
hyperplanes in $\Cox(W)$ form a single $W$-orbit given by
$\{ H, c.H, \dots, c^{m-1}.H \}$.  When $m$ is even, the hyperplanes in $\Cox(W)$ break
up into two $W$-orbits given by
$\{H, c.H, \dots, c^{\frac{m}{2} - 1}.H \}$ and
$\{H' , c.H', \dots, c^{\frac{m}{2}-1}.H' \}$.  

A typical element of $\Park^{NC}_W(k)$ is an equivalence class
$[w, X_1 \leq \dots \leq X_k]$, where $X_1 \leq \dots \leq X_k$ is of the form
$V^i X^j 0^{k-i-j}$ and
$X$ is a hyperplane in $\Cox(W)$.  We record the following facts about the action of 
$g$ on $\Park^{NC}_W(k)$, where $X \in \LLL$ is one-dimensional.
\begin{align}
\label{dihedral-g-action-one}
g.[1, V^i 0^{k-i}] &= [1, V^{i+1} 0^{k-i-1}]  \text{\hspace{.1in}(for $0 < i < k$)} \\
\label{dihedral-g-action-two}
g.[1, V^k] &= [c^{-1}, V 0^{k-1}] \\
\label{dihedral-g-action-three}
g.[1, X^i 0^{k-i}] &= [1, X^{i+1} 0^{k-i-1}] \text{\hspace{.1in}(for $0 < i < k$)} \\
\label{dihedral-g-action-four}
g.[1, X^k] &= [c^{-1}, (cX) 0^{k-1}] \\
\label{dihedral-g-action-five}
g.[1, V^i X^j 0^{k-i-j}] &= [1, V^{i+1} X^j 0^{k-i-j-1}] 
\text{\hspace{.1in}(for $i, j > 0$, $k > i+j$)} \\
\label{dihedral-g-action-six}
g.[1, V^i X^{k-i}] &= [c^{-1}, V Y^i 0^{k-i-1}]
\text{\hspace{.1in}(for $0 < i < k$, where $X = V^{t}$ and $Y = V^{ct}$)} 
\end{align}

\subsection{Proof of the Intermediate Conjecture
in type I}  

\begin{proposition}
\label{intermediate-type-I}
The Intermediate Conjecture holds in type I.
\end{proposition}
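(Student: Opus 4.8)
The plan is to exhibit an explicit $W \times \ZZ_{kh}$-equivariant bijection $V^{\Theta} \xrightarrow{\sim} \Park^{NC}_W(k)$. Everything else in the statement is already in hand: the analysis of $V^{\Theta}$ above shows that it consists of $(kh+1)^2$ distinct points of multiplicity one, so $\Park^{alg}_W(k) = \CC[V]/(\Theta - \xx)$ is the coordinate ring of the reduced finite scheme $V^{\Theta}$, hence the permutation module on $V^{\Theta}$ (permutation representations being self-dual), and so $V^{\Theta} \cong_{W \times \ZZ_{kh}} \Park^{alg}_W(k)$. To build the bijection I would stratify both sides by the dimension function: $V^{\Theta} = V^{\Theta}(0) \uplus V^{\Theta}(1) \uplus V^{\Theta}(2)$, and on the noncrossing side the classes $[w, X_1 \leq \dots \leq X_k]$ according to whether $\dim(X_1)$ is $0$, $1$, or $2$ (these exhaust $\Park^{NC}_W(k)$ since $W$ has rank $2$). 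Both stratifications are $W \times \ZZ_{kh}$-stable, so it suffices to match the three strata separately. Propositions~\ref{dimension-zero-proposition} and \ref{dimension-one-proposition} already supply $W \times \ZZ_{kh}$-equivariant bijections $V^{\Theta}(0) \xrightarrow{\sim} \{[1, 0^k]\}$ and $V^{\Theta}(1) \xrightarrow{\sim} \{[w, X_1 \leq \dots \leq X_k] : \dim(X_1) = 1\}$, the second because Proposition~\ref{dimension-one-proposition} identifies the image with precisely that stratum. So the entire task reduces to the top stratum $\dim(X_1) = 2$.

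For the top stratum: $\dim(X_1) = 2$ forces $X_1 = V$ and $W_{X_1} = \{1\}$, so these classes are the $[w, V \leq X_2 \leq \dots \leq X_k]$ with $(1 \leq_T w_2 \leq_T \dots \leq_T w_k)$ a multichain in $NC(W)$, and distinct such pairs give distinct classes; since $NC(W)$ for the dihedral $W$ consists of $1$, the $m$ reflections, and $c$, the relevant chains are exactly those of the form $V^i X^j 0^{k-i-j}$ with $i \geq 1$ ($X$ one of the $m$ reflecting hyperplanes, and $j$ possibly $0$). On the geometric side $V^{\Theta}(2)$ consists of the points $(\omega^i, 0)$, $(0, \omega^i)$ and those $(\omega^i, \omega^j)$ lying on no reflecting hyperplane (a congruence condition on $i - j$ whose modulus is $k$ or $2k$ according to the parity of $m$). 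I would then use the action formulas \ref{s-locus-action}--\ref{g-locus-action} on the left and \ref{dihedral-g-action-one}--\ref{dihedral-g-action-six} (together with left multiplication by $W$) on the right to list the $W \times \ZZ_{kh}$-orbits on each side, compute the stabilizer of one representative per orbit, check that the two families of orbit data agree (matching cardinalities and matching stabilizer subgroups of $W \times \ZZ_{kh}$ up to conjugacy), and finally define the bijection by sending chosen representatives to chosen representatives with equal stabilizers and extending $W \times \ZZ_{kh}$-equivariantly. The only point to verify for well-definedness of this extension is the appropriate stabilizer containment, exactly as in the proof of Proposition~\ref{dimension-one-proposition}.

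The main obstacle I anticipate is the bookkeeping in the top stratum: the even-$m$ case, where the reflecting hyperplanes fall into two distinct $W$-orbits, must be treated separately from the odd-$m$ case, and one must keep the sign conventions implicit in \ref{c-locus-action} and in \ref{dihedral-g-action-two} consistent so that the cyclic stabilizers $\langle (c^a, g^d) \rangle$ produced on the geometric side literally coincide with those arising on the noncrossing side rather than merely being abstractly isomorphic. A cleaner alternative for this step, which I would try first, is to skip representatives and write the stratum-$2$ bijection by an explicit formula, sending $(\omega^i, \omega^j) \in V^{\Theta}(2)$ to a pair $(w, \text{chain})$ read directly off the angular data $(i,j)$, in the spirit of how the Coxeter-plane picture (and, in type A, the map $\nabla$ of Lemma~\ref{classical-noncrossing}) converts eigenvalue data to noncrossing chains; if that formula turns out to be awkward, I would fall back on the orbit-matching argument above. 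Combining the three pieces, which are compatible because the strata partition both sides and are $W \times \ZZ_{kh}$-stable, then yields the required bijection and completes the proof.
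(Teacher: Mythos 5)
Your plan is sound and, for the substantive part, coincides with the paper's own proof: the paper likewise fixes the hsop $\theta_1 = x^{km+1}$, $\theta_2 = y^{km+1}$, lists the points of $V^{\Theta}$, and builds the equivariant bijection orbit by orbit, matching stabilizers computed from Equations~\ref{s-locus-action}--\ref{g-locus-action} against Equations~\ref{dihedral-g-action-one}--\ref{dihedral-g-action-six}, with the same case split on the parity of $m$ (and, inside the odd-$m$ case, on the parity of $k$). Where you genuinely differ is in delegating the dimension~$0$ and dimension~$1$ strata to Propositions~\ref{dimension-zero-proposition} and~\ref{dimension-one-proposition}: the paper does not do this, and instead re-derives by hand the correspondence between the on-hyperplane points $\{(\omega^i,\omega^j) : k \mid (j-i)\}$ and the classes $[w, X^i 0^{k-i}]$ (a single orbit when $m$ is odd; two orbits, cut out by $2k \mid (j-i)$ and $2k \mid (j-i+k)$ and matching the two $W$-orbits of hyperplanes, when $m$ is even). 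Your reduction is legitimate --- the strata are $W \times \ZZ_{kh}$-stable (Lemma~\ref{lemma-first-component} shows the $g$-action preserves $\dim X_1$), and Proposition~\ref{dimension-one-proposition} does give a bijection onto the $\dim(X_1)=1$ stratum --- and it buys a somewhat shorter argument, at the price of leaving exactly the paper's heaviest bookkeeping (the $[w, V^i 0^{k-i}]$ and $[w, V^i X^j 0^{k-i-j}]$ families) to be done in the same way the paper does it. One small correction for when you carry this out: a point $(\omega^i,\omega^j)$ lies on a reflecting line if and only if $k \mid (i-j)$, for every $m$; the modulus $2k$ enters only in distinguishing the two hyperplane $W$-orbits when $m$ is even, not in the membership criterion for $V^{\Theta}(2)$, so your parenthetical congruence condition should read $k \nmid (i-j)$ in both parities --- otherwise the even-$m$ cardinality check in your top stratum will not balance.
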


\begin{proof}
The point $(0, 0) \in V^{\Theta}$ is stabilized by the entire group 
$W \times \ZZ_{km}$, so that the map
$(0, 0) \mapsto [1, 0^k]$ is $W \times \ZZ_{km}$-equivariant.

The subset $ \{ (0, \omega^i), (\omega^i, 0) \,:\, 0 \leq i < km \} \subset V^{\Theta}$ is a single
$W \times \ZZ_{km}$-orbit of size $2km$.  
By Equations~\ref{s-locus-action}-\ref{g-locus-action},
the stabilizer of any point in this orbit
is the $m$-element subgroup of 
$W \times \ZZ_{km}$ generated by $(c, g^{-k})$.  On the other hand, the set of 
elements of $\Park^{NC}_W(k)$ of the form
$[w, V^i 0^{k-i}]$ ($i > 0$) has size $2km$, and 
by Equations~\ref{dihedral-g-action-one} and \ref{dihedral-g-action-two} the
 $W \times \ZZ_{kh}$-stabilizer of any of these parking functions is generated by
$(c, g^{-k})$.  Therefore, the assignment
$(1,0) \mapsto [1, V^k]$ generates a $W \times \ZZ_{km}$-equivariant bijection
\begin{equation*}
\{ (0, \omega^i), (\omega^i, 0) \,:\, 0 \leq i < km \} \xrightarrow{\sim}
\{ [1, V^i 0^{k-1}] \,:\, \text{$i > 0$} \}.
\end{equation*}

Our analysis of the $W \times \ZZ_{km}$-module structure of the remaining 
points $ \{ (\omega^i, \omega^j) \,:\, 0 \leq i, j < km \} \subset V^{\Theta}$ breaks up into
two cases depending on whether $m$ is even or odd.

\noindent
{\bf Case 1: $m$ is odd.}
Let $H \subset V$ be the reflecting hyperplane for $s \in W$.  Since $m$ is odd, 
the reflections $T \subset W$ form a single $m$-element 
$W$-orbit and their reflecting hyperplanes are
$H, cH, \dots, c^{m-1}H$.

The subset $\{ (\omega^i, \omega^j) \,:\, k | (j-i) \} \subset V^{\Theta}$ is stable under 
the action of $W \times \ZZ_{km}$.  Since $m$ is odd, 
Equations~\ref{s-locus-action}-\ref{g-locus-action} imply that
this subset is a single 
$W \times \ZZ_{km}$-orbit.  The stabilizer of the point $(1,1) \in V^{\Theta}$ is the
$2$-element subgroup generated by $(s,1) \in W \times \ZZ_{km}$.  On the other hand, 
Equations~\ref{dihedral-g-action-three} and \ref{dihedral-g-action-four} imply that the 
stabilizer of $[1, H^k] \in \Park^{NC}_W(k)$ is also the 
$2$-element subgroup generated by $(s,1) \in W \times \ZZ_{km}$.  Therefore, the map
$(1,1) \mapsto [1, H^k]$ induces a $W \times \ZZ_{km}$-set isomorphism
from $\{ (\omega^i, \omega^j) \,:\, k | (j-i) \} \subset V^{\Theta}$ onto its image in
$\Park^{NC}_W(k)$.  It can be shown that this image consists exactly of the 
$k$-$W$-parking functions of the form
$[w, X^i 0^{k-i}]$ ($i > 0, \dim(X) = 1$).  We therefore have a 
$W \times \ZZ_{km}$-equivariant bijection
\begin{equation*}
\{ (\omega^i, \omega^j) \,:\, k | (j-i) \} \xrightarrow{\sim}
\{ [w, X^i 0^{k-i}] \,:\, i > 0, \dim(X) = 1 \}.
\end{equation*}

To relate the $W \times \ZZ_{km}$-structure 
of the remaining locus elements
$\{ (\omega^i, \omega^j) \,:\, k \nmid (j-i) \} \subset V^{\Theta}$ to 
$\Park^{NC}_W(k)$, we consider two subcases depending on the parity of $k$.

\noindent
{\bf Subcase 1a: $k$ is odd.}
When $k$ is odd, the set $\{ (\omega^i, \omega^j) \,:\, k \nmid (j-i) \} \subset V^{\Theta}$ breaks
up into $\frac{k-1}{2}$ free $W \times \ZZ_{km}$-orbits.  
Using Equations~\ref{s-locus-action}-\ref{g-locus-action}, one can check that a
complete list of orbit representatives
is $(1, \omega), (1, \omega^2), \dots, (1, \omega^{\frac{k-1}{2}}) $.

On the other hand, we claim that the set of elements of $\Park^{NC}_W(k)$ of the form
$[w, V^i X^j 0^{k-i-j}]$ 
with $\dim(X) = 1$ and $i, j > 0$
breaks up into $\frac{k-1}{2}$ free $W \times \ZZ_{km}$-orbits.  
Since the one-dimensional flats 
 are precisely $H, cH, \dots, c^{m-1}H$, 
Equations~\ref{dihedral-g-action-five} and \ref{dihedral-g-action-six} imply that
the $k$-$W$-parking functions
\begin{equation}
\label{odd-orbit-representatives}
[1, V  H 0^{k-2}], 
[1, V H^2 0^{k-3} ], 
\dots,
[1, V H^{\frac{k-1}{2}} 0^{\frac{k-1}{2}}]
\end{equation}
generate
the $W \times \ZZ_{km}$-stable set 
$\{ [w, V^i X^j 0^{k-i-j}] \,:\, w \in W, \dim(X) = 1, i, j > 0 \}$.  
This set has $(2m)(m){k \choose 2} = (2km^2) \frac{k-1}{2}$ elements.
Since the group $W \times \ZZ_{km}$ has size $2km^2$, this forces
the orbits of each of the $k$-$W$-parking functions
listed in (\ref{odd-orbit-representatives}) to be free and distinct.

We conclude that the assignments 
\begin{equation*}
(1, \omega^i) \mapsto [1, V H^i 0^{k-i-1}], \hspace{.2in} 1 \leq i \leq \frac{k-1}{2}
\end{equation*}
generate a $W \times \ZZ_{km}$-equivariant bijection 
\begin{equation*}
\{ (\omega^i, \omega^j) \,:\, k \nmid (j-i) \} 
\xrightarrow{\sim}
\{ [w, V^i X^j 0^{k-i-j}] \,:\, w \in W, \dim(X) = 1, i, j > 0 \} 
\end{equation*}
Therefore, when $m$ and $k$ are odd, there exists a $W \times \ZZ_{km}$-equivariant
bijection $V^{\Theta} \xrightarrow{\sim} \Park^{NC}_W(k)$, completing the
proof of Proposition~\ref{intermediate-type-I} in this subcase.

\noindent
{\bf Subcase 1b: $k$ is even.}  When $k$ is even, 
Equations~\ref{s-locus-action}-\ref{g-locus-action} imply that
the set 
$\{ (\omega^i, \omega^j) \,:\, k \nmid (j-i) \} \subset V^{\Theta}$ breaks up into
$\frac{k}{2}$ orbits under the action of $W \times \ZZ_{km}$.  Of these orbits, $\frac{k}{2} - 1$
are free and are represented by the elements of the set
$\{ (1, \omega), (1, \omega^2), \dots, (1, \omega^{\frac{k}{2}-1}) \}$.  The remaining 
orbit is generated by $(1, \omega^{\frac{k}{2}})$ and has
stabilizer equal to the $2$-element subgroup of $W \times \ZZ_{km}$ generated by
$(s c^{\frac{m-1}{2}}, g^{\frac{km}{2}})$.  

On the other hand, we claim that the subset 
$\{ [w, V^iX^j0^{k-i-j}] \,:\, \text{$\dim(X) = 1$, $i, j > 0$} \} \subseteq
\Park^{NC}_W(k)$
breaks up into $\frac{k}{2}$ orbits, of which $\frac{k}{2} - 1$ are free
and one a representative with 
$W \times \ZZ_{km}$-stabilizer equal to the $2$-element subgroup generated 
by $(s c^{\frac{m-1}{2}}, g^{\frac{km}{2}})$.
Representatives for the 
$\frac{k}{2} - 1$ free orbits are
\begin{equation}
\label{subcase-one-b-reps}
[1, V H 0^{k-2}], 
[1, V H^2 0^{k-3}], 
\dots,
[1, V H^{\frac{k}{2} - 1} 0^{\frac{k}{2}}].
\end{equation}
It can be shown that the
orbit generated by $[1, V H^{\frac{k}{2}} 0^{\frac{k}{2} - 1}]$ 
has stabilizer equal to the $2$-element
subgroup of $W \times \ZZ_{km}$ generated by $(s c^{\frac{m-1}{2}}, g^{\frac{km}{2}})$.

As in Subcase 1a,
it follows that the assignments
\begin{align*}
(1, \omega^i) \mapsto [1, V H^i 0^{k-i-1}], \hspace{.2in} 1 \leq i \leq \frac{k}{2}
\end{align*}
extend to a $W \times \ZZ_{km}$-equivariant
bijection 
\begin{equation*}
\{ (\omega^i, \omega^j) \,:\, k \nmid (j-i) \}  \xrightarrow{\sim}
\{ [w, V^i X^j 0^{k-i-j}] \,:\, w \in W, \dim(X) = 1, i, j > 0 \} .
\end{equation*}
Thus, when $m$ is odd and $k$ is even, we get a 
$W \times \ZZ_{km}$-equivariant bijection
$V^{\Theta} \xrightarrow{\sim} \Park^{NC}_W(k)$, completing the proof of 
Proposition~\ref{intermediate-type-I} in this subcase.

\noindent
{\bf Case 2: $m$ is even.}  When $m$ is even, the hyperplanes in $\Cox(W)$ break up
into two $W$-orbits, each of size $\frac{m}{2}$.  If we denote the reflecting hyperplanes for
$s, t \in W$ by $H, H'$ (respectively), we can express these orbits as 
$\{H, cH, \dots, c^{\frac{m}{2} - 1} H \}$ and
$\{H', cH', \dots, c^{\frac{m}{2} - 1} H' \}$.

The subset $\{ (\omega^i, \omega^j) \,:\, 2k | (j-i) \} \subset V^{\Theta}$ is stable under the action of
$W \times \ZZ_{kh}$ and forms a single orbit.  Moreover, the $W \times \ZZ_{km}$-stabilizer
of $(1, 1)$ is the $4$-element subgroup of $W \times \ZZ_{km}$ generated by $(s,1)$ and
$(c^{\frac{m}{2}}, g^{\frac{km}{2}})$.  
 Equations~\ref{dihedral-g-action-three} and \ref{dihedral-g-action-four} imply that
the $W \times \ZZ_{km}$-stabilizer of $[1, H^k]$ is equal to the same $4$-element subgroup.
Therefore,
 the assignment
\begin{equation*}
(1,1) \mapsto [1, H^k]
\end{equation*}
induces a $W \times \ZZ_{km}$-equivariant bijection 
\begin{equation*}
\{ (\omega^i, \omega^j) \,:\, 2k | (j-i) \} \xrightarrow{\sim} \{[w, X^i 0^{k-i}] \,:\, i > 1, X \in W.H \}.
\end{equation*}
Using similar reasoning, we get a 
$W \times \ZZ_{km}$-equivariant bijection
\begin{equation*}
\{ (\omega^i, \omega^j) \,:\, 2k | (j-i+k) \} \xrightarrow{\sim}
\{[w, X^i 0^{k-i}] \,:\, i > 1, X \in W.H' \}.
\end{equation*}

Finally, the subset 
$\{ (\omega^i, \omega^j) \,:\, k \nmid (j-i) \} \subset V^{\Theta}$ is stable 
under the action of $W \times \ZZ_{km}$.  The stabilizer of any element
of this set is the $2$-element subgroup of $W \times \ZZ_{km}$ generated by
$(c^{\frac{m}{2}}, g^{\frac{km}{2}})$.  Thus, this subset consists of 
$k-1$ orbits, each of size $km^2$.  A complete list of representatives for these
orbits is $(1, \omega), (1, \omega^2), \dots, (1, \omega^{k-1})$.
We leave it to the reader to check that the assignments 
\begin{equation*}
(1, \omega^i) \mapsto [1, V H^i 0^{k-i}], \hspace{.2in} 1 \leq i \leq k - 1
\end{equation*}
generate a 
$W \times \ZZ_{km}$-equivariant bijection
\begin{equation*}
\{ (\omega^i, \omega^j) \,:\, k \nmid (j-i) \} 
\xrightarrow{\sim}
\{ [w, V^i X^j 0^{k-i-j}] \,:\, w \in W, \dim(X) = 1, i, j > 0 \}.
\end{equation*}
In conclusion, we have 
a $W \times \ZZ_{km}$-equivariant bijection $V^{\Theta} \rightarrow \Park^{NC}_W(k)$
when $m$ is even, which completes the proof of Proposition~\ref{intermediate-type-I}
in Case 2.
\end{proof}

\section{Type BC}
\label{Type BC}

In this section we let $W$ have type B$_n$ or C$_n$ for $n \geq 1$.  We prove the 
Intermediate Conjecture for $W$.

\subsection{Visualizing type BC}  
Let $\pm [n] := \{-n, -(n-1), \dots, -1, 1, \dots, n-1, n \}$.
We identify $W$ with the {\sf hyperoctohedral group}, i.e. the group of
permutations $w$ of $\pm [n]$ which satisfy
$w(-i) = -w(i)$ for all $i$.  The cycle decomposition of  permutation $w \in W$ 
consists of 
{\sf balanced cycles} whose underlying subsets of $\pm [n]$ satisfy $S = -S$ and
{\sf paired cycles} whose underlying subsets of $\pm [n]$ do not satisfy $S = -S$.  
If $C$ is a paired cycle of $w$, the cycle $-C$ formed by negating every element
of $C$ is also a paired cycle of $w$.
To economize
notation, we denote balanced cycles by square brackets and pairs of paired cycles by double 
parentheses.  Also, we will sometimes write $\bar{i}$ for $-i$.  For example, we have that
$((1,\bar{4},3)) [5, \bar{6}] = (1, -4, 3)(-1, 4, -3) (5, -6, -5, 6)$.

The set $S = \{s_1, s_2, \dots, s_n \}$ forms a set of simple reflections for $W$ where
$s_i = ((i, i+1))$ for $1 \leq i \leq n-1$ and $s_n = [n]$.  The set $T$ of all reflections consists
of $T = \{ ((i,j)) \,:\, i \neq \pm j \} \uplus \{ [i] \,:\, 1 \leq i \leq n \}$.
We fix the Coxeter element $c := [1, 2, \dots, n] \in W$.  We have that
$h = 2n$.

The reflection representation $V$ 
of $W$
is $\CC^n$.  For $1 \leq i \leq n-1$ the simple reflection $s_i$ swaps the $i^{th}$ and
$(i+1)^{st}$ standard basis vectors.  The simple reflection $s_n$ negates the $n^{th}$ standard basis
vector and fixes all other standard basis vectors.

The Coxeter arrangement of $W$ is
$\Cox(W) = \{ x_i \pm x_j = 0 \,:\, 1 \leq i < j \leq n \} \uplus \{ x_i = 0 \,:\, 1 \leq i \leq n \}$.
Given any flat $X \in \LLL$, we obtain a partition of $\pm [n]$ defined by $i \sim j$ 
if and only if the coordinate equality $x_i = x_j$ holds on $X$, where we adopt
the convention that 
$x_{-i} = - x_i$.  This gives a bijection between the intersection lattice $\LLL$ and the collection
of set partitions $\pi$ of $\pm [n]$ such that
\begin{itemize}
\item $\pi$ is {\sf centrally symmetric}: $-B$ is a block of $\pi$ whenever $B$ is, and
\item $\pi$ has at most one block $Z$ which satisfies $Z = -Z$.  Such a block is called
 the {\sf zero block} of $\pi$, if it exists.
\end{itemize}
The term `zero block' comes from the fact that the coordinate equality $x_i = 0$
holds on a flat $X$ if and only if $i$ belongs to the zero block of the set partition
 corresponding to $X$.

A flat in $\LLL$ is noncrossing (with respect to $c$) if any only
if its representation on $\DD^2$ is noncrossing when one labels the boundary of 
$\DD^2$ clockwise with $1, 2, \dots, n, \bar{1}, \bar{2}, \dots, \bar{n}$.  
Observe that any set partition of $\pm [n]$ which is centrally symmetric and noncrossing 
on $\DD^2$ can have at most one zero block.

After identifying $\pm [n]$ with $[2n]$ via 
$i \mapsto \begin{cases} i & \text{for $i > 0$} \\ n-i & \text{for $i < 0$} \end{cases} $,
we can apply the 
map $\nabla$ of Lemma~\ref{classical-noncrossing}
to a  $k$-multichain of centrally symmetric noncrossing partitions of
$\pm [n]$
to get a $k$-divisible noncrossing partition of $\pm [kn]$.  
Armstrong proved that
the partition so obtained is also centrally symmetric, and that $\nabla$ restricts to 
a bijection (also denoted $\nabla$) from length $k$ multichains of centrally 
symmetric noncrossing partitions of $\pm [n]$ to $k$-divisible 
centrally symmetric noncrossing partitions of $\pm [kn]$ \cite[Theorem 4.5.6]{Armstrong}.  
We explain how
$\nabla$ can be enriched to give a map (again denoted $\nabla$) defined
on $\Park^{NC}_W(k)$.

Identifying sequences of noncrossing flats with sequences of noncrossing partitions,
we can view a $k$-$W$-parking function $[w, X_1 \leq \dots \leq X_k]$ as 
a  $k$-multichain of centrally symmetric noncrossing partitions of $\pm [n]$ 
corresponding to 
the $X_i$ with the block $B$ of the partition 
$X_1$ labeled by the subset $w(B)$ of $\pm[n]$.
As in type A, one can apply the map $\nabla$ 
to the sequence $X_1 \leq \dots \leq X_k$
to get a $k$-divisible centrally symmetric
noncrossing partition 
$\pi$
of $[kn]$.  The underlying $k$-divisible partition of $\pi$ has a zero block if and only
if $X_1$ does \cite{Armstrong}.  Moreover, as in type A, the labeling of the blocks of
the partition $X_1$ induces a labeling $f$ of the blocks of $\pi$.  
More precisely, the restriction of $\pi$ to $\pm \{ ik + 1 \,:\, 1 \leq i \leq n \}$ is order
isomorphic to the partition  $X_1$ of $\pm [n]$.  Given any block
$B \in \pi$, we set $f(B) = w(B')$, where $B'$ is the block of 
 $X_1$ corresponding to $B$.
We define the map
$\nabla$ by
$\nabla: [w, X_1 \leq \dots \leq X_k] \mapsto (\pi, f)$.

The map $\nabla$ bijects $\Park^{NC}_W(k)$ with pairs $(\pi, f)$, where
$\pi$ is a centrally symmetric $k$-divisible noncrossing partition of $\pm [kn]$ and $f$
labels each block $B$ of $\pi$ with a subset $f(B)$ of $\pm [n]$ such that 
\begin{enumerate}
\item $|B| = k |f(B)|$ for every block $B \in \pi$,
\item $f(-B) = -f(B)$ for every block $B \in \pi$, and
\item $\pm [n] = \biguplus_{B \in \pi} f(B)$.
\end{enumerate}
Under the map $\nabla$, the action of
the cyclic group $\ZZ_{kh}$  on pairs $(\pi, f)$ is given by clockwise rotation on $\DD^2$ and  
 group $W$ acts  by label permutation.

 \begin{figure}
\includegraphics[scale=0.6]{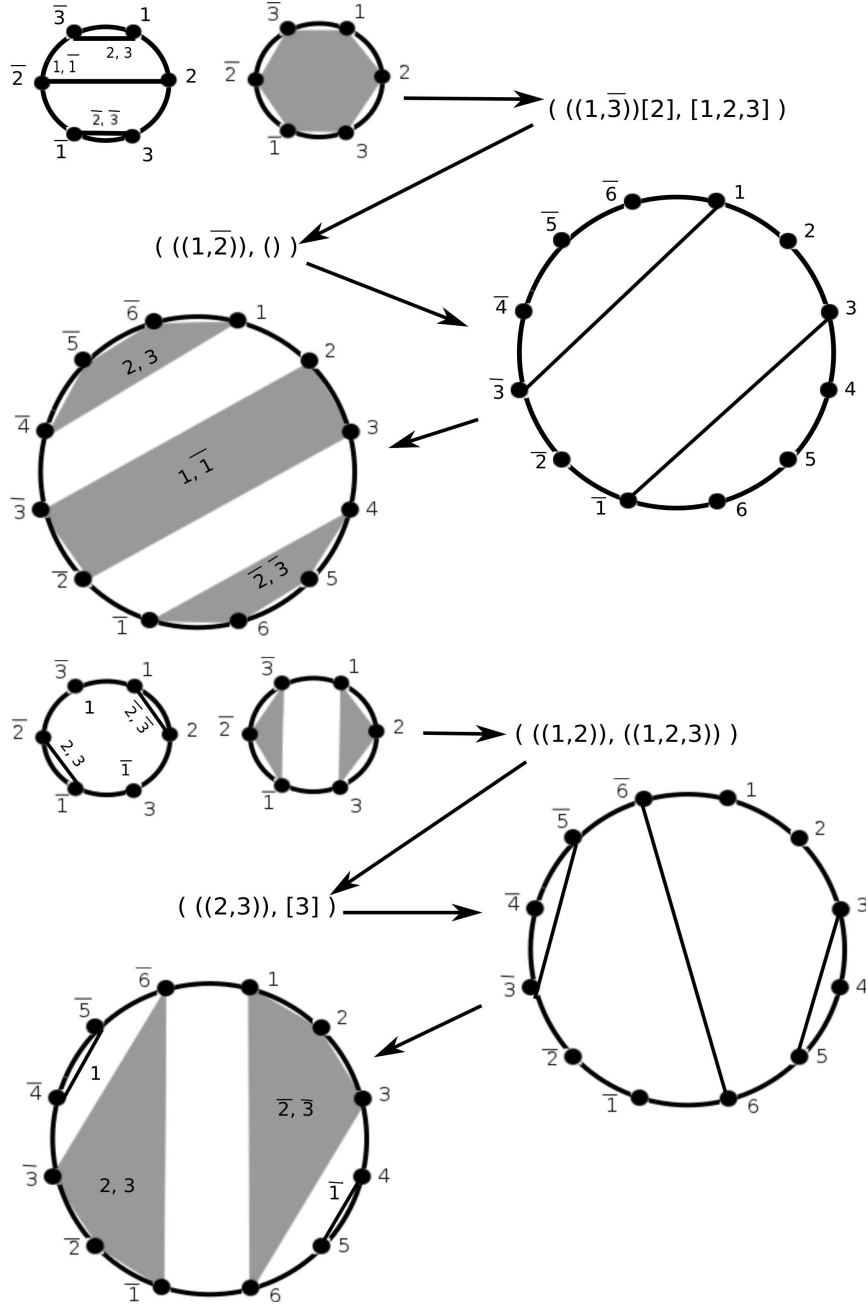}
\caption{Two $k$-$W$-noncrossing parking functions for $k = 2$ and $W$ of type B$_3$/C$_3$.}
\label{fig:Bfuss}
\end{figure}

Two examples of the map $\nabla$ on the level of parking functions 
 when $n = 3$ and $k = 2$ are shown in 
Figure~\ref{fig:Bfuss}.  
The upper example has a zero block while the lower example does not.

The upper example shows the $k$-$W$-noncrossing parking function
$[ ((1,3,\bar{2})), \{ 1, \bar{3} / 2, \bar{2} / \bar{1}, 3 \} \leq \{ 1, \bar{1}, 2, \bar{2}, 3, \bar{3} \}]$.
It is sent under $\nabla$ to the pair $(\pi, f)$, where 
$\pi = \{1, \bar{4}, \bar{5}, \bar{6} / 2, 3, \bar{2}, \bar{3} / 4, 5, 6, \bar{1} \}$ and
\begin{align*}
f: \{1, \bar{4}, \bar{5}, \bar{6} \} &\mapsto \{2, 3\}, \\
 \{2, 3, \bar{2}, \bar{3} \} &\mapsto \{1, \bar{1} \}, \\
\{4, 5, 6, \bar{1} \} &\mapsto \{ \bar{2}, \bar{3} \}.
\end{align*}

The lower example shows that $k$-$W$-noncrossing parking function
$[ ((1, \bar{3}))[2], \{ 1, 2 / 3 / \bar{1}, \bar{2} / \bar{3} \} \leq \{ 1, 2, 3 /  \bar{1}, \bar{2}, \bar{3} \}]$.
It is mapped under $\nabla$ to the pair $(\pi, f)$, where 
$\pi = \{ 1, 2, 3, 6 / 4, 5 /  \bar{1}, \bar{2}, \bar{3}, \bar{6} / \bar{4}, \bar{5} \}$ and
\begin{align*}
f: \{1, 2, 3, 6 \} &\mapsto \{ \bar{2}, \bar{3} \}, \\
 \{4, 5 \} &\mapsto \{ \bar{1} \}, \\
\{ \bar{1}, \bar{2}, \bar{3}, \bar{6} \} &\mapsto \{2, 3 \}, \\
 \{ \bar{4}, \bar{5} \} &\mapsto \{ 1 \}.
\end{align*}

\subsection{The Intermediate Conjecture in type BC}
We identify
$\CC[V]$ with $\CC[x_1, \dots, x_n]$ and choose 
$\theta_1 = x_1^{2nk+1}, \dots, \theta_n = x_n^{2nk+1}$ as our hsop of 
degree $2nk+1$ carrying $V^*$.  Then the map $x_i \mapsto \theta_i$ is $W$-equivariant, and
the zero locus cut out by $(\Theta - \xx)$ is
\begin{equation}
V^{\Theta} = \{ (v_1, \dots, v_n) \in \CC^n \,:\, \text{$v_i = 0$ or $v_i^{2nk} = 1$ for all $i$} \}.
\end{equation}
The $(2nk+1)^n = (kh+1)^n$ points in $V^{\Theta}$ are each cut out with multiplicity one.

We describe a map $\Phi: \Park^{NC}_W(k) \rightarrow V^{\Theta}$.  We represent an 
element of $\Park^{NC}_W(k)$ as a pair $(\pi, f)$ where $\pi$ is a $k$-divisible centrally
symmetric noncrossing partition of $\pm [kn]$ and $f$ is a labeling of the blocks of $\pi$ 
with subsets of $\pm [n]$ as 
above.  

Reiner \cite{Reiner} exhibited a bijection between centrally symmetric noncrossing partitions of
$\pm [n]$ and periodic parenthesizations of the doubly infinite sequence
\begin{equation*}
\dots 1, 2, \dots, n, \bar{1}, \bar{2}, \dots, \bar{n}, 1, 2, \dots, n, \bar{1}, \bar{2}, \dots, \bar{n}, \dots.
\end{equation*}
Given a periodic parenthesization, one constructs the nonzero blocks of the corresponding 
noncrossing partition by iteratively removing the innermost paired sets of parentheses.  
After all the parentheses are removed, the remaining symbols in the sequence form 
the zero block.
For example, when $n = 4$ the partition $\{ 1 / 2, \bar{4} / 3, \bar{3} / 4, \bar{2} / \bar{1} \}$ of
$\pm [4]$ corresponds to the parentization 
\begin{equation*}
\dots (1) 2) 3 (4 (\bar{1}) \bar{2}) \bar{3} (\bar{4} (1) 2) 3 (4 (\bar{1}) \bar{2}) \bar{3} (\bar{4} \dots.
\end{equation*}

  If $B$ is a nonzero block of a noncrossing centrally symmetric partition of 
$\pm[n]$, we say that $i \in \pm[n]$ {\sf opens} $B$ if $i$ occurs just to the right of the 
left parenthesis $``("$ corresponding to $B$.  In the above example, the 
block $\{1\}$ is opened by $1$, the block $\{2, \bar{4} \}$ is opened by $\bar{4}$, the block
$\{4, \bar{2} \}$ is opened by $4$, and the block $\bar{1}$ is opened by $1$.

We define $\phi(\pi,f) = (v_1, \dots, v_n)$ by 
\begin{equation*}
v_i = \begin{cases}
0 & \text{if $Z$ is the zero block of $\pi$ and $i \in f(Z)$} \\
\omega^j & \text{if $i \in f(B)$ for a nonzero block $B$ of $\pi$ opened by a positive index
$+j$} \\
-\omega^j & \text{if $i \in f(B)$ for a nonzero block $B$ of $\pi$ opened by a negative
index $-j$}
\end{cases}
\end{equation*}
For example, the parking function on the top of 
Figure~\ref{fig:Bfuss} is mapped under $\Phi$ to
$(0,\omega^{10}, \omega^{10})$ while the parking function on the bottom is mapped to
$(\omega^{10}, \omega^7, \omega^7)$.

\begin{proposition}
The Intermediate Conjecture holds when $W$ is of type $B_n$ or 
$C_n$.
\end{proposition}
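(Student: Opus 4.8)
The geometric assertions of the Intermediate Conjecture are already checked above: for the hsop $\theta_i=x_i^{2nk+1}$ the variety $V^{\Theta}$ cut out by $(\Theta-\xx)$ consists of exactly $(2nk+1)^n=(kh+1)^n$ points, each of multiplicity one. Hence $\CC[V]/(\Theta-\xx)$ is the ring of functions on the reduced finite $W\times\ZZ_{kh}$-set $V^{\Theta}$, so it is a $W\times\ZZ_{kh}$-permutation module, isomorphic (up to the self-duality of permutation modules) to the permutation module on $V^{\Theta}$, which by Proposition~\ref{deformation} is $\Park^{alg}_W(k)$. Thus it remains only to prove that the map $\Phi$ described above — equivalently the composite $\phi\circ\nabla$, where $\nabla$ is the bijection of the preceding subsection from $\Park^{NC}_W(k)$ onto labelled $k$-divisible centrally symmetric noncrossing partitions $(\pi,f)$ — is a $W\times\ZZ_{kh}$-equivariant bijection onto $V^{\Theta}$.

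\emph{Equivariance.} This is a direct computation. Extend a point $v=(v_1,\dots,v_n)\in V^{\Theta}$ to a $\pm[n]$-indexed vector by $v_{-i}:=-v_i$; then $W$ acts on $V^{\Theta}$ by $(w.v)_i=v_{w^{-1}(i)}$ and on labelled partitions by $f\mapsto w\circ f$. By the definition of $\phi$, the coordinate $\phi(\pi,f)_i$ is the root of unity $\zeta_B$ attached to the block $B$ with $i\in f(B)$ (with $\zeta_Z=0$ for the zero block $Z$, and $\zeta_B$ determined by the Reiner opener of $B$), so $\phi(\pi,w\circ f)_i=\zeta_B$ with $B$ the block containing $w^{-1}(i)$ under $f$, which is exactly $(w.\phi(\pi,f))_i$. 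For the cyclic factor, $g$ acts on $(\pi,f)$ by clockwise rotation of $\DD^2$, i.e.\ by the shift $j\mapsto j+1$ on $\pm[kn]$, which advances the opener of every nonzero block by one position; chasing the effect on $\zeta_B$ — including the wrap-around steps carrying an opener from $kn$ to $\bar1$ or from $\overline{kn}$ to $1$, and noting that the zero block maps to the zero block — one finds $\zeta_B\mapsto\omega\,\zeta_B$ in every case, so $\phi(g.(\pi,f))=\omega\cdot\phi(\pi,f)=g.\phi(\pi,f)$.

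\emph{Bijectivity.} I will construct a two-sided inverse $\Psi\colon V^{\Theta}\to\Park^{NC}_W(k)$. Given $v\in V^{\Theta}$, extend it to $\pm[n]$ as above; its fibres partition $\pm[n]$. Take the $0$-fibre of $v$ to be the label of the zero block. For each nonzero value $\zeta$ of $v$, write $\zeta=\zeta_j$ for the unique $j\in\pm[kn]$, where $\zeta_j:=\omega^{j}$ for $j>0$ and $\zeta_j:=-\omega^{|j|}$ for $j<0$, and declare a nonzero block with Reiner opener $j$, size $k\,|v^{-1}(\zeta)|$, and label $v^{-1}(\zeta)$. Reconstructing Reiner's periodic parenthesization from this opener-and-size data, by inverting the innermost-removal procedure, yields a centrally symmetric noncrossing partition $\pi$ of $\pm[kn]$; every block size is a multiple of $k$, so $\pi$ is $k$-divisible and hence, by Armstrong's theorem \cite{Armstrong}, lies in the image of $\nabla$, while $(\pi,f)$ satisfies the three conditions on labelled partitions. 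Setting $\Psi(v):=\nabla^{-1}(\pi,f)$, one reads off directly from the definitions of $\phi$ and of the reconstruction that $\Phi\circ\Psi=\mathrm{id}_{V^{\Theta}}$ and $\Psi\circ\Phi=\mathrm{id}_{\Park^{NC}_W(k)}$.

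\emph{The main obstacle.} The one point requiring real care is the well-definedness of $\Psi$: one must show that the opener-and-size data extracted from an \emph{arbitrary} $v\in V^{\Theta}$ is in fact realized by some (necessarily unique, since the reconstruction is deterministic) centrally symmetric noncrossing partition of $\pm[kn]$. This is a combinatorial statement about Reiner's parenthesization — that the stack-based inversion of innermost-removal never stalls and closes all blocks — whose proof uses precisely the constraints forced by $v\in V^{\Theta}$: the value multiset of $v$ is stable under negation, the opener positions are exactly the positions whose index has its root of unity among the values of $v$, and the block sizes sum to the correct total. The uniqueness half is immediate and already yields injectivity of $\Phi$; establishing realizability — equivalently, given injectivity, the count $|\Park^{NC}_W(k)|=(kh+1)^n$ — is the crux, and is the type-BC analogue of the combinatorial lemma used in \cite{ARR} for $k=1$. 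All remaining verifications are routine sign- and root-of-unity bookkeeping.
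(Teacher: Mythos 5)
Your proposal follows essentially the same route as the paper's proof: the same hsop $\theta_i = x_i^{2nk+1}$, the same opener-based map $\phi$ on labelled centrally symmetric $k$-divisible noncrossing partitions (via Reiner's parenthesization), and the same inverse $\psi$ built by reconstructing the periodic parenthesization from the opener-and-size data of a point of $V^{\Theta}$. The realizability and uniqueness of that reconstruction, which you flag as the crux and only sketch, is handled at the same level of detail in the paper, which asserts that the right parentheses can be inserted ``in the unique way'' and leaves the verification that $\psi = \phi^{-1}$ and the $W \times \ZZ_{kh}$-equivariance to the reader.
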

\begin{proof}
It suffices to show that $\phi$ is a $W \times \ZZ_{kh}$-equivariant bijection from
 $\Park^{NC}_W(k)$ to $V^{\Theta}$.  To show that $\phi$ is a bijection, we construct its inverse.
 
 Define a map $\psi: V^{\Theta} \rightarrow \Park^{NC}_W(k)$ as follows.  We start with a 
 point $v = (v_1, \dots, v_n) \in V^{\Theta}$, so that for all $i$ either $v_i = 0$ or $v_i^{2nk} = 1$.
 We put left parentheses $``("$ just before each term $j$ and
 $\bar{j}$ in the doubly infinite sequence 
 \begin{equation*}
 \dots 1, 2, \dots, kn, \bar{1}, \bar{2}, \dots, \overline{kn}, 1, 2, \dots, kn, \dots 
 \end{equation*}
 such that $v_i = \pm \omega^j$ for some $1 \leq i \leq n$.  We insert right parentheses $``)"$ in 
 this sequence in the unique way such that for each left parenthesis which immediately
 precedes an index $j$, the number of symbols paired with this left parenthesis in
 the iterative pairing equals $km$, where $m$ is the number of $v_i$ such that
$v_i = \pm \omega^j$.
 This infinite cyclic parenthization determines a noncrossing 
 centrally symmetric partition $\pi$ of $\pm [kn]$.  By construction, the partition $\pi$ is 
 $k$-divisible.  We define a labelling $f$ of the blocks of $\pi$ by letting 
\begin{equation*}
f(B) = \{ i  \,:\, v_i = \omega^j \} \uplus \{ -i \,:\, v_i = - \omega^j \},
\end{equation*}
 where $B$ is a nonzero block of $\pi$ opened by $j$, and
 $f(Z) = \{ \pm i \,:\, v_i = 0 \}$, where $Z$ is the zero block of $\pi$ (if it exists).  By construction, 
 we have that $[n] = \biguplus_{B \in \pi} f(B)$, $|B| = k |f(B)|$, and $f(-B) = -f(B)$.  Therefore, 
 the pair $\psi(v) := (\pi, f)$ is the representation
 on $\DD^2$ of a $k$-$W$-noncrossing parking function.
 
 As an example of the map $\psi$, suppose that $n = 4$ and $k = 2$, so that 
 $\omega$ is a primitive $2*2*4^{th} = 16^{th}$ root of unity.
 Let $v = (v_1, v_2, v_3, v_4) = (\omega^4, 0, \omega^{12}, \omega^{5}) \in V^{\Theta}$.
 To construct $\psi(v) = (\pi, f)$, we first construct the parenthesization 
 of
 \begin{equation*}
 \dots 12345678 \bar{1} \bar{2} \bar{3} \bar{4} \bar{5} \bar{6} \bar{7} \bar{8} 
 12345678 \bar{1} \bar{2} \bar{3} \bar{4} \bar{5} \bar{6} \bar{7} \bar{8} \dots
 \end{equation*}
 corresponding
 to $\pi$.  Since $\omega^4$, $\omega^{12} = -\omega^4$, 
 and $\omega^5$ appear as coordinates of $v$, 
 we put a left parenthesis just before every $4, \bar{4}, 5$, and $\bar{5}$:
 \begin{equation*}
  \dots 123(4(5678 \bar{1} \bar{2} \bar{3} (\bar{4} (\bar{5} \bar{6} \bar{7} \bar{8} 
 123(4(5678 \bar{1} \bar{2} \bar{3} (\bar{4} (\bar{5} \bar{6} \bar{7} \bar{8} \dots.
 \end{equation*}
 Since the coordiates of $v$ have $2$ occurrences of $\pm \omega^4$ and 
 $1$ occurence of $\pm \omega^5$, we insert right parentheses in the unique way
 that each left parenthesis preceding a $4$ or $\bar{4}$ opens a block of 
 size $2k = 4$ and each left parenthesis preceding a $5$ or $\bar{5}$ opens a block
 of size $1k = 2$:
  \begin{equation*}
  \dots 1)23(4(56)78 \bar{1}) \bar{2} \bar{3} (\bar{4} (\bar{5} \bar{6}) \bar{7} \bar{8} 
 1)23(4(56)78 \bar{1}) \bar{2} \bar{3} (\bar{4} (\bar{5} \bar{6}) \bar{7} \bar{8} \dots.
 \end{equation*}
 We can use this parenthesization to read off the $2$-divisible centrally
 symmetric noncrossing partition $\pi$ of $\pm[8]$.  We have that
 $\pi = \{1, \bar{4}, \bar{7}, \bar{8} / 2, 3, \bar{2}, \bar{3} / 4, 7, 8, \bar{1} / 5, 6 / \bar{5}, \bar{6} \}$.
 Next, we determine the labelling $f$ of the blocks of $\pi$.  Since $v_1 = \omega^4$ and
 $v_3 = \omega^{4+8}$, we conclude the the block opened by $4$ is labelled with 
 $\{1, \bar{3} \}$.  That is, we have $f( \{4, 7, 8, \bar{1} \}) = \{1, \bar{3} \}$.  Similarly, we have 
 $f( \{ \bar{4}, \bar{7}, \bar{8}, 1 \}) = \{\bar{1}, 3 \}$, $f(\{5, 6\}) = \{\bar{4}\}$, and
 $f(\{ \bar{5}, \bar{6} \}) = \{4\}$.  Since $v_2 = 0$, we have that 
 $f(\{2,3,\bar{2},\bar{3}\}) = \{2, \bar{2} \}$.
 
We leave it to the reader to check that $\psi = \phi^{-1}$ and that the inverse bijections
$\phi$ and $\psi$ are $W \times \ZZ_{kh}$-equivariant.
\end{proof}

\section{Type D}
\label{Type D}

In this section we let $W$ be of type D$_n$ for $n \geq 3$.  We  recall the visualization
of the $k = 1$ case of $\Park^{NC}_W$ presented in \cite{ARR} and  explain how
one can use the Krattenthaler-M\"uller-Kim annular interpretation of Fuss noncrossing partitions
in type D to visualize elements of $\Park^{NC}_W(k)$.  Finally, we prove the Intermediate
Conjecture in type D.

\subsection{Visualizing type D noncrossing partitions and parking functions} We identify $W$ with the index two subgroup of the hyperoctohedral group
$W(B_n)$ consisting of permutations $w$ of the set $\pm [n]$ such that
$w(-i) = -w(i)$ for all $i$ and an even number of terms in the sequence
$w(1), w(2), \dots, w(n)$ are negative.  Since $W$ is a subgroup of $W(B_n)$, each
permutation in $W$ inherits a decomposition into balanced and paired cycles.

The set $S = \{ s_1, s_2, \dots, s_n \}$ forms a set of simple reflections for $W$, where
$s_i = ((i, i+1))$ for $1 \leq i \leq n-1$ and $s_n = ((n-1, -n))$.    The set $T$ of 
all reflections in $W$ is given by 
$T = \{ ((i, j)) \,:\, i, j \in \pm [n], i \neq \pm j \}$.  We make the choice of Coxeter element
$c = [1, 2, \dots, n-1] [n]$.  The Coxeter number is $h = 2n - 2$.

We use a topological model for type D noncrossing partitions on $\DD^2$ 
due to Athanasiadis and Reiner \cite{AthanasiadisR}.  Label the outer boundary of 
$\DD^2$ clockwise with $1, 2, \dots, n-1, \overline{1}, \overline{2}, \dots, \overline{n-1}$ and 
label the center of $\DD^2$ with both $n$ and $\overline{n}$.  Then, the relative interiors
of the convex hulls of the cycles of a noncrossing group element $w \in W$ do not 
intersect.  This sets up a bijection between noncrossing group elements $w \in W$ and 
centrally symmetric noncrossing set partitions drawn on $\DD^2$.  A {\sf zero block} of a centrally
symmetric noncrossing partition is a centrally symmetric block which contains
at least two boundary vertices.  
If it exists, a zero block is unique and necessarily contains both central 
labels $n$ and $\overline{n}$.  In order to recover the noncrossing group element
$w \in W$ associated to a centrally symmetric noncrossing partition $\pi$ drawn on 
$\DD^2$, we let $w$ be the permutation in $W$ with
\begin{itemize}
\item paired cycles given by the nonzero blocks of $\pi$, taken in the cyclic 
order
\begin{equation*}
\dots, 1, 2, 3, \dots, n, \overline{1}, \overline{2}, \dots, \overline{n}, 1, 2, \dots, n, \dots,
\end{equation*}
and
\item balanced cycles given by $[n]$ together with the elements in $Z - \{ n, \overline{n} \}$,
taken in the same cyclic order, 
if $\pi$ has a zero block $Z$.
\end{itemize}
Athanasiadis and Reiner prove that these maps are inverse bijections.

In \cite{ARR}, this structure is enriched to give a topological model for 
$W$-noncrossing parking functions.  If $[w, X] \in \Park^{NC}_W$, we draw 
the noncrossing partition $X$ on $\DD^2$, which
yields a figure $\pi$.  Given any block $B$ of $\pi$, we have that
$w(B) \subseteq \pm [n]$.  We define $f$ to be the labeling of the blocks of $\pi$ which
sends $B$ to $w(B)$.  The map $[w, X] \mapsto (\pi, f)$ sets up a bijection between
$\Park^{NC}_W$ and pairs $(\pi, f)$ such that $\pi$ is a type D noncrossing partition of
$\pm [n]$ and the following four conditions hold.
\begin{enumerate}
\item For any block $B \in \pi$ we have $|B| = |f(B)|$.
\item For any block $B \in \pi$ we have $f(-B) = -f(B)$.
\item We have $\pm [n] = \biguplus_{B \in \pi} f(B)$.
\item There exists a bijection $\phi: \pm[n] \rightarrow \pm[n]$ such that $\phi(B) = f(B)$ for 
all $B \in \pi$, $\phi(-i) = - \phi(i)$ for all $i \in \pm[n]$, and $\phi$ sends an even number of 
positive indices to negative indices.
\end{enumerate}
The group $W$ acts
on set of pairs $(\pi, f)$ by permuting the labels and the group $\ZZ_h$ acts by simultaneous
$(2n - 2)$-fold rotation on the outer boundary of $\DD^2$ and swapping the center labels
$n$ and $-n$.

\subsection{Visualizing type D $k$-noncrossing partitions and $k$-parking functions}
Visualizing the Fuss analogs of type D noncrossing partitions differs significantly from 
the topological models presented so far in that our partitions are drawn on the {\bf annulus}
$\AAA^2$ rather than on the disc $\DD^2$.  This construction was outlined by 
Krattenthaler and M\"uller \cite{KrattenthalerMuller} and a gap in the original 
Krattenthaler-M\"uller construction was filled by Kim \cite{Kim}.

Let $k \geq 1$ be a Fuss parameter.  Given any element $w \in W \subset \symm_{\pm [n]}$ and
any $1 \leq m \leq k$, we associate an element
$\tau_{k,m}(w) \in \symm_{\pm [kn] }$ by letting the cycles of $\tau_{k,m}(w)$ be obtained
from the cycles of $w$ by replacing $i$ with $m + (i-1)k$ 
and $\overline{i}$ with $\overline{m + (i-1)k}$ for all $i > 0$. 

The functions $\tau_{k,m}$ can be used to draw elements of $NC^k(W)$ on the annulus.
Given $(w_1 \leq \dots \leq w_k) \in NC^k(W)$, we define
$\tau(w_1 \leq \dots \leq w_k)$ to be the permutation
in $\symm_{\pm [kn]}$ given by
\begin{align*}
\tau(w_1 \leq \dots \leq w_k) = [1, &2, \dots, k(n-1)] [k(n-1) + 1, k(n-1) + 2, \dots, kn]  \\ &\circ
\tau_{k,1}(w_1^{-1} w_2)^{-1} \cdots \tau_{k,k-1}(w_{k-1}^{-1} w_k)^{-1}
\tau_{k,k}(w_k^{-1} c)^{-1}.
\end{align*}
The cycles of the permutation $\tau(w_1 \leq \dots \leq w_k)$ form can be viewed as a noncrossing
partition on the annulus $\AAA^2$ as follows.

Label the outer boundary of $\AAA^2$ clockwise with 
\begin{equation*}
1, 2, \dots, k(n-1), \overline{1}, \overline{2}, \dots, \overline{k(n-1)}.
\end{equation*}
Label the inner boundary of $\AAA^2$ {\em counterclockwise} with
\begin{equation*}
k(n-1) + 1, k(n-1) + 2, \dots, kn, \overline{k(n-1) + 1}, \overline{k(n-1) + 2}, \dots, \overline{kn}.
\end{equation*}
A partition $\pi$ of $\pm [kn]$ is said to be {\sf $k$-D-noncrossing} if the following five conditions
are satisfied.
\begin{enumerate}
\item If $B \in \pi$, then $-B \in \pi$ (i.e., $\pi$ is centrally symmetric).
\item The vertices on $\AAA^2$ corresponding to the blocks of $\pi$ can be connected
by simple closed curves contained in $\AAA^2$ such that the curves corresponding to distinct
blocks do not cross.
\item If $\pi$ contains a block $Z$ satisfying $Z = -Z$ (i.e., a centrally symmetric block),
then $Z$ must contain all of the vertices on the inner boundary of $\AAA^2$ 
together with at least $2k$
additional
vertices on the outer boundary of $\AAA^2$.
\item When the vertices of any block of $\pi$ are read in the cyclic order coming from the 
embedding in $\AAA^2$, the absolute values of consecutive vertices represent 
consecutive residue classes modulo $k$.
\item If every block of $\pi$ consists entirely of vertices on the outer boundary of $\AAA^2$
or entirely of vertices on the inner boundary of $\AAA^2$, then the inner blocks of $\pi$
are determined by the outer blocks of $\pi$ in the following way.  A block $B \in \pi$ on the outer
boundary is called {\sf visible} if it can be connected to the inner boundary by a curve which
does not cross any other blocks on the outer boundary.  Let $B$ be a visible block and let
$b \in B$ be the last vertex read in clockwise order.  Then the two inner blocks of $\pi$ are 
the unique pair of two consecutive $k$-element subsets of the inner boundary such that
one of the subsets ends in $a$ (in counterclockwise order) where $a > 0$ and
$a \equiv b$ (mod $k$).
\end{enumerate}
Condition 4 implies and is stronger than the condition that $\pi$ is $k$-divisible.  
Condition 5 was originally missed by Krattenthaler and M\"uller;  its necessity was observed
by Kim.

Given any partition $\pi$ of $\pm [kn]$ and any block $B$ of $\pi$, call $B$ {\sf inner} if 
$B$ only contains vertices on the inner boundary of $\AAA^2$, {\sf outer} if $B$ only contains
vertices on the outer boundary of $\AAA^2$, {\sf zero} if $B$ satisfies $B = -B$, and
{\sf annular} if none of the preceding conditions hold.  
If $\pi$ is $k$-D-noncrossing, we observe the following trichotomy.
\begin{itemize}
\item The partition $\pi$ contains a zero block and any other blocks of $\pi$ are outer.
\item The partition $\pi$ contains inner blocks and all other blocks of $\pi$ are outer.
\item The partition $\pi$ contains annular blocks and any other blocks of $\pi$ are outer.
\end{itemize}

\begin{theorem} 
\label{tau-theorem}
(\cite{KrattenthalerMuller} \cite{Kim})
Suppose $W$ has type D$_n$ for $n > 2$.
There is a bijection from the set $NC^k(W)$ of $k$-$W$-noncrossing partitions 
to the set of $k$-$D$-noncrossing partitions of $\pm [kn]$ given by sending a tuple
$(w_1 \leq \dots \leq w_k)$ to the partition $\pi$ of $\pm [kn]$ whose blocks are the cycles
of $\tau(w_1 \leq \dots \leq w_k)$.  Moreover, if $\pi_1$ is the partition of $\pm [kn]$ corresponding
to $w_1$, then
\begin{itemize}
\item the partition $\pi$ has a zero block if and only if $\pi_1$ does, in which case the zero block of 
$\pi$ has size $k$ times the size of the zero block of $\pi_1$, 
\item the partition $\pi$ has inner blocks if and only if the singletons $\{ n \}$ and $\{ -n \}$ are
blocks of $\pi_1$, and
\item if $\pi_1$ has $c_i$ nonzero blocks of size $i$, then $\pi$ has $c_i$ nonzero blocks
of size $ki$.
\end{itemize}
\end{theorem}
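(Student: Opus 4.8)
The plan is to follow the route of Krattenthaler--M\"uller \cite{KrattenthalerMuller}, incorporating the correction of Kim \cite{Kim}, and to organize the argument around the additive-factorization picture already used in types A and B. It suffices to (i) check that $\tau(w_1 \leq \dots \leq w_k)$ is well defined and that its cycle partition $\pi$ is always $k$-$D$-noncrossing, (ii) exhibit an inverse defined on all $k$-$D$-noncrossing partitions of $\pm[kn]$, and (iii) verify the three bulleted structural assertions. For (i), first pass from the multichain to the $\ell_T$-additive factorization $(v_0, v_1, \dots, v_k) = \partial(w_1 \leq \dots \leq w_k)$ of the Coxeter element $c = [1,\dots,n-1][n]$; by definition $\tau(w_1 \leq \dots \leq w_k)$ is the product of the annular ``rotation'' element $[1,\dots,k(n-1)][k(n-1)+1,\dots,kn]$ with the spread factors $\tau_{k,m}(w_m^{-1}w_{m+1})^{-1}$ (and $\tau_{k,k}(w_k^{-1}c)^{-1}$). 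Central symmetry of $\pi$ is immediate, since each $w_i$, hence each factor $v_m$, hence each $\tau_{k,m}(v_m)$, commutes with the negation involution; and the $k$-divisibility together with the residue condition~4 is built into the spreading maps $\tau_{k,m}$, whose images consist of indices congruent to $\pm m$ modulo $k$, so that as one traverses a cycle of the product the absolute values cycle through the residues $1, 2, \dots, k$ in order.

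The heart of (i) is the planarity assertion on the annulus $\AAA^2$. Here one uses that the factorization is reduced, $\sum_m \ell_T(v_m) = \ell_T(c)$, and that each $v_m$, being $\leq_T c$, is itself noncrossing in the disc model of Athanasiadis--Reiner; the $m$-th factor $\tau_{k,m}(v_m^{-1})$ can then be drawn in the $m$-th of $k$ nested radial ``layers'' without crossings, exactly as the additive factors of a type A Coxeter element are drawn in nested annular bands in the proof of Lemma~\ref{classical-noncrossing}. The only place the disc intuition genuinely fails is the annular topology: a block may or may not wind around the central hole, and the two central labels $n, \overline{n}$ sit at the puncture. This is what forces the size condition~3 --- a zero block of $\pi_1$ already has at least two boundary vertices in the disc model, so after spreading it has at least $2k$ outer vertices in addition to the whole inner boundary --- and, in the degenerate case where no block winds, the delicate condition~5.

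For (ii), one reads the blocks of a $k$-$D$-noncrossing partition $\pi$ as the cycles of a permutation of $\pm[kn]$, strips off the rotation element, applies $\tau_{k,m}^{-1}$ layer by layer to recover $(v_0, v_1, \dots, v_k)$, and then applies $\int$ to obtain a multichain $(w_1 \leq \dots \leq w_k) \in NC^k(W)$. The subtlety, first noticed by Kim, is that when every block of $\pi$ lies entirely on one boundary circle the inner blocks carry no winding data, so there are two a priori consistent ways to match inner $k$-arcs to the outer diagram; condition~5 --- reading off the ``visible'' block, its last clockwise vertex $b$, and the inner block ending in $a > 0$ with $a \equiv b \pmod k$ --- is precisely the convention that pins this down, and one checks that with condition~5 imposed both composites $\tau \circ (\text{inverse})$ and $(\text{inverse}) \circ \tau$ are the identity. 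I expect \textbf{this step --- the well-posedness of the inverse in the degenerate boundary case, i.e.\ the verification that condition~5 is exactly the missing constraint --- to be the main obstacle}, since it is the one ingredient with no analog in the disc-based types A and B, and it is the gap that invalidated the original Krattenthaler--M\"uller argument.

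Finally, for (iii) one tracks $\pi_1$ through $\tau$. As in Lemma~\ref{classical-noncrossing}, the layer-$1$ spreading map $\tau_{k,1}$ identifies $\pi_1$, together with its central labels, with the restriction of $\pi$ to the vertices $\{1, k+1, \dots, k(n-1)-k+1\}$, their negatives, and the inner boundary; the three bullets are then read off from this dictionary. A zero block of $\pi_1$ --- a centrally symmetric block containing $n, \overline{n}$ and at least two outer vertices --- spreads to a centrally symmetric block of $\pi$ that swallows the entire inner boundary together with $k$ times as many outer vertices, giving both the ``$\pi$ has a zero block iff $\pi_1$ does'' claim and the size-scaling by $k$. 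The singletons $\{n\}, \{\overline{n}\}$ of $\pi_1$ correspond to the inner boundary splitting off into its own $k$-element arcs, i.e.\ $\pi$ has inner blocks; and a nonzero block of $\pi_1$ of size $i$ is spread, across the $k$ layers, to a block of $\pi$ of size $ki$, yielding the $c_i \mapsto c_i$ count. Each of these is a routine unwinding of the definition of $\tau_{k,m}$ once the bijection in (i)--(ii) is established.
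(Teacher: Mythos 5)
First, note that the paper does not prove Theorem~\ref{tau-theorem} at all: it is quoted from Krattenthaler--M\"uller \cite{KrattenthalerMuller} and Kim \cite{Kim}, so there is no internal argument to match your sketch against, and a genuine proof would have to reconstruct their work. Measured against that standard, your proposal is a plan rather than a proof, and it defers exactly the two points that constitute the theorem's content. The annular planarity of the cycle diagram of $\tau(w_1\leq\dots\leq w_k)$ is asserted by analogy with the nested-band picture behind Lemma~\ref{classical-noncrossing}, but no argument is given for why an $\ell_T$-additive factorization of the type D Coxeter element produces non-crossing curves on $\AAA^2$ (this is where the two central labels $n,\overline{n}$ and the possibility of winding make the disc intuition break down). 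And for the inverse, you explicitly flag the degenerate case (all blocks on one boundary circle, condition~5) as ``the main obstacle'' and then simply assert that ``one checks'' the two composites are the identity; since that is precisely the gap that invalidated the original Krattenthaler--M\"uller argument and required Kim's correction, leaving it unverified means the bijection itself is not established.

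There is also a concrete error in your route to the three bulleted claims. You propose to read them off from a dictionary identifying $\pi_1$ (with its central labels) with the restriction of $\pi$ to the layer-one vertices $\pm\{1,k+1,\dots\}$ together with the inner boundary. But the paper states explicitly, in the proof of Lemma~\ref{d-visualization}, that in type D the set partition induced by $\pi$ on $S=\{\pm i : i\equiv 1 \pmod k\}$ is \emph{not} order isomorphic to $\pi_1$; the failure of exactly this dictionary is why the type D visualization of parking functions is non-canonical there, in contrast to types A and BC. (Note also that $w_1$ itself never enters $\tau$ through a spreading map; only the factors $w_m^{-1}w_{m+1}$ and $w_k^{-1}c$ do, so there is no ``layer-one copy'' of $\pi_1$ sitting inside $\pi$.) The bulleted statements are weaker than the dictionary --- they concern only the presence of zero or inner blocks and the multiset of block sizes --- and they are true, but they must be obtained by tracking the cycle structure of $\tau$ (or by citing \cite{KrattenthalerMuller} and \cite{Kim}), not from a restriction isomorphism that is false in type D.
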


It can be shown from the definition of $\tau$ that the distinguished generator $g$ of $\ZZ_{kh}$
acts by simultaneous clockwise rotation on the outer annulus and counterclockwise rotation
on the inner annulus.

If $\pi$ is a $k$-D-noncrossing partition of 
$\pm [kn]$ and $B$ is an outer or annular block of $\pi$, let the {\sf opener} of 
$B$ be the first
element $b \in B$ in the clockwise order on the outer vertices of $\AAA^2$.  
If we restrict $\pi$ to the vertices on the outer boundary, we obtain a 
$k$-divisible centrally symmetric partition of $\pm [k(n-1)]$ and this definition agrees
with the one in Section~\ref{Type BC}.

To prove the Intermediate Conjecture in type D it will be useful to determine how uniquely a 
$k$-D-noncrossing partition $\pi$ is determined by its set of openers, the size of the blocks
which they open, and the size of its zero block.  Define an {\sf antipode} on the set of 
$k$-D-noncrossing partitions by letting $A(\pi)$ be the partition obtained from $\pi$ by 
swapping the signs of the vertices on the inner boundary of $\AAA^2$ (or, equivalently,
rotating the inner boundary by 180 degrees).  Figure~\ref{fig:antipode} shows an
example of the antipodal involution when $n = 4$ and $k = 3$.  We have that
$A(\pi) \neq \pi$ if and only if $\pi$ contains annular blocks.  It is clear that the openers of $\pi$,
the size of the blocks which they open, and the size of the zero block of $\pi$ are preserved
by $A$, so that this data can at most determine $\pi$ up to antipode.  The next lemma states that
this data exactly determines $\pi$ up to antipode.

\begin{figure}
\includegraphics[scale=0.4]{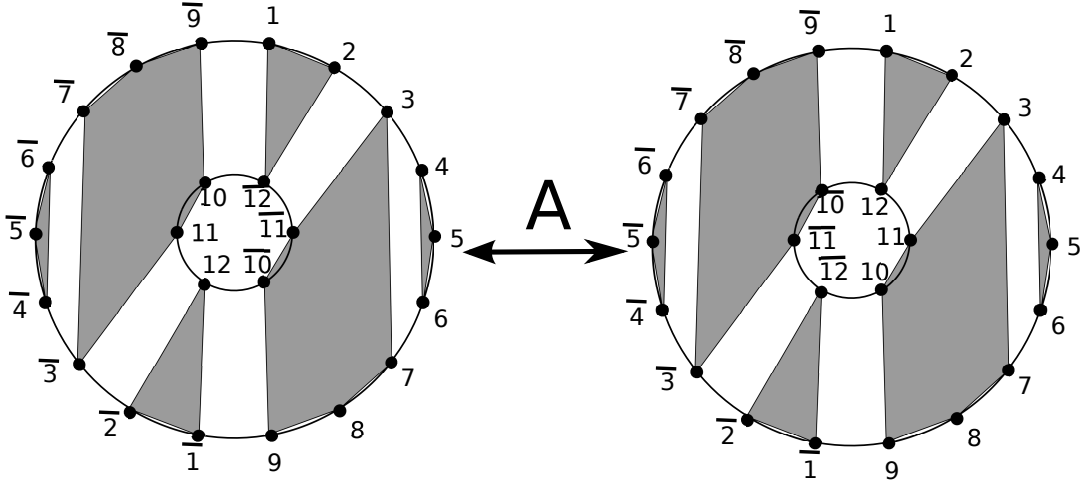}
\caption{The antipode when $n = 4$ and $k = 3$}
\label{fig:antipode}
\end{figure}

\begin{lemma}
\label{recovery}
Let $\pi$ be a $k$-D-noncrossing partition.  The partition $\pi$ is uniquely determined 
up to antipode by
\begin{itemize}
\item the size of the zero block of $\pi$ (and, in particular, its presence or absence), 
\item the presence or absence of inner blocks of $\pi$,
\item the subset of  $\pm [k(n-1)]$ of openers $i$ of the outer and annular
blocks of $\pi$, and
\item the number $b_i$ (necessarily divisible by $k$) of elements in the block of $\pi$ 
which is opened by $i$.
\end{itemize}

Conversely, any sequence $(i_1, \dots, i_s )$ in $[k(n-1)]$ together with any sequence of 
integers $(b_1, \dots, b_s)$ divisible by $k$ and
a number $z$ divisible by $2k$ which
satisfy $2b_1 + \cdots + 2b_s + z = 2kn$ uniquely determine up to antipode a 
$k$-D-noncrossing partition $\pi$ with 
openers $\pm i_1, \dots, \pm i_s$ opening blocks of size 
 $b_1, \dots, b_s$ which has 
 \begin{itemize}
 \item no zero block if $z = 0$,
 \item two inner blocks if $z = 2k$, or
 \item a zero block of size $z$ if $z > 2k$.
 \end{itemize}
\end{lemma}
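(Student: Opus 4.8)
The plan is to exploit the trichotomy stated just above the lemma (zero block / inner blocks / annular blocks) and, in each case, to recover $\pi$ from the listed data by combining Reiner's parenthesization bijection for centrally symmetric noncrossing partitions, as used in Section~\ref{Type BC}, with an analysis of the inner boundary. Throughout I would use that the listed data is indeed preserved by the antipode $A$ (as already observed before the lemma), so that only the annular case can carry genuine ambiguity.

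First I would dispose of the two cases in which $A$ acts trivially. If $\pi$ has a zero block $Z$, then necessarily $z>2k$, every other block is outer, and the restriction $\hat\pi$ of $\pi$ to the outer vertices $\pm[k(n-1)]$ is a $k$-divisible centrally symmetric noncrossing partition of $\pm[k(n-1)]$ whose zero block has size $z-2k\ge 2k$. Reading the outer boundary clockwise and placing a left parenthesis before each prescribed opener, the requirement that the block opened by $i$ contain $b_i$ vertices forces the closing parentheses uniquely, exactly as in Section~\ref{Type BC}; hence $\hat\pi$, and therefore $\pi$ (obtained by adjoining all $2k$ inner vertices to $Z$), is determined. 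If instead $\pi$ has two inner blocks, so $z=2k$, the same parenthesization recovers $\hat\pi$ (now with no zero block, since $\sum_i 2b_i = 2k(n-1)$), and Kim's condition (5) then pins down the two inner blocks uniquely from the visible outer block and its last clockwise vertex. In both cases $\pi$ is unique, and since $A$ only permutes the inner vertices inside $Z$ (respectively interchanges the two inner blocks), $A(\pi)=\pi$, so the conclusion holds trivially.

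The remaining, and main, case is that $\pi$ has annular blocks, so $z=0$ and there are no inner blocks. Here the outer boundary carries $2k(n-1)$ vertices while $\sum_i 2b_i = 2kn$, so exactly $2k$ of the vertices of $\pi$ lie on the inner boundary and belong to annular blocks, which must be among the \emph{visible} (outermost, non-nested) blocks of the outer structure. I would first run the clockwise parenthesization on the outer boundary to recover the laminar family of blocks that do not reach the inner boundary, and then argue that the distribution of the $2k$ inner vertices among the visible blocks — turning some of them into annular blocks — is forced up to the antipode by conditions (2) and (4): the consecutive-residue condition (4) dictates which residue classes mod $k$ the inner arc of each annular block must carry, and the noncrossing condition (2) together with central symmetry leaves exactly two global completions, interchanged by the $180^{\circ}$ rotation of the inner boundary, i.e. by $A$. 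This is the step I expect to be the main obstacle: it requires a careful case analysis of how several antipodal pairs of annular blocks can coexist noncrossingly on $\AAA^2$ while respecting the residue constraint, and it is where the Krattenthaler--M\"uller--Kim description (Theorem~\ref{tau-theorem} and conditions (1)--(5)) is used most heavily.

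Finally, for the converse I would reverse this procedure: given $(i_1,\dots,i_s)$, $(b_1,\dots,b_s)$ and $z$ with $2b_1+\cdots+2b_s+z=2kn$, $k\mid b_j$ and $2k\mid z$, place left parentheses before $\pm i_1,\dots,\pm i_s$ in the doubly infinite cyclic sequence, close them so that the block opened by $\pm i_j$ has size $b_j$ (routing the overflow onto the inner boundary when $z=0$ and into a zero block when $z>2k$), and in the case $z=2k$ define the two inner blocks by Kim's rule. One then checks that conditions (1)--(5) hold for the resulting partition and that its openers, block sizes, and zero-block size are as prescribed; the only non-routine verifications are condition (5) in the inner-block case and conditions (2) and (4) in the annular case, which follow from the same laminar-and-residue bookkeeping used in the forward direction.
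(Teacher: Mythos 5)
Your proposal follows essentially the same route as the paper: the trichotomy zero block / inner blocks / annular blocks, reduction to the restriction on the outer boundary and Reiner's type-B parenthesization in the first two cases, Condition 5 to pin down the inner blocks, and a residue-plus-noncrossing analysis showing the annular configuration is forced only up to the antipode, with the converse obtained by reversing the construction. Even the step you flag as the main obstacle is treated in the paper at a comparable level of detail (each annular block opened by $i_r$ has between $b_r-k$ and $b_r$ outer vertices, the inner successor of an outer vertex is determined by the consecutive-residue condition modulo $k$ up to an antipodal choice, and one reconstructs the blocks in increasing order of size), so your outline matches the published argument.
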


\begin{proof}
Let $\pi$ be a $k$-D-noncrossing partition of $\pm [kn]$.  We show that $\pi$ is determined
up to antipode by the given information.

Suppose first that $\pi$ has a zero block $Z$.  Then, $k | |Z|$ and 
$Z$ contains all of the $2k$ vertices on the inner boundary of $\AAA^2$.  
Let $\overline{\pi}$ be the $k$-$W(B_{n-1})$-noncrossing partition formed by restricting
$\pi$ to the outer boundary.  The openers of $\overline{\pi}$ are  determined from the openers
of $\pi$, as well as the size of the blocks they open.  Also, the size of the zero block of
$\overline{\pi}$ is $|Z| - 2k$.  By Reiner's bijection \cite{Reiner}, 
type B noncrossing partitions can be recovered from
their openers, the size of the blocks which they open, and the size of their zero blocks.  It follows
that $\overline{\pi}$ can be recovered from the data above and $\pi$ can be recovered
from $\overline{\pi}$.

Next suppose that $\pi$ has two inner blocks $I, -I \in \pi$.  Again consider the restriction
$\overline{\pi} = \pi - \{ I, -I \}$ of $\pi$ to the outer boundary.  By the same reasoning as before,
the restriction $\overline{\pi}$ can be recovered from the above data.  Condition 5 in the definition
of $k$-D-noncrossing partitions implies that the inner blocks of $\pi$ are determined by the 
outer blocks of $\pi$, so that $\pi$ is determined by $\overline{\pi}$.

Now assume that every block of $\pi$ is outer or annular.  We focus on the sequence 
$( i_1, \dots, i_s )$ of openers together with the sizes of the blocks
$(b_1, \dots, b_s)$ that they open.
The block $B_r \in \pi$ 
opened by $i_r$ has at least $b_r - k$ and at most $b_r$ vertices on the outer 
boundary of $\AAA^2$ and the absolute values of the
entries in $B_r$ must represent consecutive residue
classes modulo $k$ when read clockwise around the block.  
Moreover, if $i$ and $j$ are two consecutive outer boundary vertices in $B_r$, then 
the vertices between $i$ and $j$ must be a union of outer blocks of $\pi$.  Finally, if $i$ and 
$j$ are two consecutive vertices in $B_r$ with $i$ on the outer boundary and $j$ on the inner
boundary, we have that $j$ is uniquely determined from $i$ by the condition on residue 
classes plus a choice of elements in an antipodal equivalence class.
Working in increasing order of block sizes, 
it can be shown that the blocks $B_r$ of $\pi$ can be reconstructed.  
This completes
the proof that $\pi$ can be reconstructed from $(i_1, \dots, i_s)$ and $(b_1, \dots, b_s)$ 
up to antipodal equivalence. 

The proof that 
any triple $(i_1, \dots, i_s)$, $(b_1, \dots, b_s)$, and $z$ as above give rise to a unique 
$k$-$D$-noncrossing partition with the stated properties is similar and left to the reader.
\end{proof}

We can visualize elements of $\Park^{NC}_W(k)$ on $\AAA^2$,
but this visualization is less canonical than those in types ABC.  
\begin{lemma}
\label{d-visualization}
Let $W$ have type D$_n$ for $n \geq 3$ and let $k \geq 1$.  Define a 
$W \times \ZZ_{kh}$-set $M_n(k)$ as follows.  As a set $M_n(k)$ consists
of the pairs $(\pi, f)$ where $\pi$ is a $k$-D-noncrossing partition of $\pm [kn]$ and 
$f: B \mapsto f(B)$ is a labeling of the blocks of $\pi$ with subsets of $\pm [n]$ such that
the following conditions are satisfied.
\begin{enumerate}
\item For any $B \in \pi$ we have $|B| = k |f(B)|$.
\item We have that $\pm [n] = \biguplus_{B \in \pi} f(B)$.
\item For any $B \in \pi$ we have $f(-B) = -f(B)$.
\item Let $S := \{ i, \overline{i} \,:\, 1 \leq i \leq kn, i \equiv 1$ (mod $k$)$\}$.  There
exists a bijection
$\phi: S \rightarrow \pm [n]$ which satisfies $\phi(-i) = - \phi(i)$ for all indices $i$ and 
restricts to a bijection $B \cap S \rightarrow f(B)$ for all blocks $B \in \pi$, we have that
$\phi$ sends an even number of positive indices to negative indices.
\end{enumerate}
The $W \times \ZZ_{kh}$-set structure on $M_n(k)$ is given by letting $W$ act on the labels
and letting $\ZZ_{kh}$ act by clockwise rotation on the outer boundary of $\AAA^2$
and counterclockwise rotation on the inner boundary of $\AAA^2$.

There is a $W \times \ZZ_{kh}$-equivariant bijection $M_n(k) \cong \Park^{NC}_W(k)$.
\end{lemma}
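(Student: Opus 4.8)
The plan is to enrich the bijection $\tau$ of Theorem~\ref{tau-theorem} to the level of parking functions, in direct analogy with the enrichment of $\nabla$ carried out in Section~\ref{Type BC}. Given a $k$-$W$-noncrossing parking function $[w, X_1 \leq \dots \leq X_k]$, write $X_i = V^{w_i}$ for the unique noncrossing group element with that fixed space, so $(w_1 \leq \dots \leq w_k) \in NC^k(W)$, and let $\pi = \tau(w_1 \leq \dots \leq w_k)$ be the associated $k$-$D$-noncrossing partition of $\pm[kn]$. The first ingredient I would record, deriving it from the definition of $\tau$ and the last three bullets of Theorem~\ref{tau-theorem}, is that the restriction of $\pi$ to $S = \{ i, \overline{i} \,:\, i \equiv 1 \pmod k \}$ is, under the sign-equivariant bijection $S \xrightarrow{\sim} \pm[n]$ sending $jk+1$ to $j+1$, precisely the type D noncrossing partition $\pi_1$ of $\pm[n]$ attached to $w_1$: zero block goes to zero block, inner blocks to the singletons $\{n\},\{\overline n\}$, and a size-$ki$ nonzero block to a size-$i$ block. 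Using this, for each block $B$ of $\pi$ let $B'$ be the corresponding block of $\pi_1$ and set $f(B) := w(B')$. Conditions (1)--(3) of the statement are then immediate from Theorem~\ref{tau-theorem} and the fact that $w$ is a signed permutation, and condition (4) holds with the bijection $\phi$ taken to be the composite $S \xrightarrow{\sim} \pm[n] \xrightarrow{w} \pm[n]$, whose even-parity property is inherited from $w \in W$.

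Next I would check that $(\pi, f)$ does not depend on the representative $w$: if $w' = wu$ with $u \in W_{X_1}$, then $u$ fixes $X_1$ pointwise and therefore stabilizes every block of $\pi_1$ setwise (this is the only point at which the combinatorial description of isotropy subgroups in type D enters, including the special handling of the zero block), so $w'(B') = w(u(B')) = w(B')$ for all $B'$. For the inverse assignment I would go the other way: given $(\pi,f) \in M_n(k)$, apply $\tau^{-1}$ to $\pi$ to recover $(w_1 \leq \dots \leq w_k) \in NC^k(W)$, put $X_i = V^{w_i}$, and let $w \in W$ be the element corresponding under $S \xrightarrow{\sim} \pm[n]$ to any admissible $\phi$ from condition (4); the parity clause ensures $w$ lies in the type D group, and any two admissible $\phi$ differ by a bijection stabilizing every block of $\pi_1$, hence by an element of $W_{X_1}$, so $[w, X_1 \leq \dots \leq X_k]$ is well defined. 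The two assignments are visibly mutually inverse.

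Finally I would verify $W \times \ZZ_{kh}$-equivariance. The $W$-part is immediate, since $v.[w, X_1 \leq \dots \leq X_k] = [vw, \dots]$ replaces each label $f(B) = w(B')$ by $v(w(B'))$, i.e.\ permutes labels. For the $\ZZ_{kh}$-part I would use that $\tau$ intertwines the $g$-action on $NC^k(W)$ with the rotation action on $k$-$D$-noncrossing partitions described after Theorem~\ref{tau-theorem}, so it remains to match the labels — and this is the main obstacle, since the action of $g$ on $\Park^{NC}_W(k)$ replaces $w$ by $w u_k c^{-1}$ with $u_k = w_k$ while simultaneously rotating the flags, whereas on $M_n(k)$ the labels simply ride along with the rotation of $\pi$. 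I would handle this by first invoking the identity $g^k.[w, X_1 \leq \dots \leq X_k] = [w c^{-1}, c X_1 \leq \dots \leq c X_k]$ established in Proposition~\ref{well-defined} (which accounts for one full turn of the outer boundary and unwinds the accumulated $c^{-1}$ twists), and then analyzing a single application of $g$ via Lemma~\ref{lemma-first-component}, which identifies the new first flat as $c w_k^{-1} X_1$ and thereby determines exactly how each $B'$ is transported; balancing $w_k^{-1}$ against the combinatorial rotation of the annulus through $\tau$ produces the required cancellation. The bookkeeping is guided by the parallel but more transparent computation in type BC, and Lemma~\ref{recovery} reassures us that no information is lost in passing between $\Park^{NC}_W(k)$ and $M_n(k)$, since a $k$-$D$-noncrossing partition is recovered up to antipode from the openers and block sizes retained by the labeling $f$.
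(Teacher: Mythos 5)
Your construction hinges on a structural claim that fails in type D: you assert that the restriction of $\pi = \tau(w_1 \leq \dots \leq w_k)$ to the set $S$ of positions $\equiv 1 \pmod k$, under the natural sign-equivariant identification $S \simeq \pm[n]$, is exactly the partition $\pi_1$ attached to $w_1$, and you build the ``canonical'' labeling $f(B) := w(B')$, the bijection $\phi$ in Condition 4, the well-definedness argument, and the inverse map all on top of this block correspondence $B \mapsto B'$. This is precisely the point where type D differs from types A and BC: the set partition of $S$ induced by $\pi$ is \emph{not} in general order isomorphic to $\pi_1$, so there is no canonical correspondence between blocks of $\pi$ and blocks of $\pi_1$, and hence no canonical labeling. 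The evidence you cite cannot supply it either: the last three bullets of Theorem~\ref{tau-theorem} only record the presence or absence of zero/inner blocks and the multiset of block sizes, which determines a correspondence at best up to permuting equal-sized blocks and up to the antipodal/sign ambiguities visible in Cases 2 and 3 of the proof of Proposition~\ref{intermediate-type-d} (where two labelings satisfying Conditions 1--3 differ by an inner-block or antipodal swap and only the parity clause in Condition 4 selects one). The actual argument has to work around this: one chooses representatives $[1, \pi_1^{(i)} \leq \dots \leq \pi_k^{(i)}]$ of the $W \times \ZZ_{kh}$-orbits, makes an \emph{arbitrary} choice of admissible labeling $f^{(i)}$ with label-set partition $\pi_1^{(i)}$ for each, and then extends equivariantly after a stabilizer comparison, deducing bijectivity from injectivity of $(\pi_1 \leq \dots \leq \pi_k) \mapsto \pi$ together with transitivity of $W$ on the labelings of a fixed $\pi$.

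A secondary issue: your verification of $\ZZ_{kh}$-equivariance is only a program, not an argument. You correctly identify that the twist $w \mapsto w u_k c^{-1}$ in the action of $g$ on $\Park^{NC}_W(k)$ must cancel against the rotation of the annulus, and that Proposition~\ref{well-defined} and Lemma~\ref{lemma-first-component} are the relevant tools, but ``balancing $w_k^{-1}$ against the combinatorial rotation \dots produces the required cancellation'' is exactly the computation that needs to be done, and in type D it cannot be reduced to the BC bookkeeping because of the missing canonical labeling noted above. The orbit-and-stabilizer route sidesteps this single-step label-transport calculation, which is another reason it is the one that succeeds.
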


\begin{proof}
By 
Theorem~\ref{tau-theorem}, the map $\tau$ induces a bijection between 
multichains $(\pi_1 \leq \dots \leq \pi_k)$ of type D noncrossing partitions of $\pm [n]$
and $k$-D-noncrossing partitions of $\pm [kn]$. 
We can visualize a $k$-$W$-parking function $[w, \pi_1 \leq \dots \leq \pi_k]$ as a 
multichain of type $D$ noncrossing partitions $(\pi_1 \leq \dots \leq \pi_k)$ with the blocks 
of $\pi_1$ labeled via $B \mapsto w(B)$. 

As in the case of types ABC, the idea is to extend the bijection 
$(\pi_1 \leq \dots \leq \pi_k) \mapsto \pi$ to a $W \times \ZZ_{kh}$-equivariant
isomorphism $\Park^{NC}_W(k) \rightarrow M_n(k)$ by using the labels of the block of 
$\pi_1$ to label the blocks of $\pi$.  However, unlike in types ABC, it is {\em not} the case that 
the set partition of $S$ (from Condition 4 of the statement) induced by $\pi$ is order isomorphic
to $\pi_1$.  As a result, there is not a `canonical' choice for labeling the blocks of $\pi$.

Let $\mathcal{O}_1, \dots, \mathcal{O}_t$ be the complete set of $W \times \ZZ_{kh}$-orbits
in $\Park^{NC}_W(k)$ and for $1 \leq i \leq t$ let
$[1, \pi_1^{(i)} \leq \dots \leq \pi_k^{(i)}] \in \Park^{NC}_W(k)$ be a representative of
$\mathcal{O}_i$.  For $1 \leq i \leq t$ let $\pi^{(i)}$ be the $k$-D-noncrossing partition of 
$\pm [kn]$ corresponding to the multichain $\pi_1 \leq \dots \leq \pi_k$.  
By Theorem~\ref{tau-theorem}, the partition $\pi^{(i)}$ has a zero block (respectively inner blocks)
if and only if $\pi^{(i)}_1$ has a zero block (respectively singletons $\{n\}$ and $\{\overline{n}\}$).
Also, the number of blocks of $\pi^{(i)}$ of size $kb$ equals the number of blocks 
of $\pi^{(i)}_1$ of size $b$ for all $b$.  It follows that we can assign the parking function
$[1, \pi_1^{(i)} \leq \dots \leq \pi^{(i)}_k]$ to a pair $(\pi^{(i)}, f^{(i)}) \in M_n(k)$ where
the set of label sets $\{ f^{(i)}(B) \,:\, B \in \pi^{(i)} \}$ is precisely the set partition $\pi^{(i)}_1$
of $\pm [n]$ (there is in general more than one 
function $f^{(i)}$ such that we have
$(\pi^{(i)}, f^{(i)}) \in M_n(k)$ and
$\{ f^{(i)}(B) \,:\, B \in \pi^{(i)} \} = \pi_1^{(i)}$; arbitrarily choose one such
$f^{(i)}$ for each $1 \leq i \leq t$).  

A straightforward analysis of stabilizers
shows that the assignment $[1, \pi^{(i)}_1 \leq \dots \leq \pi^{(i)}_k] \mapsto (\pi^{(i)}, f^{(i)})$
gives a $W \times \ZZ_{kh}$-equivariant embedding $\mathcal{O}_i \hookrightarrow 
M_n(k)$ for all $i$.  These embeddings patch together to give a $W \times \ZZ_{kh}$-equivariant
map $\alpha: \Park^{NC}_W(k) \rightarrow M_n(k)$.  The map $\alpha$ is injective because 
the map $(\pi_1 \leq \dots \leq \pi_k) \mapsto \pi$ induced by Theorem~\ref{tau-theorem}
is injective.  To check that $\alpha$ is surjective, let $(\pi, f) \in M_n(k)$ be arbitrary.  By the 
definition of $\alpha$ and the fact that the map induced by Theorem~\ref{tau-theorem} is
bijective, there exists a function $f'$ such that $(\pi, f') \in M_n(k)$ is in the image of $\alpha$.
But by the definition of $M_n(k)$, there exists $w \in W$ such that 
$w.(\pi, f') = (\pi, f)$.  That is, the pair $(\pi, f)$ is in the image of $\alpha$.  It follows that $\alpha$ gives
an isomorphism as required in the statement.
\end{proof}

We will use the permutation module $M_n(k)$ rather than the permutation module
$\Park^{NC}_W(k)$ to prove the Intermediate Conjecture in type D.

\begin{example}
Let $n = 4$ and $k = 3$.  Three elements of $M_4(3)$ are depicted in Figure~\ref{fig:Dfuss}.

The upper left of Figure~\ref{fig:Dfuss} shows an element $(\pi, f) \in M_4(3)$ where $\pi$ has a 
zero block.  We have that 
$\pi = \{ \pm B, Z \}$, were $B = \{2, 3, 4, 5, 6, 7 \}$ and 
$Z = \{1, \overline{1}, 8, \overline{8}, 9, \overline{9} \}$.  The labeling $f$ of the blocks of $\pi$ is given 
by 
\begin{align*}
f(\pm B) &= \pm \{ \overline{1}, 3 \}, \\
f(Z) &= \{2, 4, \overline{2}, \overline{4} \}.
\end{align*}
It is clear that the pair $(\pi, f)$ satisfies Conditions 1-3 of Lemma~\ref{d-visualization}.  To see that 
$(\pi, f)$ satisfies Condition 5, observe that the set $S$ is given by 
$S = \pm \{1, 4, 7, 10 \}$ and that a choice of bijection $\phi$ as in Condition 4 satisfies 
$\phi(1) = 2, \phi(4) = \overline{1}, \phi(7) = \overline{2},$ and $\phi(10) = 4$, so that $\phi$ sends an
even number of positive indices to negative indices.  It follows that $(\pi, f)$ satisfies Condition 4
and $(\pi, f) \in M_4(3)$.  More generally, it can be shown that whenever $\pi$ has a zero block, any
pair $(\pi, f)$ satisfying Conditions 1-3 of Lemma~\ref{d-visualization} must also satisfy 
Condition 4 since the parity of the number of positive elements sent to negative elements 
by $\phi$ can be swapped by changing the image of $\phi$ on zero block elements.

The upper right of Figure~\ref{fig:Dfuss} shows an element $(\pi', f') \in M_4(3)$ where
$\pi'$ has annular blocks.  We have that $\pi' = \{ \pm B_1, \pm B_2, \pm B_3 \}$, where
$B_1 = \{1, 2, \overline{8}, \overline{9}, 10, \overline{12} \}, 
B_2 = \{3, 4, \overline{11} \},$ and
$B_3 = \{5, 6, 7\}$.  The labeling $f'$ of the blocks of $\pi'$ is given by
\begin{align*}
f'(\pm B_1) &= \pm \{2, 4\}, \\
f'(\pm B_2) &= \pm \{\overline{1}\}, \\
f'(\pm B_3) &= \pm \{\overline{3}\}.
\end{align*}
As before, it is clear that $(\pi', f')$ satisfies Conditions 1-3 of Lemma~\ref{d-visualization}.  To
see that $(\pi', f')$ satisfies Condition 4, observe that there exists a bijection $\phi$ as in 
Condition 4 which satisfies $\phi(1) = 2, \phi(4) = \overline{1}, \phi(7) = \overline{3},$
and $\phi(10) = 4$, so that $\phi$ sends an even number of positive indices to negative indices.  
In general, if $\pi'$ has annular blocks and $(\pi', f') \in M_n(k)$, then any bijection $\phi$
as in Condition 4 sends an even number of positive indices to negative indices.  However, there
exist examples of pairs $(\pi', f')$ where $\pi'$ has annular blocks which satisfy Conditions 1-3
but not Condition 4 (see Case 3 in the Proof of Proposition~\ref{intermediate-type-d}).

The bottom of Figure~\ref{fig:Dfuss} shows an element $(\pi'', f'') \in M_4(3)$ where $\pi''$ has 
inner blocks.  We have that $\pi'' = \{ \pm B_1, \pm B_2, \pm I \}$, where
$B_1 = \{1, 2, 3, 4, \overline{8}, \overline{9} \}, B_2 = \{5, 6, 7\},$ and 
$I = \{ \overline{10}, \overline{11}, 12 \}$.  Observe that the inner blocks $\pm I$ of $\pi''$ satisfy 
Condition 5 in the definition of $k$-D-noncrossing partitions.  The labeling $f''$ of the 
blocks of $\pi''$ is given by
\begin{align*}
f''(\pm B_1) &= \pm \{ \overline{1}, 2 \}, \\
f''(\pm B_2) &= \pm \{ \overline{3} \}, \\
f''(\pm I) &= \pm \{ 4 \}.
\end{align*}
It is again clear that the pair $(\pi'', f'')$ satisfies Conditions 1-3 of Lemma~\ref{d-visualization}.
There exists a bijection $\phi$ as in Condition 4 which satisfies 
$\phi(1) = \overline{1}, \phi(4) = 2, \phi(7) = \overline{3},$ and  $\phi(10) = 4$.   As with
the case of annular blocks, if $\pi''$ has inner blocks and $(\pi'', f'') \in M_n(k)$, then any 
bijection $\phi$ as in Condition 4 sends an even number of positive indices to negative indices.
Also as with the case of annular blocks, there exist pairs $(\pi'', f'')$ where
$\pi''$ has inner blocks which satisfy 
Conditions 1-3 but not Condition 4 (see Case 2 in the Proof of 
Proposition~\ref{intermediate-type-d}).
\end{example}

 \begin{figure}
\includegraphics[scale=0.7]{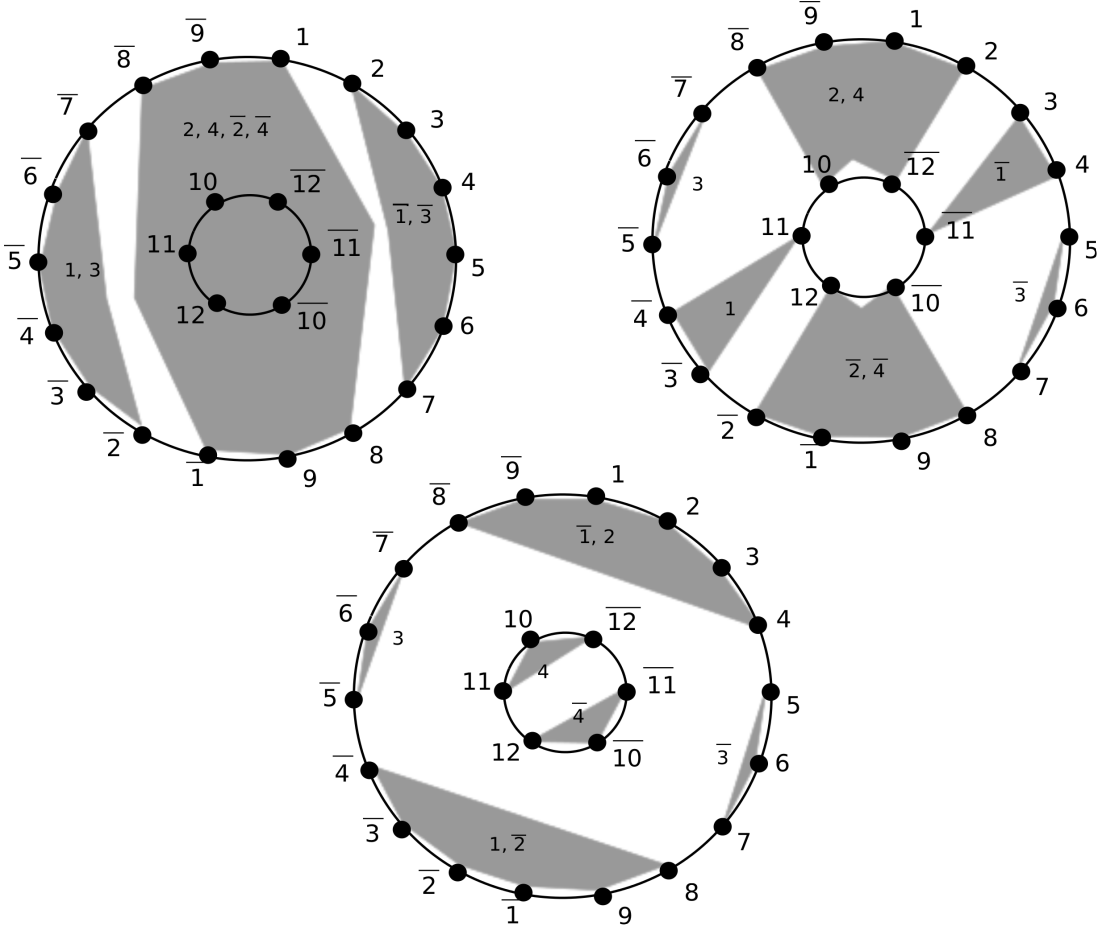}
\caption{Three $k$-$W$-noncrossing parking functions for $W$ of type D$_4$ for $n = 3$}
\label{fig:Dfuss}
\end{figure}

\subsection{The Intermediate Conjecture in type D}

\begin{proposition}
\label{intermediate-type-d}
The Intermediate Conjecture holds when $W$ is of type D$_n$.
\end{proposition}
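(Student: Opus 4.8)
The plan is to run the type BC argument of the previous section through the Krattenthaler--M\"uller--Kim annular model, using Lemma~\ref{d-visualization} in place of the disc model together with the recovery result Lemma~\ref{recovery}.

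First I would fix the hsop. Since $h = 2n-2$, the exponent $kh+1 = 2k(n-1)+1$ is odd, so for $W \subseteq W(B_n)$ acting on $V^* \cong \CC^n$ by signed coordinate permutations the assignment $x_i \mapsto \theta_i := x_i^{kh+1}$ is $W$-equivariant: if $w.x_i = \pm x_j$ then $w.\theta_i = (\pm x_j)^{kh+1} = \pm\theta_j$. The $\theta_i$ are homogeneous of degree $kh+1$ with common zero locus the origin, so they form an hsop carrying $V^*$. The variety cut out by $(\Theta-\xx)$ is then
\begin{equation*}
V^{\Theta} = \{(v_1,\dots,v_n) \in \CC^n \,:\, \text{$v_i = 0$ or $v_i^{kh} = 1$ for all $i$}\},
\end{equation*}
which has $(kh+1)^n$ points, each of multiplicity one because the defining equations are decoupled and $x_i^{kh+1}-x_i$ is separable (equivalently the Jacobian of $(\theta_i-x_i)_i$ is nonsingular at every point of $V^{\Theta}$). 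With Proposition~\ref{deformation} this yields $V^{\Theta} \cong_{W\times\ZZ_{kh}} \Park^{alg}_W(k)$ and establishes every assertion of the Intermediate Conjecture except the existence of a $W\times\ZZ_{kh}$-equivariant bijection $V^{\Theta}\xrightarrow{\sim}\Park^{NC}_W(k)$.

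For that bijection I would pass to $M_n(k)$ via Lemma~\ref{d-visualization} and define $\Phi: M_n(k)\to V^{\Theta}$ by reading coordinates off block openers, exactly as in type BC. Index the $kh = 2k(n-1)$ outer vertices of $\AAA^2$ by $1,\dots,k(n-1),\overline{1},\dots,\overline{k(n-1)}$ and identify the vertex $\ell$ with $\omega^{\ell}$ and the vertex $\overline{\ell}$ with $-\omega^{\ell} = \omega^{\ell + k(n-1)}$, exhausting the $kh$-th roots of unity. For $(\pi,f)\in M_n(k)$ and $m\in[n]$ set $\Phi(\pi,f)_m = 0$ if $\pm m$ lies in the label of the zero block or of an inner block of $\pi$, and $\Phi(\pi,f)_m = \omega^{\ell}$ (resp.\ $-\omega^{\ell}$) if $m$ (resp.\ $-m$) lies in the label of the outer or annular block opened by the outer vertex $\ell$ (resp.\ $\overline{\ell}$). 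The sign bookkeeping and the verifications that $\Phi$ intertwines the $W$-action (label permutation versus signed coordinate permutation) and the $\ZZ_{kh}$-action (one clockwise step of rotation carries an outer opener $\ell$ to $\ell+1$, hence scales the corresponding coordinate by $\omega$, while coordinates coming from inner or zero blocks equal $0$ and are fixed) are the same as in type BC.

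The content is showing $\Phi$ is bijective by building its inverse, which I would organize into the three cases of the trichotomy for $k$-D-noncrossing partitions. Given $v\in V^{\Theta}$, the integer $\#\{m : v_m = 0\}$ selects the case: it is $0$ for an all-outer/annular picture, exactly $1$ when $\pi$ has inner blocks, and $\geq 2$ when $\pi$ has a zero block --- the dichotomy $1$ versus $\geq 2$ works precisely because a zero block of a $k$-D-noncrossing partition must contain all $2k$ inner vertices together with at least $2k$ outer vertices, forcing $|f(Z)|\geq 4$. In every case the set of openers $\{\pm\ell : v_m = \pm\omega^{\ell}\text{ for some }m\}$, the block sizes $b_{\ell} = k\cdot\#\{m : v_m = \pm\omega^{\ell}\}$, and the putative zero-block size $z = 2k\cdot\#\{m : v_m = 0\}$ meet the numerical hypotheses of Lemma~\ref{recovery}, which therefore reconstructs a $k$-D-noncrossing partition $\pi$ uniquely up to antipode; the labels of $\pi$ are then forced by reading the signs of the coordinates of $v$ as in the definition of $\Phi$. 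I expect the main obstacle to be resolving this antipodal ambiguity so that the reconstructed labelled partition actually lies in $M_n(k)$, i.e.\ obeys Condition 4 of Lemma~\ref{d-visualization}. When $\pi$ has a zero block (Case 1) there is no ambiguity: any labelling satisfying Conditions 1--3 automatically satisfies Condition 4, since the parity of the number of positive indices sent to negative indices by the auxiliary bijection $\phi$ can be flipped on the zero-block portion of $S$. When $\pi$ has inner blocks (Case 2) or annular blocks (Case 3), $A(\pi)$ is a genuinely different candidate and one must check that exactly one of $\pi,A(\pi)$ admits a labelling obeying Condition 4 --- this is the type D parity constraint absent in type BC, and it is what makes $\Psi(v)$ well defined. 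Granting these case checks, $\Psi := \Phi^{-1}$ is a $W\times\ZZ_{kh}$-equivariant bijection $V^{\Theta}\to M_n(k)\cong\Park^{NC}_W(k)$, which completes the proof.
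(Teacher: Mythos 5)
Your proposal is correct and takes essentially the same route as the paper: the same hsop $\theta_i = x_i^{kh+1}$ with the same decoupled description of $V^{\Theta}$, the same reduction to the annular model $M_n(k)$ via Lemma~\ref{d-visualization}, the same opener-reading map, and the same three-case construction of the inverse from Lemma~\ref{recovery} with Condition 4 of Lemma~\ref{d-visualization} resolving the residual ambiguity, with the remaining verifications deferred at about the same level as in the paper. One small correction: when $\pi$ has inner blocks one has $A(\pi) = \pi$ (the antipode changes $\pi$ only when annular blocks are present), so the Case 2 ambiguity is between the two labelings of the inner blocks rather than between $\pi$ and $A(\pi)$, which is exactly how the paper phrases that case.
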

\begin{proof}
We identify $\CC[V]$ with $\CC[x_1, \dots, x_n]$.  Then 
$\theta_1 = x_1^{kh+1}, \dots, \theta_n = x_n^{kh+1}$ is an hsop of degree $kh+1$
carrying $V^*$, and the map $x_i \mapsto \theta_i$ is $W$-equivariant.  The zero locus 
$V^{\Theta}$ cut out by $(\Theta - \xx)$ is 
\begin{equation}
V^{\Theta} = \{ (v_1, \dots, v_n) \in \CC^n \,:\, \text{$v_i = 0$ or $v_i^{2(n-1)k} = 1$ for all $i$} \}.
\end{equation}
We have that $|V^{\Theta}| = (2(n-1)k + 1)^n = (kh+1)^n$, and each point in 
$V^{\Theta}$ is cut out by $(\Theta - \xx)$ with multiplicity one.

Let $M_n(k)$ be the $W \times \ZZ_{kh}$-module from Lemma~\ref{d-visualization}.
By Lemma~\ref{d-visualization},
it suffices to construct a $W \times \ZZ_{kh}$-equivariant bijection
$\phi: M_n(k) \rightarrow \ZZ_{kh}$.
Let $(\pi, f) \in M_n(k)$.  We define
 the point $\phi(\pi, f) = (v_1, \dots, v_n)$ with coordinates given by
 \begin{equation*}
 v_i = \begin{cases}
 0 &  \text{if $Z \in \pi$ is a zero block and $i \in f(Z)$} \\
 0 &  \text{if $I \in \pi$ is an inner block and $i \in f(I)$} \\
 \omega^j &  \text{if $B \in \pi$ is an annular  or outer block opened by $j > 0$ and
 $i \in f(B)$} \\
- \omega^j &  \text{if $B \in \pi$ is an annular or outer block opened by
 $j < 0$ and $i \in f(B)$.}
 \end{cases}
 \end{equation*}
 For example, consider the three elements of $M_4(3)$ shown in Figure~\ref{fig:Dfuss}.
 Starting at the top left and reading clockwise, the images of these pairs $(\pi, f)$ under 
 $\phi$ are $(\omega^{10}, 0, \omega^2, 0), 
 (\omega^3, \omega^{17}, \omega^{14}, \omega^{17}),$ and
$(\omega^8, \omega^{17}, \omega^{14}, 0)$.
 
 To show that $\phi$ is a bijection, we construct the inverse map 
 $\psi: V^{\Theta} \rightarrow M_n(k)$.  Let $v = (v_1, \dots, v_n) \in V^{\Theta}$.  The 
 construction of $\psi(v) = (\pi, f)$ depends on the number of times zero appears as a coordinate
 of $v$.
 
 \noindent
 {\bf Case 1. Zero appears more than once as a coordinate of $v$.}  
 Let $z > 1$ be the number of times zero appears as a coordinate of $v$.  For 
 $i \in \pm[k(n-1)]$, let $b_i$ be the total number of times either $\omega^i$ or $-\omega^i$
 appears as a coordinate of $v$.  By Lemma~\ref{recovery}, there exists a unique $k$-D-noncrossing
 partition $\pi$ of $\pm [kn]$ which has a zero block of size $kz$ and openers 
 $\{ i \in \pm[k(n-1)] \,:\, b_i > 0 \}$, where the opener $i$ opens a block of size $k b_i$ (since
 $\pi$ has a zero block, we have that $A(\pi) = \pi$).  
 
 We define a labeling $f$ of the blocks of $\pi$ as follows.  For any block $B \in \pi$, define
 $f(B) \subset \pm [n]$ by 
 \begin{equation*}
 f(B) = \begin{cases}
 \{ i, -i \,:\, v_i = 0 \} & \text{if $B = Z$ is the zero block of $\pi$} \\
 \{ i \,:\, v_i = \omega^j \} \uplus \{ -i \,:\, v_i = -\omega^j \} & 
 \text{if $B$ is nonzero and opened by $j > 0$} \\
 \{i \,:\, v_i = -\omega^j \} \uplus \{ -i \,:\, v_i = \omega^j \} & 
 \text{if $B$ is nonzero and opened by $j < 0$.}
 \end{cases}
 \end{equation*}
 
 It follows that the pair $\psi(v) = (\pi, f)$ 
 satisfies Conditions 1-4 of Lemma~\ref{d-visualization} and is 
 in $M_n(k)$. 
 
 As an example, suppose that $n = 4$, $k = 3$, and $v = (0, \omega^2, \omega^{13}, 0)$.
 We have that $z = 2$, $b_2 = b_{\overline{2}} = b_4 = b_{\overline{4}} = 1$, and 
 $b_i = 0$ for all other subscripts $i$.  By Lemma~\ref{recovery}, there exists a unique
 $k$-D-noncrossing partition $\pi$ of $\pm [12]$ which has
 \begin{itemize}
 \item a zero block of size $12$,
 \item blocks of size $3$ opened by $2$ and $\overline{2}$, and
 \item blocks of size $3$ opened by $4$ and $\overline{4}$.
 \end{itemize}
 The partition $\pi$ has blocks $\pi = \{ \pm B_1, \pm B_2, Z \}$ where
 $B_1 = \{2, 3, 7\}, B_2 = \{4, 5, 6\},$ and $Z = \{1, \overline{1}, 8, \overline{8}, 9, \overline{9} \}$.
 We have that $\psi(v) = (\pi, f)$, where
 the labeling $f$ of the blocks of $\pi$ is given by
 \begin{align*}
 f(\pm B_1) &= \pm \{ 2 \}, \\
 f(\pm B_2) &= \pm \{ \overline{3} \}, \\
 f(Z) &= \{1, \overline{1}, 4, \overline{4} \}.
 \end{align*}
 
 \noindent
 {\bf Case 2. Zero appears exactly once as a coordinate of $v$.}
 As in Case 1, for  $i \in \pm[k(n-1)]$, let $b_i$ be the total number of times either 
 $\omega^i$ or $-\omega^i$
 appears as a coordinate of $v$.  By Lemma~\ref{recovery}, there exists a unique
 $k$-D-noncrossing partition $\pi$ consisting entirely of outer and inner blocks whose 
 outer blocks are opened by $\{ i \,:\, b_i > 0 \}$ such that the outer block opened by $i$
 has size $k b_i$ (since $\pi$ has two inner blocks, we have that $A(\pi) = \pi$).
 
  We define two labelings $f$ and $f'$ of the blocks of $\pi$ as follows.  For any outer block
  $B \in \pi$, define
  \begin{equation*}
  f(B) = f'(B) = \begin{cases}
   \{ i \,:\, v_i = \omega^j \} \uplus \{ -i \,:\, v_i = -\omega^j \} & 
 \text{if $B$ is  opened by $j > 0$} \\
 \{i \,:\, v_i = -\omega^j \} \uplus \{ -i \,:\, v_i = \omega^j \} & 
 \text{if $B$ is  opened by $j < 0$.}
  \end{cases}
  \end{equation*}
  Let $I$ and $-I$ be the inner blocks of $\pi$.  Define
  \begin{align*}
  f(\pm I) &= \pm \{ i \,:\, v_i = 0 \} \\
  f'(\pm I) &= \mp \{i \,:\, v_i = 0 \}.
  \end{align*}
  That is, the labelings $f$ and $f'$ are identical except for swapping the singleton labels of the 
  inner blocks.
 Both of the pairs $(\pi, f)$ and $(\pi, f')$ satisfy Conditions 1-3 of Lemma~\ref{d-visualization}.  
 Exactly one of these pairs also satisfies Condition 4, and is therefore in $M_n(k)$.  Without
 loss of generality, assume that $(\pi, f)$ satisfies Condition 4 and set
 $\psi(v) = (\pi, f)$.
 
 As an example, suppose that $n = 4$, $k = 3$, and 
 $v = (\omega^5, \omega^{11}, 0, \omega^2)$.  We have that 
 $b_5 = b_{\overline{5}} = 1$, $b_2 = b_{\overline{2}} = 2$, and $b_i = 0$ for all other 
 subscripts $i$.  By Lemma~\ref{recovery}, there exists a unique $k$-D-noncrossing partition
 $\pi$ of $\pm [12]$ which has
 \begin{itemize}
 \item only inner blocks and outer blocks,
 \item blocks of size $3$ opened by $5$ and $\overline{5}$, and
 \item blocks of size $6$ opened by $2$ and $\overline{2}$.
 \end{itemize}
 The outer blocks of $\pi$ are seen to be $\pm B_1$ and $\pm B_2$ where
 $B_1 = \{5, 6, 7\}$ and $B_2 = \{2, 3, 4, 8, 9, \overline{1} \}$.  Condition 5 in the definition of 
 $k$-D-noncrossing partitions forces the inner blocks of $\pi$ to be $\pm I$, where
 $I = \{10, \overline{11}, \overline{12} \}$.  The two labelings $f$ and $f'$ of the blocks of $\pi$
 are given by
 \begin{align*}
 f(\pm B_1) = f'(\pm B_1) &= \pm \{ 1 \}, \\
 f(\pm B_2) = f'(\pm B_2) &= \pm \{ \overline{2}, 4 \}, \\
 f(\pm I) &= \pm \{ 3 \}, \\
 f'(\pm I) &= \pm \{ \overline{3} \}.
 \end{align*}
 Both of the pairs $(\pi, f)$ and $(\pi, f')$ satisfy Conditions 1-3 of Lemma~\ref{d-visualization}.
 However, when the labeling $f$ is used there exists a bijection $\phi$ as in Condition 4
 which satisfies $\phi(1) = 2, \phi(4) = 4, \phi(7) = 1,$ and $\phi(10) = 3$ 
 (sending an even number of positive indices to negative indices)
 whereas when the labeling
 $f'$ is used there exists a bijection $\phi$ as in Condition 4 which satisfies
 $\phi(1) = 2, \phi(4) = 4, \phi(7) = 1,$ and $\phi(10) = \overline{3}$
 (sending an odd number of positive indices to negative indices).  It follows that 
 $(\pi, f) \in M_4(3)$ and $(\pi, f') \notin M_4(3)$, so that
 $\psi(v) = (\pi, f)$.
 
 \noindent
 {\bf Case 3. Zero does not appear as a coordinate of $v$.}  As in Cases 1 and 2, 
 let $b_i$ be the total number of times either 
 $\omega^i$ or $-\omega^i$
 appears as a coordinate of $v$.  By Lemma~\ref{recovery}, there exist exactly two
 $k$-D-noncrossing partitions $\pi$ and $\pi' = A(\pi)$ of $\pm [kn]$ which contain only 
 outer and annular blocks such that $i$ opens a block of size $k b_i$ for all 
 $i \in \pm [k(n-1)]$.
 
 We define a labeling $f$ of the blocks of $\pi$ and a labeling $f'$ of the blocks of $\pi'$ as follows.
\begin{align*}
 f(B) &=
 \begin{cases}
   \{ i \,:\, v_i = \omega^j \} \uplus \{ -i \,:\, v_i = -\omega^j \} & 
 \text{if $B \in \pi$ is  opened by $j > 0$} \\
 \{i \,:\, v_i = -\omega^j \} \uplus \{ -i \,:\, v_i = \omega^j \} & 
 \text{if $B \in \pi$ is  opened by $j < 0$.}
 \end{cases} \\
 f'(B) &= 
 \begin{cases}
   \{ i \,:\, v_i = \omega^j \} \uplus \{ -i \,:\, v_i = -\omega^j \} & 
 \text{if $B \in \pi'$ is  opened by $j > 0$} \\
 \{i \,:\, v_i = -\omega^j \} \uplus \{ -i \,:\, v_i = \omega^j \} & 
 \text{if $B \in \pi'$ is  opened by $j < 0$.}
 \end{cases}
 \end{align*}
 Both of the pairs $(\pi, f)$ and $(\pi', f')$ satisfy Conditions 1-3 of Lemma~\ref{d-visualization}.
 Exactly one of these pairs also satisfies Condition 4, and this therefore an element of
 $M_n(k)$.  Without loss of generality, assume that $(\pi, f)$ also satisfies Condition 4 and 
 set $\psi(v) = (\pi, f)$.
 
 As an example of this final case, let $n = 4$, $k = 3$, and 
 $v = (\omega^3, \omega^4, \omega^1, \omega^{12})$.  We have that
 $b_1 = b_{\overline{1}} = 1, b_4 = b_{\overline{4}} = 1$,  
 $b_3 = b_{\overline{3}} = 2$, and $b_i = 0$ for all other subscripts $i$.  By Lemma~\ref{recovery},
 there exist exactly two $k$-D-noncrossing partitions $\pi$ and $\pi' = A(\pi)$ with
 \begin{itemize}
 \item only outer and annular blocks,
 \item blocks of size $3$ opened by $1$ and $\overline{1}$,
 \item blocks of size $3$ opened by $4$ and $\overline{4}$, and
 \item blocks of size $6$ opened by $3$ and $\overline{3}$.
 \end{itemize}
 We choose $\pi = \{ \pm B_1, \pm B_2, \pm B_3 \}$ where
 $B_1 = \{1, 2, \overline{12} \}$, $B_2 = \{4, 5, 6\}$, and
 $B_3 = \{3, 7, 8, 9, \overline{10}, \overline{11} \}$.  This forces
 $\pi' = A(\pi) = \{ \pm B'_1, \pm B'_2, \pm B'_3 \}$ where
 $B'_1 = \{1, 2, 12\}, B'_2 = \{4, 5, 6\},$ and $B'_3 = \{3, 7, 8, 9, 10, 11\}$.  The labelings $f$ of the blocks
 of $\pi$ and $f'$ of the blocks of $\pi'$ are given by
 \begin{align*}
 f(\pm B_1) = f'(B'_1) &= \pm \{ 3 \}, \\
 f(\pm B_2) = f'(B'_2) &= \pm \{ 2 \}, \\
 f(\pm B_3) = f'(B'_3) &= \pm \{1, \overline{4} \}.
 \end{align*}
 Both of the pairs $(\pi, f)$ and $(\pi', f')$ satisfy Conditions 1-3 of Lemma~\ref{d-visualization}.  
 However, using the pair $(\pi, f)$ there exists a bijection $\phi$ as in Condition 4 
 which satisfies $\phi(1) = 3, \phi(4) = 2, \phi(7) = 1,$ and $\phi(10) = 4$ (sending an even
 number of positive indices to negative indices) whereas using the pair $(\pi', f')$ there 
 exists a bijection $\phi$ as in Condition 4 which satisfies 
 $\phi(1) = 3, \phi(4) = 2, \phi(7) = 1,$ and $\phi(10) = \overline{4}$ (sending an odd number
 of positive indices to negative indices).  We conclude that 
 $(\pi, f) \in M_4(3)$ and $(\pi', f') \notin M_4(3)$ and set $\psi(v) = (\pi, f)$.

 We leave it to the reader to check that $\phi$ and $\psi$ are mutually inverse bijections and
 that they are $W \times \ZZ_{kh}$-equivariant.
\end{proof}

\section{Type A}
\label{Type A}

We prove the Weak Conjecture in type A.  The proof is 
not so different from the $k = 1$ case presented in \cite{ARR} and is a direct counting 
argument.
As in the $k = 1$ case, a proof of the Intermediate Conjecture in type A has 
been hindered by the lack of a sufficiently simple hsop $\Theta$ which would allow an explicit
description of $V^{\Theta}$.

\subsection{The Main Conjecture in type A}  
Let $W = \symm_n$ have type A$_{n-1}$, so that $h = n$ and let $k \geq 1$.
Fix a pair $(w, g^d) \in \symm_n \times \ZZ_{kn}$ 
with $0 \leq d < kn$
and a primitive
$kn^{th}$ root of unity $\omega$.  Also fix the Coxeter element 
$c = (1,2, \dots, n) \in \symm_n$.

Recall that the Weak Conjecture asserts that
the number of equivalence classes in $\Park^{NC}_{\symm_n}(k)$ which 
are fixed by $(w, g^d)$ equals
$(kn+1)^{\mult_w(\omega^d)}$, where $\mult_w(\omega^d)$ is the multiplicity 
of the eigenvalue $\omega^d$ in the action of $w$ on the reflection representation.
The spectrum of $w$ can be determined from its cycle decomposition, so that 
$\mult_w(\omega^d)$ is the number of cycles of $w$, less one, when $d = 0$ and
$\mult_w(\omega^d)$ is the number of cycles of $w$ of length divisible by $m$ when
$d > 0$, where $m := \frac{kn}{gcd(kn,d)} \geq 2$ is the order of $g^d$.  We consider the cases
$d = 0$ and $d > 0$ separately.

When $d = 0$, the number
of fixed points of $(w, 1)$ equals $(kh+1)^{\dim(V^w)}$ because 
of the $\symm_n$-isomorphism between
$\Park^{NC}_{\symm_n}(k)$ and $\Park_n(k)$ given in Section~\ref{Introduction}.

We therefore assume that $d > 0$ and let
\begin{equation*}
r_m(w) := | \{ \text{cycles of $w$ of length divisible by $m$} \} |.
\end{equation*}
We aim to prove 
\begin{equation}
\label{main-a}
(kn+1)^{r_m(w)} = | \Park^{NC}_W(k) ^{(w, g^d)} |.
\end{equation}
The idea behind proving Equation~\ref{main-a} is to show that both sides count
the following set of objects.  

\begin{defn}
Let $w \in \symm_n$ and $g^d \neq 1$ as above.  Identify $g$ with the permutation
$(1,2, \dots, kn) (0) $ acting on $[kn] \cup \{ 0 \}$.  Call a function
$f: [n] \rightarrow [kn] \cup \{ 0 \}$ {\sf $(w, g^d)$-equivariant} if 
\begin{equation*}
f(w(j)) = g^d f(j)
\end{equation*}
for every $j \in [n]$.
\end{defn}

Proving that $(w, g^d)$-equivariant functions are counted by the right hand side of 
Equation~\ref{main-a} is simple.

\begin{lemma}
\label{prep-1}
The number of $(w, g^d)$-equivariant functions $f: [n] \rightarrow [kn] \cup \{ 0 \}$ equals
$(kn+1)^{r_m(w)}$.
\end{lemma}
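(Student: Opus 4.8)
The plan is to count $(w,g^d)$-equivariant functions directly, by decomposing $[n]$ into the cycles of $w$ and analyzing the freedom of such an $f$ on each cycle separately. First I would note that the relation $f(w(j)) = g^d f(j)$ propagates: on a $w$-cycle $C = \{j_0, w(j_0), \dots, w^{\ell-1}(j_0)\}$ of length $\ell$, once $f(j_0)$ is chosen, every other value is forced by $f(w^i(j_0)) = g^{di}f(j_0)$. So a $(w,g^d)$-equivariant function amounts to a choice, independently for each cycle $C$ of $w$, of a value $f(j_0) \in [kn]\cup\{0\}$ subject to exactly one consistency constraint: running once around the cycle, $w^\ell(j_0) = j_0$ forces $g^{d\ell}f(j_0) = f(j_0)$, i.e. $f(j_0)$ must lie in the fixed-point set of $g^{d\ell}$ acting on $[kn]\cup\{0\}$. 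Since $[n]$ is partitioned into the cycles of $w$ and there is no interaction between them, the total count is $\prod_{C}\bigl|\mathrm{Fix}_{g^{d|C|}}([kn]\cup\{0\})\bigr|$.

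The one substantive point is to evaluate this fixed-point count. Identifying $g$ with $(1,2,\dots,kn)(0)$, the element $0$ is fixed by every power of $g$, while $g^{d\ell}$ fixes some (equivalently, every) element of $[kn]$ if and only if $g^{d\ell}$ acts as the identity on $[kn]$, i.e. $kn \mid d\ell$. Writing $m = kn/\gcd(kn,d)$ for the order of $g^d$, one checks that $kn \mid d\ell \iff m \mid \ell$, using $\gcd\bigl(m,\, d/\gcd(kn,d)\bigr) = 1$. Hence $\bigl|\mathrm{Fix}_{g^{d\ell}}([kn]\cup\{0\})\bigr|$ is $kn+1$ when $m \mid \ell$ and is $1$ (namely $\{0\}$, forcing $f \equiv 0$ on that cycle) otherwise.

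Multiplying over cycles gives $\prod_{C}\bigl|\mathrm{Fix}_{g^{d|C|}}([kn]\cup\{0\})\bigr| = (kn+1)^{|\{C \,:\, m \mid |C|\}|} = (kn+1)^{r_m(w)}$, which is the claim. I do not expect a genuine obstacle here; the only thing to be careful about is the bookkeeping of the equivariance direction (iterating $w$ versus $w^{-1}$) and the exact exponent in the consistency relation around each cycle, after which the argument is routine.
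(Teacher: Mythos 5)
Your proof is correct and follows essentially the same route as the paper: decompose $[n]$ into $w$-cycles, observe that equivariance propagates the value around each cycle, and conclude that cycles of length not divisible by $m$ force the value $0$ while each cycle of length divisible by $m$ contributes a free choice among the $kn+1$ values. You simply make explicit the cyclic consistency condition $g^{d\ell}f(j_0)=f(j_0)$ and the gcd computation showing $kn \mid d\ell \iff m \mid \ell$, which the paper's terser proof leaves implicit.
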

\begin{proof}
We consider how a typical $(w, g^d)$-equivariant function
$f: [n] \rightarrow [kn] \cup \{ 0 \}$ is constructed.  For any $j \in [n]$, the value 
$f(j)$ determines the values of $f(i)$ for any $i$ in the same cycle of $j$ as $w$.  Moreover, if
$j$ belongs to a cycle of $w$ of length not divisible by $m$, we must have that 
$f(j) = 0$.  This leads to $kn+1$ choices for each cycle counted by $r_m(w)$.
\end{proof}

\begin{defn} (\cite[Definition 9.3]{ARR})
Given $w \in \symm_n$ and $m \geq 2$, say that a set partition 
$\pi = \{ A_1, A_2, \dots, \}$ of $[n]$ is $(w, m)$-admissible if
\begin{itemize}
\item $\pi$ is $w$-stable in the sense that $w(\pi) = \{ w(A_1), w(A_2), \dots \} = \pi$, and
\item at most one block $A_{i_0}$ of $\pi$ is itself $w$-stable in the sense that
$w(A_{i_0}) = A_{i_0}$, and
\item all other blocks besides the $w$-stable block $A_{i_0}$ (if present) are permuted
by $w$ in orbits of length $m$.  That is, the sets 
$ A_i, w(A_i), \dots, w^{m-1}(A_i) $ are pairwise distinct, but
$w^m(A_i) = A_i$.
\end{itemize}
\end{defn}

\begin{lemma}
\label{prep-2}
The number of $(w, g^d)$-equivariant functions 
$f: [n] \rightarrow [kn] \cup \{ 0 \}$ is
\begin{equation*}
\sum_{\pi} kn (kn - m) (kn - 2m) \cdots (kn - (r_{\pi} - 1)m ),
\end{equation*}
where the sum is over $(w, m)$-admissible partitions $\pi$ of $[n]$ and
 $r_{\pi}$ is the number of $w$-orbits of blocks of $\pi$ having length $m$.
\end{lemma}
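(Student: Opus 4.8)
The plan is to classify the $(w,g^{d})$-equivariant functions $f:[n]\to[kn]\cup\{0\}$ by the partition of $[n]$ into their nonempty fibers, to check that this partition is always $(w,m)$-admissible, and to show that each fixed $(w,m)$-admissible $\pi$ arises from exactly $kn(kn-m)(kn-2m)\cdots(kn-(r_{\pi}-1)m)$ equivariant functions; summing over $\pi$ then yields the formula. Throughout we use the following elementary facts about $g^{d}$, valid because $0<d<kn$ (so $g^{d}\neq 1$): the order of $g^{d}$ is $m\geq 2$, the point $0$ is fixed by $g^{d}$, every $\langle g^{d}\rangle$-orbit contained in $[kn]$ has size exactly $m$, and consequently $0$ is the unique fixed point of $g^{d}$ in $[kn]\cup\{0\}$ while $[kn]$ is a disjoint union of $kn/m$ orbits of size $m$.

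The first step is to show that for any $(w,g^{d})$-equivariant $f$, the partition $\pi_{f}$ of $[n]$ into the nonempty fibers $f^{-1}(v)$ is $(w,m)$-admissible. The equivariance relation $f(w(j))=g^{d}f(j)$ gives $w\bigl(f^{-1}(v)\bigr)=f^{-1}(g^{d}v)$, so $w$ permutes the nonempty fibers and $\pi_{f}$ is $w$-stable. Since distinct nonempty fibers of a function are disjoint (hence unequal), $w\bigl(f^{-1}(v)\bigr)=f^{-1}(v)$ forces $g^{d}v=v$, i.e.\ $v=0$; thus $f^{-1}(0)$ is the only block that can be $w$-stable. For $v\neq 0$, the $w$-orbit of $f^{-1}(v)$ is $\{f^{-1}(v),f^{-1}(g^{d}v),f^{-1}((g^{d})^{2}v),\dots\}$, and because distinct fibers are distinct sets its length equals the length of the $\langle g^{d}\rangle$-orbit of $v$, which is $m$. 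Hence $\pi_{f}$ meets all three requirements in the definition of $(w,m)$-admissibility.

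The second step is the count. Fix a $(w,m)$-admissible partition $\pi$ with (possibly absent) $w$-stable block $A_{i_{0}}$ and with $m$-element $w$-orbits of blocks $\mathcal{B}_{1},\dots,\mathcal{B}_{r_{\pi}}$. An equivariant $f$ with $\pi_{f}=\pi$ must be constant on each block; it must send $A_{i_{0}}$ to a $g^{d}$-fixed value, hence to $0$ (contributing a factor $1$, and no value $0$ is available when $A_{i_{0}}$ is absent); and on each orbit $\mathcal{B}_{i}=\{A,w(A),\dots,w^{m-1}(A)\}$ it is determined by the single value $v:=f|_{A}\in[kn]$ via $f|_{w^{j}(A)}=(g^{d})^{j}v$, which is consistent since $(g^{d})^{m}=\mathrm{id}$. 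For $\pi_{f}$ to be exactly $\pi$ the $m$ values produced on $\mathcal{B}_{i}$ must be distinct (automatic, as they form a $g^{d}$-orbit of size $m$) and different $\mathcal{B}_{i}$ must use different $g^{d}$-orbits of values (otherwise two distinct blocks receive the same value and are merged in $\pi_{f}$). Choosing $f$ therefore amounts to choosing in order a value for a representative of $\mathcal{B}_{1}$ ($kn$ choices), then a value for $\mathcal{B}_{2}$ lying in a $g^{d}$-orbit not yet used ($kn-m$ choices), and so on, for $kn(kn-m)(kn-2m)\cdots(kn-(r_{\pi}-1)m)$ functions in all; conversely each such choice produces a well-defined equivariant $f$ with $\pi_{f}=\pi$. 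Summing over all $(w,m)$-admissible $\pi$ gives the stated formula.

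The only genuinely delicate points are the verification that the non-stable block orbits have length exactly $m$ rather than a proper divisor, and the dual fact that distinct block orbits are forced to use distinct $g^{d}$-orbits of values; both reduce to the observation that distinct fibers of a function are disjoint (so can be equal only if empty), combined with $m\geq 2$. The remainder is bookkeeping entirely parallel to the corresponding count in \cite{ARR}, with $n$ there replaced by $kn$ here.
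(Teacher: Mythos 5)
Your proof is correct and follows essentially the same route as the paper's: classify equivariant functions by the partition of $[n]$ into nonempty fibers, observe that this partition is $(w,m)$-admissible, and count the functions inducing a fixed admissible $\pi$ by choosing, orbit by orbit, a value in $[kn]$ from a fresh $\langle g^d\rangle$-orbit, yielding the product $kn(kn-m)\cdots(kn-(r_\pi-1)m)$. Your write-up is in fact slightly more careful than the paper's (which defers to the $k=1$ argument in [ARR]) on the two delicate points — why non-stable block orbits have length exactly $m$ and why distinct block orbits must use distinct value orbits — but the underlying argument is the same.
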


\begin{proof}
This follows the same reasoning as the proof of \cite[Proposition 9.4]{ARR}.  We associate to
any $(w, g^d)$-equivariant function $f: [n] \rightarrow [kn] \cup \{0\}$ 
a partition $\pi = \{A_1, A_2, \dots \}$ on the
domain $[n]$ by letting the $A_i$ be the nonempty fibers $f^{-1}(j)$ of $f$.  Since
$f$ is $(w, g^d)$-equivariant, $\pi$ is $(w, m)$-admissible.  Moreover, the partition
$\pi$ contains a unique $w$-stable block $f^{-1}(0)$ if and only if 
$f$ takes on the value $0$.

On the other hand, if we fix a $(w, m)$-admissible partition $\pi$ of $[n]$, we can count
the number of $(w, g^d)$-equivariant functions $f$ which induce the partition 
$\pi$.  If $\pi$ has a $w$-stable block $A_{i_0}$, set $f(A_{i_0}) = 0$;  otherwise, 
$f$ does not take on the value $0$.  To determine the value of $f$ on the remaining
elements of $[n]$, choose representative blocks 
$A_1, A_2, \dots, A_{r_{\pi}}$ of $\pi$ from each of the $w$-orbits having length $m$.  After
choosing the value $f(A_1) = j_1$ from the $kn$ choices in $[kn]$, the requirement that
$f$ is $(w, g^d)$-equivariant forces the following values of $f$ (interpreted modulo $kn$):
\begin{align*}
f(w(A_1)) &= j_1 + d, \\
f(w^2(A_1)) &= j_1 + 2d, \\ 
&\vdots \\
f(w^{m-1}(A_1)) &= j_1 + (m-1)d.
\end{align*}
This leaves $kn - m$ choices for $f(A_2) = j_2$, forcing
\begin{align*}
f(w(A_2)) &= j_2 + d, \\
f(w^2(A_2)) &= j_2 + 2d, \\ 
&\vdots \\
f(w^{m-1}(A_2)) &= j_2 + (m-1)d.
\end{align*}
This then leaves $kn - 2m$ choices for $f(A_3) = j_3$, etc.
\end{proof}

Next, we relate the summands in Lemma~\ref{prep-2} to certain fixed points
inside $\Park^{NC}_{\symm_n}(k)^{(w, g^d)}$.  We identify
$k$-$\symm_n$-noncrossing parking functions with pairs $(\pi, f')$ where $\pi$ is a
$k$-divisible noncrossing partition of $[kn]$ and $f'$ assigns every block $B$ of 
$\pi$ to a subset $f'(B)$ of $[n]$ so that $|B| = k |f'(B)|$ and 
$\biguplus_{B \in \pi} f'(B) = [n]$.  Observe that $f'$ naturally determines a partition
$\tau(f') = \{ f'(B) \,:\, B \in \pi \}$
of $[n]$.

\begin{lemma}
\label{prep-3}
Suppose $(\pi, f') \in \Park^{NC}_{\symm_n}(k)$  is fixed by
$(w, g^d)$.  The partition $\tau(f')$ of $[n]$ determined by $f'$ is $(w, m)$-admissible.

Conversely, 
if $\tau$ is a fixed $(w, m)$-admissible partition of $[n]$,
the number of elements of $\Park^{NC}_{\symm_n}(k)$ which 
are fixed by $(w, g^d)$ and
induce the partition
$\tau$ is
\begin{equation*}
kn (kn - m) (kn - 2m) \cdots (kn - (r_{\tau} - 1)m),
\end{equation*}
where $r_{\tau}$ is as in Lemma~\ref{prep-2}.
\end{lemma}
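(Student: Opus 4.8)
The argument has two halves, and the plan is to treat them in the order they appear in the statement, modeling the whole thing on the $k=1$ case of \cite{ARR}. Throughout I work with the $(\pi,f')$-model of $\Park^{NC}_{\symm_n}(k)$: since each label $f'(B)$ is nonempty and $\biguplus_{B\in\pi}f'(B)=[n]$, the map $f'$ is \emph{injective} on the blocks of $\pi$, and unwinding the $W\times\ZZ_{kh}$-action shows that $(\pi,f')$ is fixed by $(w,g^d)$ precisely when $g^d\cdot\pi=\pi$ as a partition of $[kn]$ on $\DD^2$ and $f'(g^d\cdot B)=w\bigl(f'(B)\bigr)$ for every block $B$.

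\emph{First half (forward direction).} Applying $w$ to all labels in the identity $f'(g^d\cdot B)=w(f'(B))$ gives $w(\tau(f'))=\tau(f')$, so $\tau(f')$ is $w$-stable. If a block $A=f'(B)$ of $\tau(f')$ is itself $w$-stable, then $f'(g^d\cdot B)=f'(B)$ and injectivity of $f'$ forces $g^d\cdot B=B$; but a noncrossing partition of $[kn]$ invariant under a nontrivial rotation of $\DD^2$ has at most one rotation-invariant block (two such blocks would have convex hulls both containing the centre, hence intersecting), and in fact any block with nontrivial $\langle g^d\rangle$-stabiliser must \emph{be} that unique invariant block. Hence $\tau(f')$ has at most one $w$-stable block, and for every other block $A=f'(B)$ the $\langle g^d\rangle$-orbit of $B$ in $\pi$ is free of size $m$, so by injectivity of $f'$ the $w$-orbit of $A$ also has size exactly $m$. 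Thus $\tau(f')$ is $(w,m)$-admissible. This half is short and I expect no difficulty in it.

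\emph{Second half (the count).} Here the plan is to fix a $(w,m)$-admissible partition $\tau$ and to set up a bijection
\[
\{\,(\pi,f')\in\Park^{NC}_{\symm_n}(k)^{(w,g^d)} : \tau(f')=\tau\,\}
\;\longleftrightarrow\;
\{\,F:[n]\to[kn]\cup\{0\}\ \text{equivariant with fibres the blocks of}\ \tau\,\},
\]
after which the desired product $kn(kn-m)(kn-2m)\cdots(kn-(r_\tau-1)m)$ is exactly the number of such $F$, read off from the count carried out \emph{inside the proof of} Lemma~\ref{prep-2} as the summand attached to $\tau$. To build the bijection I would choose, once and for all, a representative block $A_O$ in each size-$m$ $w$-orbit $O$ of blocks of $\tau$; given a fixed $(\pi,f')$ inducing $\tau$, set $B_O:=(f')^{-1}(A_O)$, $v_O:=\min(B_O)\in[kn]$, and define $F\equiv 0$ on the $w$-stable block (if present), $F\equiv v_O$ on $A_O$, and $F\equiv (g^d)^j(v_O)$ on $w^j(A_O)$. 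Using $(g^d)^m=\mathrm{id}$ on $[kn]$ and the compatibility $f'(g^d\cdot B)=w(f'(B))$, one checks that $F$ is a well-defined equivariant function, and — using that distinct blocks of the $g^d$-invariant partition $\pi$ cannot share an element — that the fibres of $F$ are exactly the blocks of $\tau$. Conversely, from such an $F$ one wants to recover $\pi$ as the unique $g^d$-invariant $k$-divisible noncrossing partition of $[kn]$ having a $g^d$-invariant block of size $k\,|F^{-1}(0)|$ (when $F$ attains $0$) and, for each orbit $O$, a block of size $k\,|A_O|$ whose minimum is $F(A_O)$; the labelling is then forced by $f'(B_O)=A_O$ and equivariance.

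\emph{Main obstacle.} The entire weight of the second half rests on that last reconstruction step: showing that the data ``$g^d$-invariance, the size of the invariant block, and for each rotation-orbit of blocks the size of the block together with the minimum of a designated member'' pins down a \emph{unique and actually existing} $k$-divisible noncrossing partition of $[kn]$, so that the two maps above are mutually inverse. This is the type~A counterpart of Reiner's recovery of type B noncrossing partitions from their openers and block sizes \cite{Reiner} and of the converse half of Lemma~\ref{recovery}; I expect to prove it by descending to the quotient disc $\DD^2/\langle g^d\rangle$ (equivalently, closing up blocks greedily from the innermost outward while respecting the rotational symmetry), the delicate points being existence as much as uniqueness, and checking that no orbit of blocks and no choice of a block's minimum is obstructed by the already-placed invariant block. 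Everything else — the well-definedness and equivariance of $F$, the fibre computation, and the transfer of the enumeration from Lemma~\ref{prep-2} — is routine.
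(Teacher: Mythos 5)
Your forward direction is fine and is essentially the paper's argument: $g^d\pi=\pi$ forces at most one rotation-invariant block (two would have convex hulls meeting at the centre of $\DD^2$), every other block has a free $\langle g^d\rangle$-orbit of size $m$, and injectivity of $f'$ transports this to $\tau(f')$.

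The converse half, however, has a genuine gap, and it is exactly at the step you flagged: the reconstruction you need is false, so your map $(\pi,f')\mapsto F$ is not a bijection. Take $n=2$, $k=2$ (so $kn=4$), $d=2$ (so $m=2$), $w=(1,2)$, $\tau=\bigl\{\{1\},\{2\}\bigr\}$ with representative $A_O=\{1\}$, so $r_\tau=1$ and the lemma predicts $kn=4$ fixed elements inducing $\tau$. The two fixed parking functions given by $\pi_1=\bigl\{\{1,2\},\{3,4\}\bigr\}$ with $f_1'(\{1,2\})=\{1\}$, $f_1'(\{3,4\})=\{2\}$, and by $\pi_2=\bigl\{\{1,4\},\{2,3\}\bigr\}$ with $f_2'(\{1,4\})=\{1\}$, $f_2'(\{2,3\})=\{2\}$, both have $B_O=(f')^{-1}(\{1\})$ of size $2$ with $\min(B_O)=1$, hence produce the same function $F$ (namely $F(1)=1$, $F(2)=3$); meanwhile the equivariant function with $F(1)=4$, $F(2)=2$ is not attained, since no size-two block of a noncrossing partition of $[4]$ has minimum $4$. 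So the data ``$g^d$-invariant, size of the invariant block, and (minimum, size) of one designated block per rotation-orbit'' does not determine $\pi$: the minima of the \emph{other} blocks in an orbit are not obtained from $v_O$ by adding multiples of $d$, and that is precisely the information your $F$ discards. The failures of injectivity and surjectivity happen to cancel numerically (the count in the lemma is correct), but that cancellation is the entire content of the statement and your proposed bijection cannot deliver it; it is not merely a delicate existence/uniqueness check on the quotient disc.

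The paper proves the count without any bijection to equivariant functions. For a fixed $(w,m)$-admissible $\tau$ of type $\mu=(\mu_1,\dots,\mu_n)$, it invokes Athanasiadis's product formula for the number of $m$-fold symmetric $k$-divisible noncrossing partitions of $[kn]$ of type $\mu$, namely $\frac{(\frac{kn}{m})(\frac{kn}{m}-1)\cdots(\frac{kn}{m}-(r_\tau-1))}{\mu_1!\cdots\mu_n!}$, and then counts the admissible labellings of each such $\pi$: the rotation-orbits of blocks of size $kj$ must be matched to the $w$-orbits of $\tau$-blocks of size $j$ ($\mu_j!$ ways for each $j$), and each matched pair of orbits can be aligned cyclically in $m$ ways, giving $m^{r_\tau}\mu_1!\cdots\mu_n!$; the product telescopes to $kn(kn-m)\cdots(kn-(r_\tau-1)m)$. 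The comparison with equivariant functions is made only at the aggregated level of Lemmas~\ref{prep-1} and \ref{prep-2}, not block-by-block inside Lemma~\ref{prep-3}. If you want to keep a bijective route you would need an encoding that records enough data to pin down $\pi$ (for instance the full multiset of (opener, size) pairs of all blocks), but such an encoding no longer matches the equivariant-function count of Lemma~\ref{prep-2} in any direct way.
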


\begin{proof}
This is similar to the proof of \cite[Proposition 9.5]{ARR}.
To see that $\tau(f')$ is $(w,m)$-admissible,
notice that $g^d$ acts on the $k$-divisible noncrossing partition
$\pi$ of $[kn]$ by the $d^{th}$ power of rotation and $w$ acts by permuting the labels in
$f'$.  
Since $(g^d, w)$ fixes $(\pi, f')$, the noncrossing partition $\pi$ must satisfy $g^d.\pi = \pi$.
This implies that $\pi$ can have at most one block left invariant under $g^d$ (and this block,
if present, has size divisible by $m$), and the other blocks of $\pi$ have $m$ distinct images
under the action of powers of $g^d$.
This means that $\tau(f')$ is $w$-stable, has at most one block which is itself 
$w$-stable, and the other blocks of $\tau(f')$ break up into $m$-element orbits under
the action of $w$.  We conclude that $\tau(f')$ is $(w,m)$-admissible.

The enumerative assertion is a consequence of a product formula of 
Athanasiadis \cite[Theorem 2.3]{Athanasiadis} counting $m$-fold symmetric noncrossing
partitions.  Define the {\sf type} of a $(w, m)$-admissible partition $\tau$ of $[n]$ to be the 
sequence $\mu = (\mu_1, \mu_2, \dots, \mu_{n})$, where 
$\tau$ has $\mu_j$ different $w$-orbits of length $m$ in which the blocks have size $j$.  
Also, define the type of a noncrossing $k$-divisible partition 
$\pi$
of $[kn]$ which has $m$-fold rotational symmetry to be the sequence
$\mu = (\mu_1, \mu_2, \dots, \mu_{n}) $, 
where $\pi$ has $\mu_j$  
length $m$ orbits of blocks under $m$-fold rotation which
have size $kj$.  
When $\tau(\pi, f')$ is associated to $(\pi, f')$ as above, observe that $\pi$ and $\tau$ have the
same type $\mu$.

Fix a $(w, m)$-admissible partition $\tau$ of $[n]$ of type $\mu$.  Athanasiadis proved that the number
of $d$-fold symmetric 
$k$-divisible
noncrossing partitions $\pi$ of $[kn]$ of type $\mu$ is
\begin{equation}
\label{athanasiadis-equation}
\frac{ (\frac{kn}{m}) (\frac{kn}{m} - 1) \cdots (\frac{kn}{m} - (r_{\tau} - 1) )}
{\mu_1! \mu_2! \cdots \mu_{n}!}.
\end{equation}
For each such $\pi$, and for each block size $j$, the labelling $f'$ must send the 
$\mu_j$ different $d$-fold rotation orbits in $\pi$ having blocks of size $kj$ to the $\mu_j$ 
different $w$-orbits of $\tau$ having blocks of size $j$.  There are $\mu_j!$ ways to do this 
matching.  Having picked such a matching, for each of the 
$r_{\tau} = \mu_1 + \mu_2 + \cdots + \mu_{n}$ matched orbit pairs, $f'$ has
$m$ choices for how to align them cyclically.

Hence there are $m^{r_{\tau}} \mu_1! \mu_2! \cdots \mu_{n}!$ ways to choose the image sets
$f'(B_i)$ after making the choice of $\pi$.  Together with Equation~\ref{athanasiadis-equation},
this gives the desired count:
\begin{equation*}
m^{r_{\tau}} \mu_1! \mu_2! \cdots \mu_n! \frac{ (\frac{kn}{m}) (\frac{kn}{m} - 1) \cdots (\frac{kn}{m} - (r_{\tau} - 1) )}
{\mu_1! \mu_2! \cdots \mu_{n}!} = kn (kn - m) (kn - 2m) \cdots (kn - (r_{\tau} - 1)m).
\end{equation*}
\end{proof}

\begin{proposition}
\label{weak-type-a}
The Weak Conjecture holds when $W$ is of type A.
\end{proposition}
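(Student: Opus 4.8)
The plan is to establish the character identity $\chi(w, g^d) = (kn+1)^{\mult_w(\omega^d)}$ for all $(w, g^d) \in \symm_n \times \ZZ_{kn}$ by computing the number of fixed points $|\Park^{NC}_{\symm_n}(k)^{(w, g^d)}|$ and matching it against the claimed value. I would begin by disposing of the case $d = 0$: here $\mult_w(1) = \dim(V^w)$, which in type A equals $r(w) - 1$, and the $\symm_n$-equivariant bijection $\Park^{NC}_{\symm_n}(k) \xrightarrow{\sim} \Park_n(k)$ recalled in Section~\ref{Introduction}, together with the known character formula~(\ref{acharacter}) for $\Park_n(k)$, gives $|\Park^{NC}_{\symm_n}(k)^{(w,1)}| = (kn+1)^{r(w)-1} = (kn+1)^{\dim(V^w)}$, as desired.

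For $d > 0$ the goal is Equation~(\ref{main-a}), and the strategy is to show that both of its sides enumerate the $(w, g^d)$-equivariant functions $f : [n] \to [kn] \cup \{0\}$. The right-hand side $(kn+1)^{r_m(w)}$ is identified with this count in Lemma~\ref{prep-1}, by choosing the common value of $f$ freely on each $w$-cycle of length divisible by $m := kn/\gcd(kn,d)$ and forcing it to $0$ on the remaining cycles. Lemma~\ref{prep-2} re-expresses the same count by first recording the fibre partition $\tau$ of $f$ on the domain --- which is automatically $(w, m)$-admissible --- and then counting, for each such $\tau$, the $kn(kn-m)\cdots(kn-(r_\tau-1)m)$ ways to build a compatible $f$. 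Lemma~\ref{prep-3} supplies the geometric half: any fixed point $(\pi, f') \in \Park^{NC}_{\symm_n}(k)^{(w, g^d)}$ induces a $(w, m)$-admissible partition $\tau(f')$ of $[n]$, and for each fixed $(w,m)$-admissible $\tau$ the number of fixed points inducing it is exactly $kn(kn - m)(kn - 2m)\cdots(kn - (r_\tau - 1)m)$. Summing the identity of Lemma~\ref{prep-3} over all $(w, m)$-admissible partitions $\tau$ of $[n]$ and comparing with Lemmas~\ref{prep-2} and~\ref{prep-1} then yields $|\Park^{NC}_{\symm_n}(k)^{(w, g^d)}| = (kn+1)^{r_m(w)} = (kn+1)^{\mult_w(\omega^d)}$, which is Equation~(\ref{main-a}).

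Having the character formula~(\ref{hsopchar}), the second, equivalent assertion of the Weak Conjecture --- the isomorphism $\Park^{NC}_{\symm_n}(k) \cong_{\symm_n \times \ZZ_{kn}} \Park^{alg}_{\symm_n}(k)$ --- follows at once from Proposition~\ref{deformation}, since $\Park^{alg}_{\symm_n}(k) \cong \CC[V]/(\Theta)$ has character~(\ref{hsopchar}). The main obstacle, were the three preparatory lemmas not already in hand, would be Lemma~\ref{prep-3}: matching the purely combinatorial count of $(w,m)$-admissible partitions with the fixed-point count inside $\Park^{NC}_{\symm_n}(k)$ requires Athanasiadis's enumeration of $m$-fold symmetric $k$-divisible noncrossing partitions of a prescribed type, together with careful bookkeeping of the factor $m^{r_\tau}\mu_1!\cdots\mu_n!$ coming from aligning and matching the label orbits. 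Given those lemmas, however, the proof of the proposition is a direct assembly of Lemmas~\ref{prep-1}, \ref{prep-2}, and~\ref{prep-3}, and no further ideas are needed.
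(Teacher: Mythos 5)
Your proposal is correct and follows the paper's own argument exactly: the $d=0$ case via the $\symm_n$-equivariant bijection with $\Park_n(k)$, and the $d>0$ case by double-counting $(w,g^d)$-equivariant functions through Lemmas~\ref{prep-1}, \ref{prep-2}, and \ref{prep-3}, which is precisely how the paper assembles the proof of Proposition~\ref{weak-type-a}. Nothing is missing.
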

\begin{proof}
Combine Lemmas \ref{prep-1}, \ref{prep-2}, and \ref{prep-3}.
\end{proof}

\section{Nonnesting analogs}
\label{Nonnesting analogs}

We introduce the third parking space of this paper.  The definition of this parking space is 
combinatorial and involves nonnesting partitions rather than noncrossing partitions.
The nonnesting analog only exists in crystallographic type
and involves the more refined data of the root system $\Phi$ rather than the reflection group 
$W(\Phi)$.  Moreover, the resulting module $\Park^{NN}_{\Phi}(k)$ carries only an action of 
$W$ rather than an action of $W \times \ZZ_{kh}$.

\subsection{$\Phi$-nonnesting partitions}  Let $\Phi$ be a crystallographic root system 
with Weyl group $W$, let $\Pi \subset \Phi$ be a choice of simple system, and let 
$\Phi^+ \subset \Phi$ be the corresponding set of positive roots.
We 
equip the set $\Phi^+$ with the partial order defined by $\alpha \leq \beta$ if and only if
$\beta - \alpha$ is a nonnegative linear combination of the simple roots.  Given $\alpha \in \Phi$,
let $H_{\alpha}$ be the hyperplane in $V$ perpendicular to $\alpha$.
The following definition is due to
Postnikov \cite{Postnikov}.

\begin{defn}
\label{nonnesting-partitions}
A {\sf $\Phi$-nonnesting partition} is an antichain $A$ in the positive root poset $\Phi^+$.  The
map from antichains in $\Phi^+$ to $\LLL$ defined by
\begin{equation}
A \mapsto \bigcap_{\alpha \in A} H_{\alpha}
\end{equation}
is injective.  A flat $X \in \LLL$ is called {\sf nonnesting} if it is in the image of this map.
\end{defn}

There is a natural bijection between the set of antichains in any poset $P$ and the set 
of order filters (up-closed subsets) of $P$ obtained by letting an antichain $A$ correspond
to the order filter $\{ x \in P \,:\, \text{$x \geq a$ for some $a \in A$} \}$.  This gives a bijection
between the $\Phi$-nonnesting partitions and the collection of order filters in $\Phi^+$.

\begin{example}
\label{nonnesting-type-A}

\begin{figure}
\includegraphics[scale=0.4]{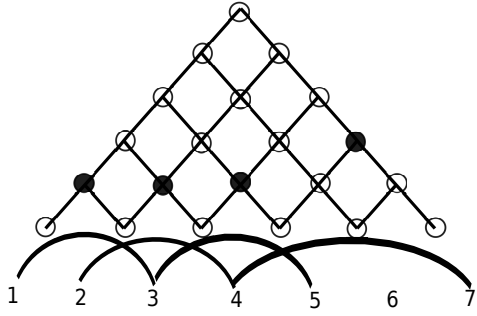}
\caption{A nonnesting partition of type A$_6$}
\label{fig:ANN}
\end{figure}

The term `nonnesting' comes from the case of type A.  When $W$ has type A$_{n-1}$, we identify
$\Phi^+$ with $\{ \alpha_{i,j}  \,:\, 1 \leq i < j \leq n \}$, 
where $\alpha_{i,j} = e_i - e_j$ and $\{ e_1, \dots, e_n \}$ is the standard
basis of $\RR^n$.  The corresponding simple roots are 
$\{ \alpha_{i,i+1} \,:\, 1 \leq i \leq n-1 \}$.  It follows that the partial
order on $\Phi^+$ is given by $\alpha_{i,j} \leq \alpha_{k,l}$ if and only if 
$k \leq i < j \leq l$.  
The upper part of
Figure~\ref{fig:ANN} shows the poset $\Phi^+$ for the case $n = 7$ together with
the antichain 
$\{ \alpha_{1,3}, \alpha_{2,4}, \alpha_{3,5}, \alpha_{4,7} \}$ drawn in black. 

Given an antichain $A$ in $\Phi^+$, we obtain a set partition $\pi$ of $[n]$ by letting $\pi$ be
generated by $i \sim j$ whenever $i < j$ and $\alpha_{i,j} \in A$.  This identifies the collection of 
antichains in $\Phi^+$ with the partitions $\pi$ of $[n]$ whose arc diagrams do not `nest':  
there do not exist indices $1 \leq a < b < c < d \leq n$ such that $a, d$ and $b, c$ are two pairs
of consecutive blockmates in $\pi$ (necessarily belonging to different blocks of $\pi$).  The
lower part of Figure~\ref{fig:ANN} shows the nonnesting partition
$\{ 1, 3, 5 / 2, 4, 7 / 6 \}$ which corresponds to the given antichain.
\end{example}

\subsection{Extended Shi arrangements and geometric multichains}
For the rest of this section, we consider the reflection representation
$V$ over $\RR$.

Given a Fuss parameter $k \geq 1$, the 
{\sf extended Shi arrangement} is the hyperplane arrangement in 
$V$ defined by
\begin{equation}
\Shi^k(\Phi) := \{ H_{\alpha, m} \,:\, \text{$\alpha \in \Phi^+$, $-k < m \leq k$} \}.
\end{equation}
A region of $\Shi^k(\Phi)$ is called {\sf dominant} if it lies in the dominant cone of the
Coxeter arrangement $\Cox(\Phi)$ defined by 
$\langle x, \alpha \rangle > 0$ for all $\alpha \in \Phi^+$.  

Given any dominant region $R$ of $\Shi^k(\Phi)$, let
$\mathcal{F}(R) = (F_1(R) \supseteq F_2(R) \supseteq \dots \supseteq F_k(R))$
be the descending multichain of filters in $\Phi^+$ given by
\begin{equation*}
F_m(R) := \{ \alpha \in \Phi^+ \,:\, \text{ $\langle x, \alpha \rangle > m$ holds on $R$} \}.
\end{equation*}
Clearly a dominant region of $\Shi^k(\Phi)$ is determined by its multichain of filters,
but not every multichain of filters in $\Phi^+$ corresponds to a region of $\Shi^k(\Phi)$.
Athanasiadis \cite{Athanasiadis} made the following key definition to study which 
multichains arise from dominant Shi regions.

\begin{figure}
\includegraphics[scale=0.4]{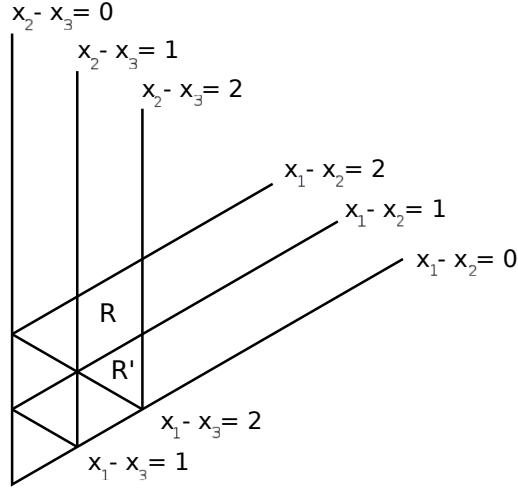}
\caption{The dominant cone of $\Shi^2(\Phi)$ in type A$_2$}
\label{fig:shidom}
\end{figure}

\begin{example}
Figure~\ref{fig:shidom} shown the dominant regions of $\Shi^k(\Phi)$ when $k = 2$ and
$\Phi$ has type A$_2$.  The multichain $\mathcal{F}(R)$ corresponding to the region labeled
$R$ is $(F_1 \supseteq F_2)$, where 
$F_1 = \Phi^+$ and $F_2 =  \{ \alpha_{1,3} \}$.  The multichain 
$\mathcal{F}(R')$ corresponding to the region labeled $R'$ is 
$(F_1' \supseteq F_2')$, where
$F_1' = \{ \alpha_{1,3}, \alpha_{2,3} \}$ and $F_2' = \{ \alpha_{1,3} \}$.
\end{example}

\begin{defn}
Let $\mathcal{F} := (F_1 \supseteq F_2 \supseteq \dots \supseteq F_k)$ 
be a multichain of filters and 
for $1 \leq i \leq k$ let $I_i$ be the order
ideal $I_i := \Phi^+ - F_i$.  The multichain
$\mathcal{F}$ is called
{\sf geometric} if for all indices $1 \leq i, j \leq k$ with $i + j \leq k$ we have
\begin{equation*}
(F_i + F_j) \cap \Phi^+ \subseteq F_{i+j}
\end{equation*}
and
\begin{equation*}
(I_i + I_j) \cap \Phi^+ \subseteq I_{i+j}.
\end{equation*}
\end{defn}

\begin{theorem} (Athanasiadis \cite{Athanasiadis})
\label{athshi}
The map $R \mapsto \mathcal{F}(R)$ given above defines a bijection 
between dominant regions $R$ of $\Shi^k(\Phi)$ and length $k$ geometric
multichains $\mathcal{F}$ of filters in $\Phi^+$.
\end{theorem}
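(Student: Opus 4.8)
The plan is to establish that $R\mapsto\mathcal{F}(R)$ is a bijection by proving three things in turn: that it is well defined and lands in the set of geometric multichains; that it is injective; and that it is surjective. The first two are routine consequences of how regions and the forms $\langle\cdot,\alpha\rangle-m$ interact; essentially all the content lies in surjectivity.

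\emph{Well-definedness and the geometric condition.} Since $\langle x,\alpha\rangle>m+1$ implies $\langle x,\alpha\rangle>m$, the sequence $(F_1(R)\supseteq\cdots\supseteq F_k(R))$ is a descending chain, and each $F_m(R)$ is a filter: if $\alpha\in F_m(R)$ and $\beta\geq\alpha$ in $\Phi^+$ then $\beta-\alpha$ is a nonnegative combination of simple roots, so on the dominant region $R$ (where $\langle x,\alpha_i\rangle>0$ for each simple $\alpha_i$) we get $\langle x,\beta\rangle=\langle x,\alpha\rangle+\langle x,\beta-\alpha\rangle\geq\langle x,\alpha\rangle>m$, whence $\beta\in F_m(R)$. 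For the geometric conditions fix $i,j\geq1$ with $i+j\leq k$. If $\alpha\in F_i(R)$, $\beta\in F_j(R)$ and $\alpha+\beta\in\Phi^+$, then $\langle x,\alpha+\beta\rangle>i+j$ at every point of $R$, so $\alpha+\beta\in F_{i+j}(R)$, giving $(F_i+F_j)\cap\Phi^+\subseteq F_{i+j}$. Dually, since $R$ is a region of $\Shi^k(\Phi)$ and $1\leq i\leq k$, the form $\langle\cdot,\alpha\rangle$ never equals $i$ on $R$; thus $\alpha\notin F_i(R)$ forces $\langle x,\alpha\rangle<i$ throughout $R$, and likewise $\langle x,\beta\rangle<j$, so $\langle x,\alpha+\beta\rangle<i+j$ and $\alpha+\beta\in I_{i+j}(R)$, giving $(I_i+I_j)\cap\Phi^+\subseteq I_{i+j}$.

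\emph{Injectivity.} Inside the open dominant cone the hyperplanes $H_{\alpha,m}$ with $m\leq 0$ separate no points (there $\langle x,\alpha\rangle>0\geq m$), so a dominant region $R$ is determined by the sign of $\langle x,\alpha\rangle-m$ for $\alpha\in\Phi^+$ and $1\leq m\leq k$. As $\langle x,\alpha\rangle>m$ holds on $R$ exactly when $\alpha\in F_m(R)$, this sign data is precisely $\mathcal{F}(R)$; hence $\mathcal{F}(R)=\mathcal{F}(R')$ implies $R=R'$.

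\emph{Surjectivity, and the main obstacle.} Given a geometric multichain $\mathcal{F}=(F_1\supseteq\cdots\supseteq F_k)$, it suffices to produce $x\in V$ with $\langle x,\alpha_i\rangle>0$ for all simple $\alpha_i$, $\langle x,\alpha\rangle>m$ for $\alpha\in F_m$, and $\langle x,\alpha\rangle<m$ for $\alpha\notin F_m$ ($1\leq m\leq k$): any such $x$ lies in the dominant cone and in a region $R$ with $\mathcal{F}(R)=\mathcal{F}$. Writing $f(\alpha):=\#\{m:\alpha\in F_m\}$, this amounts to asking that the $n$-dimensional family of linear functionals $\alpha\mapsto\langle x,\alpha\rangle$ meet the open box $\prod_{\alpha\in\Phi^+}(f(\alpha),f(\alpha)+1)$ (with $(k,\infty)$ when $f(\alpha)=k$). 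The geometric hypotheses translate, via the choices $i=f(\alpha),\,j=f(\beta)$ in the $F$-condition and $i=f(\alpha)+1,\,j=f(\beta)+1$ in the $I$-condition, into the ``almost linearity'' statement $f(\alpha)+f(\beta)\leq f(\alpha+\beta)\leq f(\alpha)+f(\beta)+1$ whenever $\alpha,\beta,\alpha+\beta\in\Phi^+$ (in the relevant range). I would then prove nonemptiness of this polyhedron by invoking a Farkas/Motzkin-type alternative theorem for systems of strict linear inequalities: emptiness would yield a nonzero nonnegative combination of the constraint normals $\pm\alpha,\ \alpha_i$ that vanishes while its right-hand sides sum to a nonnegative number, and the point would be to show the conditions $(F_i+F_j)\cap\Phi^+\subseteq F_{i+j}$, $(I_i+I_j)\cap\Phi^+\subseteq I_{i+j}$ forbid this, by repeatedly using additivity of roots to rewrite the multiplier data until the right-hand-side inequality is contradicted; an alternative is to induct on $k$ (or on the height of roots), using the known bijection between dominant regions of the Shi arrangement $\Shi^1(\Phi)$ and antichains in $\Phi^+$ as the base case. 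I expect this last step — controlling the root-addition bookkeeping needed to rule out an infeasibility certificate, and the fact that the realizing point cannot be built coordinate-by-coordinate naively — to be the technical heart of the whole argument.
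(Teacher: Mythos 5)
Your reduction of the statement to three steps is sensible, and the first two steps are fine: the verification that each $F_m(R)$ is a filter, that the chain is geometric (using that a dominant region meets no $H_{\alpha,m}$ with $1\leq m\leq k$, so $\alpha\notin F_m(R)$ forces $\langle x,\alpha\rangle<m$ on all of $R$), and the injectivity argument (hyperplanes $H_{\alpha,m}$ with $m\leq 0$ separate nothing inside the dominant cone) are all correct, as is your translation of the geometric conditions into the near-additivity $f(\alpha)+f(\beta)\leq f(\alpha+\beta)\leq f(\alpha)+f(\beta)+1$ (suitably capped at $k$). But note that the paper does not prove this theorem at all --- it is quoted from Athanasiadis --- so the only meaningful question is whether your sketch actually constitutes a proof, and it does not: the surjectivity step, which is the entire content of the theorem, is left as a plan. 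You propose either a Farkas/Motzkin infeasibility argument or an induction on $k$, but you execute neither, and you yourself flag this as ``the technical heart.''

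Concretely, the gap is the following. An infeasibility certificate for the system $f(\alpha)<\langle x,\alpha\rangle<f(\alpha)+1$ (with the one-sided constraint when $f(\alpha)=k$) is an arbitrary nonnegative linear dependence $\sum_\alpha\lambda_\alpha\alpha=\sum_\beta\mu_\beta\beta$ among positive roots, together with an inequality on the right-hand sides. The hypotheses you have available only control $f$ along single root additions $\alpha+\beta=\gamma$, so to derive a contradiction you must show that every such dependence can be resolved into two-term root additions in a way compatible with the strict, capped bounds --- this reduction from ``local rank-two'' conditions to global solvability is precisely the nontrivial content of the Shi/Athanasiadis admissibility results, and it is not supplied by saying ``repeatedly use additivity of roots.'' Likewise, the induction-on-$k$ alternative names a base case (itself a nontrivial cited theorem for $\Shi^1(\Phi)$) but no induction step: given a geometric chain $(F_1\supseteq\dots\supseteq F_k)$ and a point realizing $(F_1\supseteq\dots\supseteq F_{k-1})$, it is not explained how to move it into the slab prescribed by $F_k$ while preserving the earlier constraints, and this is where the coupling conditions $(F_i+F_j)\cap\Phi^+\subseteq F_{i+j}$, $(I_i+I_j)\cap\Phi^+\subseteq I_{i+j}$ must actually be used. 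Until one of these two routes is carried out in detail, the proposal proves only that $\mathcal{F}(\cdot)$ is a well-defined injection into the set of geometric multichains, not the bijection asserted by the theorem.
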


\begin{example}
Let $k = 2$ and
suppose $\Phi$ has type A$_2$.  Consider the 
multichain $\mathcal{F} = (F_1 \supseteq F_2)$
of filters in $\Phi^+$ where $F_1 = F_2 = \{ \alpha_{1,3} \}$.  
The multichain $\mathcal{F}$ is not geometric because 
$\alpha_{1,2}, \alpha_{2,3} \in I_1$ but 
$\alpha_{1,2} + \alpha_{2,3} = \alpha_{1,3} \notin I_2$.  Therefore, there does not exist
a dominant region $R$ of $\Shi^2(\Phi)$ such that $\mathcal{F} = \mathcal{F}(R)$.  
Similarly, the multichain $\mathcal{F}' = (F_1 \supseteq F_2)$, where
$F_1 = \Phi^+$ and $F_2 = \emptyset$, 
is not geometric
because $\alpha_{1,2}, \alpha_{2,3} \in F_1$
but $\alpha_{1,2} + \alpha_{2,3} = \alpha_{1,3} \notin F_2$.  Every $2$-element multichain
$(F_1 \supseteq F_2)$ of filters in $\Phi^+$ other than $\mathcal{F}$ and
$\mathcal{F}'$ is geometric.
\end{example}

Given a dominant region $R$ of $\Shi^k(\Phi)$, a {\sf floor} of $R$ is a nonlinear
hyperplane in $\Shi^k(\Phi)$ which is the affine span of a facet of $R$ and separates 
$R$ from the origin.  We define a parabolic subgroup $W_R$ associated to $R$ to be
 the subgroup of $W$ generated by all reflections $t_{\alpha}$ for 
 $\alpha \in \Phi^+$ such that $H_{\alpha, k}$ is a floor of $R$.
 Athanasiadis \cite{Athanasiadis} gave an explicit set of generators of $W_R$ consisting of 
 reflections corresponding to roots in $\Phi^+$ which are `indecomposible with 
 respect of $\mathcal{F}$', where $\mathcal{F}$ is the filter corresponding to $R$.
 
 \begin{example}
 Let $k = 2$ and suppose $\Phi$ has type A$_2$.  Let $R$ and $R'$ be the dominant regions
 of $\Shi^2(\Phi)$ shown in Figure~\ref{fig:shidom}.  The floors of the region $R$ are 
 $x_1 - x_2 = 1$ and $x_2 - x_3 = 1$.  Since $k = 2$, the parabolic subgroup $W_R$
 corresponding to $R$ is the identity subgroup $1$ of $W = \symm_3$.  
 The only floor of the region $R'$ is $x_1 - x_3 = 2$, so that $W_{R'}$ is the parabolic
 subgroup of $\symm_3$ generated by $(1,3)$.
 \end{example}

\subsection{Nonnesting parking functions}  As with the case of noncrossing parking
functions, nonnesting parking functions will be defined by taking putting an equivalence
relation on a certain subset of $W \times \LLL^k$.  

We have a partial order $\preceq$ on filters in $\Phi^+$ given by {\em reverse} inclusion, i.e.,
$F_1 \preceq F_2 \Leftrightarrow F_1 \supseteq F_2$.  Identifying filters with their
minimal antichains yields a partial order $\preceq$ on antichain in $\Phi^+$ and hence 
a partial order $\preceq$ on nonnesting partitions and nonnesting flats.  This partial order on 
nonnesting flats is {\em different} from the partial order induced by $\LLL$. 
For example, in type A$_2$ we have that 
$X_1 \preceq X_2$ where $X_1$ is defined by $x_1 = x_3$ and $X_2$ is defined by 
$x_1 = x_2$, but these flats are are incomparable in $\LLL$.

We call a multichain of flats $\mathcal{X} = (X_1 \preceq \dots \preceq X_k)$ {\sf geometric}
if the corresponding multichain of order filters 
in $\Phi^+$ is geometric.  By Theorem~\ref{athshi}, geometric $k$-multichains of flats
naturally label regions of $\Shi^k(\Phi)$.  For any 
geometric $k$-multichain of flats $\mathcal{X} = (X_1 \preceq \dots \preceq X_k)$, define
$W_{\mathcal{X}} := W_R$, where $R$ is the region  of $\Shi^k(\Phi)$
labeled by $\mathcal{X}$.

\begin{defn}
Let $k \geq 1$ be a Fuss parameter.  A {\sf $k$-$\Phi$-nonnesting parking function} is an 
equivalence class in
\begin{equation*}
\{ (w, X_1 \preceq \dots \preceq X_k) \,:\, w \in W, \text{$X_i$ nonnesting, 
$X_1 \preceq \dots \preceq X_k$ geometric} \} / \sim
\end{equation*}
where $(w, X_1 \preceq \dots \preceq X_k) \sim (w', X_1' \preceq \dots \preceq X_k')$ if and
only if $X_i = X_i'$ for all $i$ and 
$w W_{\mathcal{X}} = w' W_{\mathcal{X}}$, where 
$\mathcal{X} = (X_1 \preceq \dots \preceq X_k)$.  The equivalence class of 
$(w, X_1 \preceq \dots \preceq X_k)$ is denoted 
$[w, X_1 \preceq \dots \preceq X_k]$.  The set of $k$-$\Phi$-nonnesting parking functions
is denoted $\Park^{NN}_{\Phi}(k)$.
\end{defn}

Although we always have an inclusion 
$W_{\mathcal{X}} \subseteq W_{X_k}$, we do not have equality in general.  
In Figure~\ref{fig:shidom}, the multichain $\mathcal{X}$ corresponding to the 
region labeled $R$ has $W_{\mathcal{X}}$ given by the identity subgroup of 
$W = \symm_3$ and $W_{X_k}$ given by the parabolic subgroup of $\symm_3$ generated
by $(1,3)$.

The set $\Park^{NN}_{\Phi}(k)$ is endowed with a $W$-module structure given by
\begin{equation}
w.[v, X_1 \preceq \dots \preceq X_k] := [wv, X_1 \preceq \dots \preceq X_k].
\end{equation}
The $W$-orbits of $\Park^{NN}_{\Phi}(k)$ biject with geometric length $k$ multichains
of nonnesting partitions, and hence with dominant regions of $\Shi^k(\Phi)$.  The 
$W$-stabilizer of the parking function
$[1, \mathcal{X}]$ is $W_{\mathcal{X}}$, so that
\begin{equation}
\label{nonnesting-reformulation}
\Park^{NN}_{\Phi}(k) \cong_W \bigoplus_{\mathcal{X}} {\bf 1}_{W_{\mathcal{X}}}^W \cong_W
\bigoplus_R {\bf 1}_{W_R}^W,
\end{equation}
where the direct sums are over length $k$ geometric multichains $\mathcal{X}$ of 
nonnesting partitions and 
dominant regions $R$ of $\Shi^k(\Phi)$.

\begin{proposition}
\label{nonnesting-character}
There is a $W$-equivariant bijection  
\begin{equation*}
\Park^{NN}_{\Phi}(k) \cong_W Q / (kh+1) Q, 
\end{equation*}
where
$Q = \ZZ[\Phi]$ is the root lattice corresponding to $\Phi$.  In particular, if 
$\chi: W \rightarrow \CC$ is the character of $\Park^{NN}_{\Phi}(k)$, we have
\begin{equation*}
\chi(w) = (kh+1)^{\dim(V^w)}.
\end{equation*}
\end{proposition}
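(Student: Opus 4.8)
The plan is to start from the reformulation (\ref{nonnesting-reformulation}), which already identifies $\Park^{NN}_\Phi(k)$ with the $W$-permutation module $\bigoplus_R \mathbf{1}_{W_R}^W$ on the $W$-set $\coprod_R W/W_R$, the disjoint union ranging over dominant regions $R$ of the extended Shi arrangement $\Shi^k(\Phi)$. Since $Q/(kh+1)Q$ is likewise a $W$-permutation set, it is enough to exhibit a bijection $\beta$ from the dominant regions of $\Shi^k(\Phi)$ to the $W$-orbits on $Q/(kh+1)Q$ such that, for every dominant region $R$, the $W$-stabilizer of any point of $\beta(R)$ is conjugate in $W$ to $W_R$. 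Given such a $\beta$, both $W$-sets split as $\coprod W/(\text{stabilizer})$ over matched orbits, and any family of coset-compatible orbit bijections assembles into the desired $W$-equivariant bijection $\Park^{NN}_\Phi(k) \xrightarrow{\sim} Q/(kh+1)Q$.

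To construct $\beta$ I would use the geometry of $\Shi^k(\Phi)$ together with Athanasiadis's theory. Because $\Cox(\Phi) \subseteq \Shi^k(\Phi)$, every region of $\Shi^k(\Phi)$ lies in a unique Coxeter chamber, and by Shi's parametrization of the regions (for $k=1$) extended to all $k$ via geometric multichains (Theorem~\ref{athshi} and the surrounding analysis), the regions of $\Shi^k(\Phi)$ are indexed by $\coprod_R W/W_R$, with the dominant regions recording the distinguished cosets $W_R$ and with $W$ acting on the coset coordinate; in particular $\sum_R [W:W_R] = (kh+1)^n = |Q/(kh+1)Q|$. I would then recall the classical bijection between the regions of $\Shi^k(\Phi)$ and the points of $Q/(kh+1)Q$ — the case $k=1$ being the bijection used in \cite{ARR} — and check that it carries this combinatorial $W$-action to the natural $W$-action on the torus. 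Restricting to the orbit of distinguished cosets (the dominant regions) yields $\beta$, and the stabilizer condition reduces to matching Athanasiadis's generators of $W_R$ — the reflections $t_\alpha$ with $H_{\alpha,k}$ a floor of $R$, equivalently those attached to roots indecomposable with respect to the geometric multichain $\mathcal{F}(R)$ — with the reflections fixing the corresponding torus point, a local calculation in the affine Weyl group (using that point-stabilizers in $Q/(kh+1)Q$ are parabolic).

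With the $W$-equivariant bijection established, the character formula follows at once: isomorphic $W$-sets yield isomorphic permutation modules, so $\chi$ coincides with the character of $Q/(kh+1)Q$, which is $(kh+1)^{\dim(V^w)}$ by Haiman's formula (\ref{crystcharacter}).

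The heart of the argument — and the step I expect to cost the most — is verifying $W$-equivariance of the region/torus correspondence and the identification of torus point-stabilizers with the floor-parabolics $W_R$; the reduction in the first paragraph and the character deduction in the last are formal. In practice I would expect this to parallel the $k=1$ treatment in \cite{ARR} almost verbatim, with Athanasiadis's geometric-multichain machinery (Theorem~\ref{athshi}) in place of its $k=1$ predecessor and $h+1$ replaced by $kh+1$ throughout.
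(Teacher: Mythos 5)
Your overall strategy --- reduce to matching $W$-orbits and stabilizers between $\bigoplus_R {\bf 1}_{W_R}^W$ and $Q/(kh+1)Q$, then read off the character from Haiman's formula (\ref{crystcharacter}) --- is the same as the paper's, and those two outer steps are fine. The gap is the middle step, which you outsource to a ``classical'' $W$-equivariant bijection between \emph{all} regions of $\Shi^k(\Phi)$ and the full torus, together with an indexing of all regions by $\coprod_R W/W_R$. Neither of these can simply be recalled. The extended Shi arrangement is not $W$-stable (applying $w$ to $H_{\alpha,k}$ can produce $H_{\beta,-k}$, which is not in $\Shi^k(\Phi)$), so there is no geometric $W$-action on its regions; the coset-indexing of all regions, i.e.\ $\sum_R [W:W_R] = (kh+1)^n$, is essentially the $w=1$ instance of the proposition you are proving, so citing it as known is close to circular; and Theorem~\ref{athshi} gives only the bijection between \emph{dominant} regions and geometric multichains of filters --- it provides neither the count nor the action you attribute to it. Likewise, the stabilizer identification you defer to ``a local calculation in the affine Weyl group'' is precisely the nontrivial content: the stabilizer of a torus point is generated by the reflections attached to the walls of the dilated alcove through a representative (these subgroups are not obviously parabolic, and are not literal point-stabilizers in $V$), and matching them with the floor-parabolics $W_R$ is a theorem, not a routine check.

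The paper closes exactly this gap with two orbit-level results, which make any bijection on all regions unnecessary: Haiman's Lemma 7.4.1 identifies the $W$-orbits on $Q/(kh+1)Q$ with the points of $Q \cap (kh+1)A_0$ and shows that the stabilizer of the orbit of $x$ is generated by the reflections corresponding to the walls of $(kh+1)A_0$ containing $x$; Athanasiadis's Theorem 4.2 bijects these points with dominant regions $R$ of $\Shi^k(\Phi)$ and shows that the set of wall-roots at $x$ is $W$-conjugate (up to sign) to the set of roots $\alpha$ with $H_{\alpha,k}$ a floor of $R$, so the stabilizer is conjugate to $W_R$ and ${\bf 1}_{W_R}^W$ is isomorphic to the corresponding orbit representation. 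Summing over orbits and invoking (\ref{nonnesting-reformulation}) gives $Q/(kh+1)Q \cong_W \bigoplus_R {\bf 1}_{W_R}^W \cong_W \Park^{NN}_{\Phi}(k)$, and the character formula follows as you say. To repair your write-up, replace the all-regions bijection and the unproved ``local calculation'' with these two cited results (your own stabilizer-matching step needs them anyway), working only with dominant regions and $W$-orbits.
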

\begin{proof}
This is essentially a reformulation of results of Haiman \cite{Haiman} and 
Athanasiadis \cite{Athanasiadis}.  
Let $A_0 \subset V$ be the closure of the  {\sf fundamental alcove} defined
by the equations $\langle x, \alpha \rangle \geq 0$ for every simple root $\alpha$
and $\langle x, \theta^{\vee} \rangle \leq 1$, where $\theta \in \Phi^+$ is the highest root 
(the unique maximal element in the poset $\Phi^+$) and $\theta^{\vee}$ is the corresponding
coroot.  We have that $A_0$ is the closure of
the unique dominant region of $\Shi^k(\Phi)$ whose closure contains the origin.
For any positive integer $p$, let 
$p A_0$ be the corresponding dilation of $A_0$.  

For any positive integer $p$,
Haiman \cite[Lemma 7.4.1]{Haiman} gave a natural bijective correspondence between 
$W$-orbits in $Q / p Q$ and points in $Q \cap p A_0$ such that the $W$-stabilizer of 
an orbit in $Q / p Q$ corresponding to $x \in Q \cap p A_0$ is the parabolic subgroup of $W$ generated
by the (reflections corresponding to the) walls of $p A_0$ which contain $x$.

Specializing at $p = kh+1$, Athanasiadis \cite[Theorem 4.2]{Athanasiadis} used the affine Weyl
group to give a bijection between points in $Q \cap (kh+1)A_0$ and dominant 
regions of $\Shi^k(\Phi)$.  (Athanasiadis gives a bijection onto dominant regions of the
$k$-extended Catalan arrangement, but this arrangement coincides with $\Shi^k(\Phi)$ in the 
dominant cone.)
Let $x \in Q \cap (kh+1) A_0$ and let $R$ be the corresponding 
dominant region of $\Shi^k(\Phi)$.  Athanasiadis proves that the set of roots corresponding to the 
walls of $(kh+1) A_0$ containing $x$ is $W$-conjugate 
(up to sign)
to the set of roots $\alpha$ corresponding to the 
floors of $R$ of the form  $H_{\alpha, k}$ (where the element $u \in W$ giving rise to this
conjugacy depends on $x$) \cite[Proof of Theorem 4.2]{Athanasiadis}.  Hence, the parabolic 
subgroups of $W$ generated by the reflections in these sets of roots are $W$-conjugate and 
the coset representation ${\bf 1}_{W_R}^W$ of $W$ is isomorphic to ${\bf 1}_{W'}^W$, where
$W'$ is the parabolic subgroup of $W$ generated by reflections in the roots corresponding to 
walls of $(kh+1)A_0$ which contain $x$.

Combining the results of Haiman and Athanasiadis, we conclude that  
the finite torus $Q / (kh+1) Q$ is $W$-isomorphic to 
$\bigoplus_{R} {\bf 1}_{W_R}^W$, where the direct sum is over dominant regions $R$ of 
$\Shi^k(\Phi)$.  This is equivalent to the statement of the proposition by the isomorphisms in
(\ref{nonnesting-reformulation}).
\end{proof}

In contrast to the noncrossing case,
the proof
of Proposition~\ref{nonnesting-character} is uniform.  However, there is no known analog
of the positive root poset or Shi arrangement in noncrystallographic type, and hence no analog
of $\Park^{NN}_{\Phi}(k)$ when $\Phi$ is not crystallographic.  

We can take $W$-invariants in Proposition~\ref{nonnesting-character} to get the following additional
consequence of the Weak Conjecture (again derived uniformly modulo the Weak Conjecture).

\begin{corollary}
\label{nc-nn-corollary}
Suppose that the Weak Conjecture holds for $W = W(\Phi)$ and $W$ is crystallographic.
Then the number of $k$-$W$-noncrossing partitions equals the number of length $k$ geometric
multichains of nonnesting flats.
\end{corollary}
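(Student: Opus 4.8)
The plan is to deduce the corollary from a comparison of the two $W$-permutation modules $\Park^{NC}_W(k)$ and $\Park^{NN}_{\Phi}(k)$ at the level of their $W$-characters, and then to translate the resulting equality of orbit counts through the combinatorial indexings already established in the excerpt. The point is that the number of orbits of a finite $W$-set is a character-theoretic invariant: for a $W$-permutation module $M$ over $\CC$ with permutation character $\chi$, the number of $W$-orbits of the underlying set equals $\dim M^W = \langle \chi, \triv \rangle = \frac{1}{|W|}\sum_{w \in W} \chi(w)$. So two $W$-permutation modules with the same character have the same number of orbits.

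First I would invoke the hypothesis: since the Weak Conjecture holds for $W$, the restriction to $W$ of the character of $\Park^{NC}_W(k)$ is $w \mapsto (kh+1)^{\mult_w(1)} = (kh+1)^{\dim(V^w)}$ by Equation~\ref{hsopchar} with $d = 0$. On the other hand, Proposition~\ref{nonnesting-character} provides a $W$-equivariant bijection $\Park^{NN}_{\Phi}(k) \cong_W Q/(kh+1)Q$ whose character is also $w \mapsto (kh+1)^{\dim(V^w)}$. Hence $\Park^{NC}_W(k)$ and $\Park^{NN}_{\Phi}(k)$ have equal $W$-characters, so by the remark above they have the same number of $W$-orbits.

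It then remains to identify these two orbit counts with the quantities named in the corollary. For $\Park^{NC}_W(k)$: by Definition~\ref{noncrossing-definition} and the subsequent discussion, the $W$-action moves only the first coordinate, so a $W$-orbit is determined by the noncrossing $k$-flat $X_1 \leq \dots \leq X_k$, and noncrossing $k$-flats are in bijection with $NC^k(W)$ via Definition~\ref{definition-fuss-noncrossing-partitions}; thus the number of $W$-orbits of $\Park^{NC}_W(k)$ equals $|NC^k(W)|$, the number of $k$-$W$-noncrossing partitions. For $\Park^{NN}_{\Phi}(k)$: the discussion preceding Equation~\ref{nonnesting-reformulation} shows that its $W$-orbits biject with the length $k$ geometric multichains $\mathcal{X}$ of nonnesting flats. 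Comparing the two counts yields the claim.

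I do not anticipate a genuine obstacle: the corollary is a purely formal consequence of the Weak Conjecture together with the already-proven Proposition~\ref{nonnesting-character} and the orbit descriptions of the two parking spaces. The only care needed is bookkeeping — confirming that the comparison is made at the level of the $W$-invariant (equivalently, trivial-isotypic) subspace so that a character equality suffices, and that "number of $k$-$W$-noncrossing partitions" is read as $|NC^k(W)|$ while "number of length $k$ geometric multichains of nonnesting flats" is read as the $\mathcal{X}$-index set of Equation~\ref{nonnesting-reformulation}.
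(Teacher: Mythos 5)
Your proposal is correct and follows the paper's argument essentially verbatim: the paper likewise combines the Weak Conjecture's character formula with Proposition~\ref{nonnesting-character} and takes $W$-invariants, identifying orbit counts of $\Park^{NC}_W(k)$ and $\Park^{NN}_{\Phi}(k)$ with $|NC^k(W)|$ and the set of length $k$ geometric multichains of nonnesting flats, respectively. No issues to flag.
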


\section{Open problems}
\label{Open problems}

The most obvious open problem is to prove the Weak Conjecture for the exceptional
reflection groups EFH.  As previously mentioned, this is a priori an infinite problem since
the Fuss parameter $k$ could be any positive integer.  
More ambitiously, one could hope for
a type-uniform proof of the strong conjecture in all types.

\begin{problem}
\label{prove-main-conjecture}
Give a uniform proof of the Strong or Intermediate Conjectures.
\end{problem}

In light of Etingof's result \cite[Theorem 12.1]{ARR}, one could assume that the locus
$V^{\Theta}$ is reduced and still obtain a uniform proof of the Intermediate Conjecture.

The nonnesting parking space $\Park^{NN}_{\Phi}(k)$ carries an 
action of $W$, but the author is unaware of a 
natural action of the full cyclic group $W \times \ZZ_{kh}$ which gives
$\Park^{NN}_{\Phi}(k) \cong_{W \times \ZZ_{kh}} \Park^{NC}_W(k)$.  This inability to extend the 
action beyond $W$ in the nonnesting case persists even in the $k = 1$ case \cite{ARR}.  

\begin{problem}
\label{nc-nn-extended-structure}
Define a $W \times \ZZ_{kh}$-set structure on $\Park^{NN}_W(k)$ which 
gives $\Park^{NN}_{\Phi}(k) \cong _{W \times \ZZ_{kh}} \Park^{NC}_W(k)$.
\end{problem}

While this paper has focused exclusively on real reflection groups, there has 
the notions of Coxeter elements, absolute order, and noncrossing partitions make sense
in the broader context of well-generated complex reflection groups.  It is natural to ask 
how much of the theory presented in \cite{ARR} and this paper generalize to the complex setting.

\begin{problem}
Generalize the constructions in \cite{ARR} and in this paper to well-generated complex reflection
groups (or perhaps even to all reflection groups).
\end{problem}

When $W$ is a well-generated complex reflection group acting on $V$, one can define the 
intersection lattice $\LLL$ to be  $\{ V^w \,:\, w \in W \}$ as in the real case.
One also has a well-behaved notion of a Coxeter element and absolute order, and hence a
well-defined notion of a noncrossing flat in $\LLL$.  By taking equivalence classes of pairs
$(w, X_1 \leq \dots \leq X_k)$ with $w \in W$ and $X_i \in \LLL$ noncrossing, one could
define $\Park^{NC}_W(k)$ in the well-generated complex setting as before; this set carries an
action of $W \times \ZZ_{kh}$.  The Weak Conjecture extends to this setting.

Assuming $W$ is well-generated and $\Theta$ is a hsop of degree $kh+1$ carrying $V^*$, we 
still get a $W \times \ZZ_{kh}$-module structure on the quotient 
$\CC[V] / (\Theta)$
by letting $W$ act by linear substitutions and $\ZZ_{kh}$ scale by a primitive $kh^{th}$ root of unity
in degree $1$.  Results of Bessis and Reiner \cite{BessisR} show that 
Equation~\ref{hsopchar} still gives the character $W \times \ZZ_{kh} \rightarrow \CC$ of this module.
Writing $\Theta = (\theta_1, \dots, \theta_n)$, we can take a basis $(x_1, \dots, x_n)$ of $V^*$ such
that the linear map $x_i \mapsto \theta_i$ is $W$-equivariant and define $\Park^{alg}_W(k)$ to
be
$\CC[V] / (\Theta - \xx)$ as in the real case.  The proof of \cite[Proposition 2.11]{ARR} goes through
to show that $\Park^{alg}_W(k) \cong_{W \times \ZZ_{kh}} \CC[V] / (\Theta)$.  One can formulate
the Strong and Intermediate Conjectures in the same way.

Unfortunately, the rational Cherednik algebra theory which {\it uniformly} guarantees the existence 
of hsops $\Theta$ of degree $kh+1$ carrying $V^*$ is at present only available in the context 
of real reflection groups.  Moreover, Etingof's proof that the variety corresponding to 
$\Park^{alg}_W(k)$ is reduced makes use of root theoretic data associated to $W$ which
is unavailable in the complex setting.  However, for $W$ other than $\symm_n$
 contained in the infinite families 
$G(r, 1, n)$ and $G(e, e, n)$, one can obtain ad hoc hsops using powers of
coordinate functions such that the corresponding varieties are obviously reduced.

The definition of the nonnesting parking space $\Park^{NN}_{\Phi}(k)$ makes direct use of the 
crystallographic
root system
$\Phi$.  Since root systems are unavailable in the complex setting, it is entirely unclear how 
to generalize nonnesting parking functions to complex reflection groups.

\section{Acknowledgements} 
The author is grateful to Drew Armstrong, Christian Krattenthaler, 
and Vic Reiner for helpful conversations.

\end{document}